\documentclass{eajam}
%%%%% journal  info %%%%%%%%%
\setcounter{page}{1}
% \renewcommand\thisnumber{x}
% \renewcommand\thisyear {2023}
% \renewcommand\thismonth{xxx}
% \renewcommand\thisvolume{xx}
%%%%%%%%   end  %%%%%%%%%%%%%

\usepackage{charter}
\usepackage[charter]{mathdesign}

%%%%% author macros %%%%%%%%%
% place your own macros HERE
%%%%% end %%%%%%%%%

\begin{document}

\markboth{T.~Du, Z.~Huang and Y.~Li}{DeepONets' Approximation and Generalization for Learning Operators in SPP}
\title{Approximation and Generalization of DeepONets for Learning  Operators Arising from a Class of Singularly Perturbed Problems}

% single author:
% \author[AUTHOR]{AUTHOR\corrauth}
% \address{address of AUTHOR}
% \email{{\tt email address of AUTHOR} (AUTHOR)}
%\author[Only Author]{Only Author\corrauth}
%\address{School of Mathematical Sciences, Beijing International University,
%Beijing 12345, China.}
%\email{{\tt email@address} (Only Author)}

% multiple authors:
% Please mark \corrauth after the name of the corresponding author.
% different addresses:
\author[T.~Du, Z.~Huang and Y.~Li]{Ting Du\affil{1}, Zhongyi Huang\affil{1} and Ye Li\affil{2}\comma\corrauth}
\address{\affilnum{1}\ Tsinghua University, Beijing, China.\\
\affilnum{2}\ Nanjing University of
Aeronautics and Astronautics, Nanjing, China.}
%
%same address:
%\author[F. Author and A.~Co-Author,]{First Author and A.~Co-Author\corrauth}
%\address{address of First Author and His Best Friend}
%
\emails{{\tt dt20@mails.tsinghua.edu.cn} (T. Du), {\tt zhongyih@
mail.tsinghua.edu.cn} (Z. Huang), {\tt yeli20@nuaa.edu.cn} (Y. Li)}
%
%%%%% Begin Abstract %%%%%%%%%%%
\begin{abstract}
Singularly perturbed problems present inherent difficulty due to the presence of a thin boundary layer in its solution. To overcome this difficulty, we propose using deep operator networks (DeepONets), a method previously shown to be effective in approximating nonlinear operators between infinite-dimensional Banach spaces. In this paper, we demonstrate for the first time the application of DeepONets to one-dimensional singularly perturbed problems, achieving promising results that suggest their potential as a robust tool for solving this class of problems. We consider the convergence rate of the approximation error incurred by the operator networks in approximating the solution operator, and examine the generalization gap and empirical risk, all of which are shown to converge uniformly with respect to the perturbation parameter. By utilizing Shishkin mesh points as locations of the loss function, we conduct several numerical experiments that provide further support for the effectiveness of operator networks in capturing the singular boundary layer behavior. 
\end{abstract}
%%%%% end %%%%%%%%%%%

%%%%% Keywords %%%%%%%%%%%
\keywords{deep operator networks, singularly perturbed problems, Shishkin mesh, uniform convergence.}

%%%% AMS subject classifications %%%%
\ams{65M10, 78A48}

%%%% maketitle %%%%%
\maketitle

%%%% Start %%%%%%

\section{Introduction}
Singularly perturbed problems (SPP) are widely employed in diverse areas of applied mathematics, such as fluid dynamics, aerodynamics, meteorology, and modeling of semiconductor devices, among others. These problems incorporate a small parameter $\varepsilon$ which typically appears before the highest order derivative term and reflects medium properties. As $\varepsilon\to0$, the derivative (or higher-order derivative) of the solution to the problem becomes infinite, leading to the emergence of boundary or inner layers. In these regions, the solution (or its derivative) undergoes significant changes, while away from the layers, the behavior of the solution is regular and slow. Due to the presence of thin boundary layers, conventional methods for solving this class of problems exhibit either high computational complexity or significant computational expense. For a comprehensive analysis and numerical treatment of singularly perturbed problems, refer to the books by Miller et al. \cite{miller} and Roos et al. \cite{roos_robust_method_spp}. 

Deep neural networks possess the universal approximation property, which allows them to approximate any continuous or measurable finite-dimensional function with arbitrary accuracy. Consequently, they have become a popular tool for solving partial differential equations (PDEs) by serving as a trial space. PDE solvers based on neural networks include Feynman-Kac formula-based methods \cite{beck2018solving, BSDE2, BSDE} and physics-informed neural networks (PINNs) \cite{pinn}. Moreover, machine learning techniques can be used for operator learning (i.e., mapping one infinite-dimensional
Banach space to another) to approximate the underlying solution operator of an equation. To accomplish this, new network structures have been proposed, such as deep operator networks (DeepONets) \cite{lu_deeponet} and neural operators \cite{neural_operator, fno}.

The broad adoption of machine learning techniques in solving PDEs has introduced novel approaches for handling singularly perturbed problems. However, the spectral bias phenomenon \cite{spectral_bias}, which refers to the difficulty of neural networks in capturing sharp fluctuations of the function, poses a significant challenge for directly applying machine learning techniques to solve such problems. In \cite{BLpinn}, it was observed that the traditional PINNs approach did not produce a satisfactory solution nor capture the singular behavior of the boundary layer. To overcome this problem, a deformation of the traditional PINN based on singular perturbation theory is presented in \cite{BLpinn}.

Operator learning techniques have received little attention in the context of singularly perturbed problems, leaving an important research gap to be filled. Our focus is on DeepOnet, a simple yet powerful model. It originated from the universal approximation theorem proved by Chen et al. \cite{chen_universal}, where the shallow network was later extended to a deep network by Lu et al. \cite{lu_deeponet}, resulting in the proposal of DeepONets. This theorem was further extended to more general operators in \cite{deeponet_error}, thereby providing theoretical assurance of the validity of DeepONet to approximate the solution operator. Specifically, for any error bound $\epsilon$, there exists an operator network that can approximate the operator with an accuracy not exceeding $\epsilon$, while also guaranteeing that the size of the network is at most polynomially increasing concerning $1/\epsilon$. Theoretical and experimental results \cite{net_ad, deeponet_error} support the polynomially increasing property of DeepONets in some cases. DeepONet has demonstrated promising performance in various applications, including fractional derivative operators \cite{lu_deeponet}, stochastic differential equations \cite{lu_deeponet}, and advection-diffusion equations \cite{net_ad}. To extend the applicability of DeepONet to more general situations, several deformations have been proposed, including physics-informed DeepONet \cite{pi_deeponet}, multiple-input DeepONet \cite{mionet}, pre-trained DeepONet for multi-physics systems \cite{mmnet2, mmnet1}, proper orthogonal decomposition-based DeepONet (POD-DeepONet) \cite{pod_net}, and Bayesian DeepONet \cite{Bayesian_DeepONet}.

In this manuscript, we employ DeepONets to approximate the solution operator of the convection-diffusion problem, with particular attention to the case of Dirichlet boundary conditions.  As it is known that for non-homogeneous Dirichlet boundary conditions, the solution $u$ can be obtained by subtracting a linear function satisfying the original boundary condition to obtain a solution $u^*$ satisfying the homogeneous boundary condition, we consider the following one-dimensional singularly perturbed problem:
\begin{equation}
    \left\{ \begin{array}{l}
-\varepsilon u''+pu'+qu=f,\quad x\in(0,1),\\
u(0)=u(1)=0,
\end{array} \right.\label{equ}
\end{equation}
where $p(x)\ge \alpha>0$, $q(x)$ are assumed to be sufficiently differentiable. Define the operator $\mathscr{G}:L^2([0,1])\to L^2([0,1]),\ f\mapsto u$, where $u\in H_0^1$ satisfies
\begin{equation}
\varepsilon\int_0^1u'v'dx+\int_0^1pu'vdx+\int_0^1quvdx=\int_0^1fvdx,\quad \forall\ v\in H_0^1.\label{equal_equ}
\end{equation}

Suppose $\mathscr{G}$ is Lipschitz continuous. We utilize DeepONets to approximate $\mathscr{G}$, which is subject to three sources of error: approximation error, generalization error, and optimization error. We focus our attention on the first two sources of error. For singularly perturbed problems, it is critical to determine whether the error is $\varepsilon$-uniform. We adopt a general framework presented in \cite{deeponet_error} to estimate the approximation error and analyze the convergence rate of this error. Our analysis shows that the approximation error is $\varepsilon$-uniform. Given the finite nature of the dataset, training a DeepONet is constrained by limited data availability. Hence, the selection of data assumes critical importance in determining the performance of the model. Our numerical experiments demonstrate that the performance of the trained model is significantly affected by the choice of locations in the loss function. Additionally, the generalization ability of the model, i.e., its capacity to perform well on data points beyond the training dataset, is of particular interest in our study. We select the Shishkin mesh points \cite{shishkin_mesh} as locations and analyze the generalization gap and empirical risk of our model. The results show that the model exhibits strong generalization capability when applied to boundary layer problems, and we present numerical examples to complement our theoretical findings.

The structure of the paper is organized as follows: In Section \ref{sec_preliminary}, we provide a brief introduction to the DeepONets structure and properties of the operator $\mathscr{G}$. Section \ref{sec_approximation} presents an analysis of the approximation error that arises when the operator network approximates the operator $\mathscr{G}$. In Section \ref{sec_generailzation}, we investigate the generalization gap and empirical risk. The Appendix (Section \ref{sec_appendix}) includes some of the proof procedures. In Section \ref{sec_experiemnt}, we provide numerical demonstrations of the efficacy of the proposed method, as well as a comparative analysis of the model's performance under different location selections. Finally, we summarize the work of this study and also discuss potential future work.

\section{Preliminaries\label{sec_preliminary}}
\subsection{DeepONets}

A neural network is a composition of affine functions $\mathcal{A}_l:\ \mathbb{R}^{d_{l-1}}\to\mathbb{R}^{d_l}$ and activation functions $\sigma:\ \ \mathbb{R}\to\mathbb{R}$. Given an input $\boldsymbol{x}\in\mathbb{R}^{d_0}$, the output of the $l_{th}$ layer neural network is defined as $\boldsymbol{y}_l(\boldsymbol{x})=\sigma\circ\mathcal{A}_l(\boldsymbol{y}_{l-1})=\sigma(\boldsymbol{W}_l \boldsymbol{y_{l-1}}(\boldsymbol{x})+\boldsymbol{b}_l)$, where $\boldsymbol{W}_l$ and $\boldsymbol{b}_l$ are the weight matrix and bias vector of the affine function. The output of the last layer of the neural network is the output of the whole neural network and can be expressed as follows:
\begin{equation}
    \mathscr{N}_{\theta}(\boldsymbol{x})=\boldsymbol{y}_L(\boldsymbol{x})=\mathcal{A}_L\circ\sigma\circ\mathcal{A}_{L-1}\dots\dots\circ\sigma\circ\mathcal{A}_2\circ\sigma\circ\mathcal{A}_1(\boldsymbol{x}). \label{nn}
\end{equation}
where $\theta=\{\boldsymbol{W}_l, \boldsymbol{b}_l\}_{1\le l\le L}$ is the set of learnable parameters obtained by minimizing the loss function during the training of the neural network.
The size of a neural network is commonly quantified by its depth $L$ and width $\mathop {\max }\limits_l d_l$. 

In deep neural networks, the ReLU, Sigmoid, and Tanh activation functions are commonly used. Other activation functions, such as Sine \cite{sitzmann2020implicit}, SReLU \cite{liu2020multi}, and adaptive activation functions \cite{jagtap2020adaptive, qian2018adaptive}, have also been proposed.

A DeepONet comprises two sub-networks in the form of \eqref{nn}: one to capture the input function's information at a fixed number of points 
$x_i$, with $i=1,\dots,m $ (i.e., the branch net), and another to encode the output functions' locations (i.e., the trunk net). In this work, we adopt the form presented in \cite{deeponet_error} for the purpose of theoretical analysis, where DeepONet is decomposed into three operators:
\begin{itemize}
    \item Encoder: For a given constant $m$ and $m$-dimensional vector $\{x_i\}_{i=1}^m$, $x_i\in D$, Encoder $\mathscr{E}$ is defined as: 
    \begin{equation*}
        \mathscr{E}:C(D)\to\mathbb{R}^m,\quad f\mapsto \{f(x_i)\}_{i=1}^m.
    \end{equation*}
    \item Approximator: Approximator $\mathscr{A}$ is a neural network, defined by Eq.~\eqref{nn}, takes a specific form given by
    \begin{equation*}
        \mathscr{A}:\mathbb{R}^m\to\mathbb{R}^p,\quad\{f_j\}_{j=1}^m\mapsto\{\mathscr{A}_k\}_{k=1}^p.
    \end{equation*}
    \item Reconstructor: Trunk net $\boldsymbol{\tau}$ is a neural network of the form \eqref{nn} with $d_0=1$ and $d_L=p+1$, which can be expressed as $\boldsymbol{\tau}(x)=(\tau_0(x),\tau_1(x),\dots,\tau_p(x))$, and then the Reconstructor $\mathscr{R}$ induced by $\boldsymbol{\tau}$ can be defined as 
    \begin{equation*}
        \mathscr{R}=\mathscr{R}_{\boldsymbol{\tau}}:\mathbb{R}^p\to C(U),\quad\{\alpha_k\}_{k=1}^p\mapsto\tau_0(\cdot)+\sum\limits_{k=1}^p\alpha_k\tau_k(\cdot).
    \end{equation*}
\end{itemize}
Then the DeepONet, as proposed in \cite{lu_deeponet}, can be expressed as:
\begin{equation}
\mathscr{N}:C(D)\to C(U),\quad f\mapsto\mathscr{R}\circ\mathscr{A}\circ\mathscr{E}(f)(\cdot). \label{deeponet}
\end{equation}
\begin{figure}
    \centering
    \includegraphics[width=0.7\textwidth]{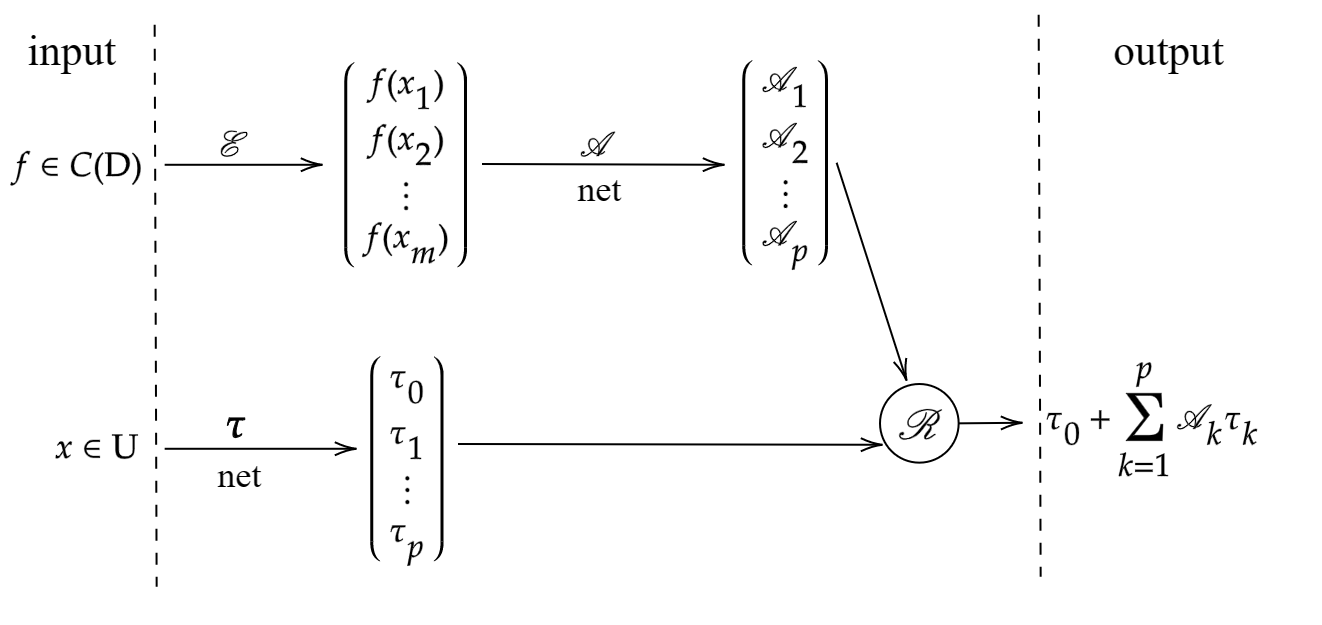}
    \caption{Architecture of DeepONets.}
    \label{architecture_deeponet}
\end{figure}
A diagrammatic representation of the model's structure is shown in Fig.~\ref{architecture_deeponet}.

DeepONet is a neural network model with two inputs: a function $f$ and a location $x$. Given that neural networks require finite-dimensional inputs, $f$ is evaluated at $m$-dimensional location vectors through point-wise evaluation $\mathscr{E}$, resulting in an $m$-dimensional vector that serves as input for the branch net. As a supervised learning model, DeepONet relies on a large dataset of input-output pairs to train the network, a process that is performed offline. Once the network is trained, it can be used as an operator, providing an approximation $\mathscr{N}(f)(x)$ of the equation's solution for any reasonable input function $f$ and location $x$.

\subsection{Prior Estimate}
We introduce two lemmas that describe the fundamental properties of this operator. Lemma \ref{lem_lip} provides a sufficient condition for the Lipschitz continuity of $\mathscr{G}$.
\begin{lemma}
[cf.~Kellogg~et~al.~\cite{kellogg_ssp}] Given $f\in C^{\infty}$ such that 
\begin{equation*}
    |f(x,\varepsilon)|\le K(1+\varepsilon^{-1} \exp(-\alpha(1-x)/\varepsilon)),
\end{equation*}
the solution $\mathscr{G}(f)$ to Eq.~\eqref{equal_equ} satisfies 
\begin{equation*}
    |\mathscr{G}(f)(x)|\le C,
\end{equation*}
where $C$ is a constant that depends only on $K$.
    \label{lem_bounded}
\end{lemma}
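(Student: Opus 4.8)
\emph{Proof idea.} The statement is a pointwise $\varepsilon$-uniform stability bound, and the natural tool is the maximum (comparison) principle together with an explicitly constructed barrier; the whole difficulty is to choose the barrier so that its sup-norm stays bounded as $\varepsilon\to0$. Since $p$, $q$ and $f$ are smooth, $\mathscr{G}(f)$ is a classical ($C^\infty$) solution of \eqref{equ}, so it is enough to argue with classical solutions. First I would reduce to the case $q\ge0$: writing $\mathscr{G}(f)=e^{\lambda x}v$ with a fixed $\lambda>\|q\|_\infty/\alpha$, the function $v$ solves a problem of the same type, with convection coefficient $p-2\varepsilon\lambda\ge\alpha/2$, reaction coefficient $q+\lambda p-\varepsilon\lambda^2$ which is positive for all small $\varepsilon$, and right-hand side $e^{-\lambda x}f$, which still obeys a bound of the stated form; since $\|\mathscr{G}(f)\|_\infty$ and $\|v\|_\infty$ differ only by the fixed factor $e^{\lambda}$, we may assume from now on that $q\ge0$. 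For such coefficients $\mathscr{L}w:=-\varepsilon w''+pw'+qw$ obeys the classical comparison principle: if $\mathscr{L}w\ge0$ on $(0,1)$ and $w(0),w(1)\ge0$, then $w\ge0$ on $[0,1]$.

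Next I would construct a barrier $\Phi=\Phi_1+\Phi_2$ adapted to the two terms in the hypothesis on $f$. For the bounded part take $\Phi_1(x)=(K/\alpha)x$: then $\mathscr{L}\Phi_1=(K/\alpha)(p+qx)\ge K$, while $0\le\Phi_1\le K/\alpha$. For the layer part take
\[
\Phi_2(x)=\frac{C_1}{\varepsilon}(1-x)\,e^{-\alpha(1-x)/\varepsilon}+C_2\,e^{-\alpha(1-x)/\varepsilon}.
\]
Writing $g(x)=e^{-\alpha(1-x)/\varepsilon}$ one computes $\mathscr{L}g=\alpha\varepsilon^{-1}(p-\alpha)g+qg$ and a similar (longer) identity for $\mathscr{L}\big((1-x)g\big)$; choosing $C_1,C_2$ as constants depending only on $K$, $\alpha$, $\|p\|_\infty$ and $\|q\|_\infty$, these can be arranged so that $\mathscr{L}\Phi_2\ge K\varepsilon^{-1}g$ on $(0,1)$. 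The elementary fact that keeps this barrier bounded is that $\varepsilon^{-1}(1-x)e^{-\alpha(1-x)/\varepsilon}=\alpha^{-1}se^{-s}$ with $s=\alpha(1-x)/\varepsilon\ge0$, and $\sup_{s\ge0}se^{-s}=e^{-1}$, so $\|\Phi_2\|_\infty\le C_1/(\alpha e)+C_2$ uniformly in $\varepsilon$; hence $\|\Phi\|_\infty\le C(K)$.

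With $\Phi$ in hand the proof concludes quickly. On $(0,1)$ we have $\mathscr{L}\big(\Phi\pm\mathscr{G}(f)\big)=\mathscr{L}\Phi\pm f\ge K\big(1+\varepsilon^{-1}g\big)-|f|\ge0$, while $(\Phi\pm\mathscr{G}(f))(0)=\Phi(0)\ge0$ and $(\Phi\pm\mathscr{G}(f))(1)=\Phi(1)\ge0$, so the comparison principle gives $\pm\mathscr{G}(f)\le\Phi$, i.e.\ $|\mathscr{G}(f)(x)|\le\Phi(x)\le C(K)$ on $[0,1]$; undoing the substitution of the first step changes $C(K)$ only by the factor $e^{\lambda}$.

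The one genuinely delicate point is the verification of $\mathscr{L}\Phi_2\ge K\varepsilon^{-1}g$ on all of $(0,1)$. The coefficient $\alpha(p-\alpha)/\varepsilon$ in $\mathscr{L}g$ degenerates wherever $p(x)=\alpha$, and near $x=1$ the "helpful" term produced by $\mathscr{L}\big((1-x)g\big)$ is of lower order, so one has to split $[0,1]$ according to whether $p(x)$ is close to or bounded away from $\alpha$ and balance the contributions of the two pieces of $\Phi_2$ accordingly (this is where $C_1$, $C_2$, and possibly a further enlargement of $\Phi_1$, get fixed). Everything else is a routine maximum-principle computation.
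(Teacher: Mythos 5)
The paper itself does not prove this lemma: it is quoted from Kellogg and Tsan \cite{kellogg_ssp}, and your barrier/maximum-principle argument is essentially the argument of that reference, so in substance you have reproduced the intended proof and it is correct. The reduction $u=e^{\lambda x}v$ with $\lambda>\|q\|_\infty/\alpha$ legitimately enforces a positive reaction coefficient for small $\varepsilon$, the comparison principle then applies, and since $\varepsilon^{-1}(1-x)e^{-\alpha(1-x)/\varepsilon}\le(\alpha e)^{-1}$ your barrier is bounded uniformly in $\varepsilon$, which is the whole point. Two remarks. First, the ``delicate point'' you flag can be removed entirely by weakening the decay rate of the barrier: with $g_{1/2}(x)=e^{-\alpha(1-x)/(2\varepsilon)}$ one has $\mathscr{L}g_{1/2}=\bigl(\tfrac{\alpha}{2\varepsilon}(p-\tfrac{\alpha}{2})+q\bigr)g_{1/2}\ge\tfrac{\alpha^2}{4\varepsilon}g_{1/2}\ge\tfrac{\alpha^2}{4\varepsilon}e^{-\alpha(1-x)/\varepsilon}$ once $q\ge0$, so $\Phi_2=\tfrac{4K}{\alpha^2}g_{1/2}$ does the job with no splitting of $[0,1]$ and no $(1-x)$-weighted term; this is the standard Kellogg--Tsan-type comparison function. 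Second, note that after your substitution the convection coefficient is only $p-2\varepsilon\lambda\ge\alpha-2\varepsilon\lambda$, so the sign condition $\tilde p-\alpha\ge0$ that you invoke in the region where $p$ is near $\alpha$ fails by an $O(\varepsilon)$ amount; the resulting defect in $\mathscr{L}\Phi_2$ is only $O(1)$ (again because $(1-x)\varepsilon^{-1}e^{-\alpha(1-x)/\varepsilon}$ is bounded) and is absorbed by enlarging $\Phi_1$, as you anticipate, or it disappears altogether if you use the $\alpha/2$-rate barrier above. With either variant the constant depends on $K$, $\alpha$, $\|p\|_\infty$, $\|q\|_\infty$ but not on $\varepsilon$, which is what the lemma asserts.
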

 \begin{lemma}
  Under the conditions $0<\varepsilon\ll 1$, $ p(x),\ q(x)\in C^{\infty}$, $p(x)\ge \alpha>0$, $q(x)-\frac{p'(x)}{2}\ge\beta>0$, and $f\in L^2([0,1])$, the weak solution $u$ to Eq.~\eqref{equal_equ} satisfies the inequality 
  \begin{equation*}
      \int_0^1 u^2dx\le C\int_0^1 f^2dx,
  \end{equation*}
  where $C$ is a constant independent of $\varepsilon$.
\label{lem_lip}
\end{lemma}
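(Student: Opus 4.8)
The plan is to prove the estimate by the standard energy (coercivity) argument: test the weak formulation \eqref{equal_equ} against $v=u$ itself and then exploit the sign conditions imposed on the coefficients. Since $u\in H_0^1$, the choice $v=u$ is admissible, and it yields the identity
\[
\varepsilon\int_0^1 (u')^2\,dx+\int_0^1 p u' u\,dx+\int_0^1 q u^2\,dx=\int_0^1 f u\,dx.
\]

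The key manipulation is to rewrite the convection term. Because $u\in H_0^1([0,1])$ we have $u^2\in W^{1,1}([0,1])$ with $(u^2)'=2uu'$, and $u$ is continuous with $u(0)=u(1)=0$ by the one-dimensional Sobolev embedding; together with $p\in C^\infty\subset C^1$ this justifies the integration by parts
\[
\int_0^1 p u' u\,dx=\frac{1}{2}\int_0^1 p\,(u^2)'\,dx=\frac{1}{2}\big[p u^2\big]_0^1-\frac{1}{2}\int_0^1 p' u^2\,dx=-\frac{1}{2}\int_0^1 p' u^2\,dx .
\]
Substituting back collapses the two zeroth-order contributions into the single term $\int_0^1\big(q-\frac{p'}{2}\big)u^2\,dx$.

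Next I would invoke the hypotheses. Discarding the nonnegative term $\varepsilon\int_0^1(u')^2\,dx$ and using $q-p'/2\ge\beta>0$ gives
\[
\beta\int_0^1 u^2\,dx\le\int_0^1\Big(q-\frac{p'}{2}\Big)u^2\,dx\le\int_0^1 f u\,dx .
\]
Applying the Cauchy--Schwarz inequality on the right, $\int_0^1 f u\,dx\le\|f\|_{L^2}\|u\|_{L^2}$, yields $\|u\|_{L^2}\le\beta^{-1}\|f\|_{L^2}$, i.e.\ the asserted bound with $C=\beta^{-2}$; crucially this constant depends only on $\beta$ and not on $\varepsilon$, which is exactly the $\varepsilon$-uniformity claimed. (Equivalently, one can use Young's inequality $fu\le\frac{1}{2\beta}f^2+\frac{\beta}{2}u^2$ to absorb the $u^2$ term on the left.)

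There is no deep obstacle here; the only point needing a little care is the rigorous justification of the integration by parts for a merely weak ($H_0^1$) solution and the vanishing of the boundary term, both of which follow from the $H^1\hookrightarrow C$ embedding in one dimension together with the smoothness of $p$. It is also worth remarking that the same computation shows the bilinear form in \eqref{equal_equ} is coercive on $H_0^1$ uniformly in $\varepsilon$, which simultaneously guarantees (via Lax--Milgram) that the weak solution referred to in the statement exists and is unique and is precisely what renders the estimate independent of $\varepsilon$.
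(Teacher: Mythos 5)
Your argument is correct and follows essentially the same route as the paper: test the weak formulation with $v=u$, integrate the convection term by parts so the zeroth-order terms combine into $\int_0^1\bigl(q-\tfrac{p'}{2}\bigr)u^2\,dx$, then use $q-p'/2\ge\beta>0$ and Cauchy--Schwarz to conclude with a constant independent of $\varepsilon$. Your extra care about justifying the integration by parts for $H_0^1$ solutions and the coercivity/Lax--Milgram remark are fine additions but not a different method.
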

\begin{proof}
    To prove the inequality stated in the theorem, we start by setting $v=u$ in the Eq.~\eqref{equal_equ}, which gives us
    \begin{equation*}
        \varepsilon\int_0^1(u')^2dx+\int_0^1\left(q(x)-\frac{p'(x)}{2}\right)u^2dx=\int_0^1fudx.
    \end{equation*}
We can then use the condition $q(x)-\frac{p'(x)}{2}\ge\beta>0$, along with the Cauchy-Schwarz inequality, to obtain the desired inequality.
\qed
\end{proof}

\section{Approximation Error\label{sec_approximation}}
In this section, the approximation error generated by the operator network in approximating the solution operator $\mathscr{G}$ is analyzed quantitatively. Given $\mu$ as a probability measure on $L^2([0,1])$, referring to the method in \cite{deeponet_error}, we use $L^2(\mu)$ norm to measure the error between the operator $\mathscr{N}$ and the operator $\mathscr{G}$, i.e.,
\begin{equation}
    \widehat{\mathscr{E}}=\left(\int_{L^2([0,1])}\|\mathscr{G}(f)-\mathscr{N}(f)\|^2_{L^2([0,1])}d\mu(f)\right)^{1/2}.\label{error}
\end{equation}
\begin{remark}
   Although $\mathscr{E}$ is not well-defined on $L^2(D)$, a Borel measurable extension $\Bar{\mathscr{E}}:L^2(D)\to \mathbb{R}^m$ can always be found such that $\mathscr{E}(f)=\Bar{\mathscr{E}}(f)$ for any $f\in C(D)\cap L^2(D)$. By defining the operator network $\bar{\mathscr{N}}=\mathscr{R}\circ\mathscr{A}\circ\bar{\mathscr{E}}:L^2([0,1])\to L^2([0,1])$, the error formula \eqref{error} can be well-defined. The existence of the operator network $\mathscr{N}$ is guaranteed by the universal approximation theorem, specifically by the version proposed by Lanthaler et al. \cite{deeponet_error}.
\end{remark}

The universal approximation theorem of Chen and Chen \cite{chen_universal} and its generalized version \cite{deeponet_error} both guarantee the existence of a DeepONet of the form \eqref{deeponet} that can approximate a given nonlinear operator $\mathscr{G}$ with an error not exceeding a pre-specified bound $\epsilon$. However, these theorems do not provide specific information on determining the configuration of a DeepONet, such as the size of the branch and trunk networks. Our goal is to better understand the performance of DeepONets in approximating nonlinear operators by conducting a quantitative analysis of their efficiency, specifically investigating how variations in network size and structure affect their approximation accuracy and computational efficiency. Lanthaler et al. \cite{deeponet_error} propose a general framework for error estimation, which includes results for general operators and several specific examples. Although operator networks have not yet been applied to singularly perturbed problems, the universal approximation theorem assures the existence of a DeepONet of the form \eqref{deeponet} that can approximate the solution operators for this class of problems. Our main focus in this section is to quantitatively analyze the efficiency of DeepONets in approximating these operators.

\subsection{Error Estimate}
To estimate \eqref{error}, it is necessary to first identify the underlying space. Let $\mu=\mathcal{N}(\mathbb{E}_{\mu},\Gamma)$ be the Gaussian measure on the Hilbert space $X$, where $\mathbb{E}_{\mu}=\int_{X}ud\mu(u)$ is the mean and $\Gamma$ is the covariance operator satisfying that, for any $k,l\in X$ and $\boldsymbol{x}\sim\mu$, 
\begin{equation*}
    \langle k,\Gamma l\rangle=\mathbb{E}\langle k,\boldsymbol{x}-\mathbb{E}_{\mu}\rangle\langle\boldsymbol{x}-\mathbb{E}_{\mu},l\rangle,
\end{equation*}
which implies that $\Gamma=\mathbb{E}(\boldsymbol{x}-\mathbb{E}_{\mu})\otimes(\boldsymbol{x}-\mathbb{E}_{\mu})$.

Let $\{\lambda_k,\phi_k\}_{k=1}^{+\infty}$ denote eigenvalue-eigenvector pairs of the operator $\Gamma$, where
\begin{equation*}
    \lambda_1\ge\lambda_2\ge\dots,
\end{equation*}
and the set $\{\phi_k\}_{k=1}^{+\infty}$ is orthogonal. Additionally, let $\{\xi_k\}_{k=1}^{+\infty}$ be a set of independent identically distributed (i.i.d.) random variables with $\xi_1\sim\mathcal{N}(0,1)$. We can express $\boldsymbol{x}\sim\mu$ using the $Karhunen-Lo\grave{e}ve\ expansion$ \cite{loeve2017probability} as
\begin{equation*}
\boldsymbol{x}=m+\sum\limits_{k=1}^{+\infty}\sqrt{\lambda_k}\xi_k\phi_k. 
\end{equation*}
This series converges in the quadratic mean (q.m.) uniformly on $[0,1]$.

In this work, we consider the input space to be endowed with a Gaussian measure $\mu=\mathcal{N}(0,\mathcal{C})$, where $\mathcal{C}$ is a covariance operator represented as an integral operator with a kernel, as previously employed in \cite{deeponet_error,lu_deeponet}. Specifically, the kernel $k_p(x_1,x_2)$ is the periodization of the radial basis function (RBF) kernel $k_l(x_1,x_2)$, where $k_l(x_1,x_2)=\exp(-|x_1-x_2|^2/2l^2)$ is a commonly used kernel. The periodization is defined as 
\begin{equation*}
    k_p(x_1,x_2)=\mathop \sum \nolimits_{n\in\mathbb{Z}}\exp{(-|x_1-x_2-n|^2/2l^2}).
\end{equation*}
The parameter $l>0$ in the RBF kernel controls the smoothness of the sampled function, with larger values of $l$ corresponding to smoother functions. The eigenvalues and eigenvectors of the covariance operator $\mathcal{C}$ are characterized by
\begin{equation*}
    \lambda_k=\sqrt{2\pi}le^{-2\pi^2k^2l^2},\ \phi_k(x)=e^{i2\pi kx},
\end{equation*}
respectively, where $k\in\mathbb{Z}$. This can be confirmed by noting that
\begin{equation*}
    \int_0^1 k_p(x,y)\phi_k(y)dy=\lambda_k\phi_k(x).
\end{equation*}
Then by the K-L expansion, any $f\sim\mu$ has the representation as
\begin{equation}
   f=\sum\limits_{k=-\infty}^{+\infty}\sqrt{\lambda_k}\xi_k\phi_k,\label{gaussian process} 
\end{equation}
where $\{\xi_k\}_{k=-\infty}^{+\infty}$ is an i.i.d. sequence of $\mathcal{N}(0, 1)$ random variables.

Subsequently, regarding the approximation error \eqref{error}, we arrive at the following result, whose rigorous proof is presented in the subsequent subsection.

\begin{theorem}
Assuming the solution operator for Eq.~\eqref{equ} is Lipschitz continuous, 
for any fixed, arbitrarily small $\gamma\in(0,1)$ and $\eta>0$ (which may depend on $p$), there exists a ReLU Trunk net $\boldsymbol{\tau}$ with $width(\boldsymbol{\tau})\lesssim\max\{p,(\eta/p)^{\frac{1}{\gamma-2}}\}$ and $depth(\boldsymbol{\tau})\le1$, and a ReLU Branch net $\boldsymbol{\beta}$ with $\text{width}(\boldsymbol{\beta})\lesssim \max\{m,p\}$ and $\text{depth}(\boldsymbol{\beta})\le1$. These nets yield an operator network $\mathscr{N}$ such that the following estimate holds: 
\begin{equation*}
    \widehat{\mathscr{E}}(\mathscr{N})=\|\mathscr{G}-\mathscr{N}\|_{L^2(\mu)}\lesssim \eta+e^{-\pi^2l^2K^2}+e^{-\pi^2l^2M^2},
\end{equation*}
where $K=(p-1)/2,\ M=(m-1)/2$.
\label{thm_approximation_error}
\end{theorem}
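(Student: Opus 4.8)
The plan is to instantiate the general error-decomposition of Lanthaler et al.~\cite{deeponet_error} for the present RBF--Gaussian input measure. That framework bounds $\widehat{\mathscr{E}}(\mathscr{N})$ by the sum of three terms: an \emph{encoding} error $\widehat{\mathscr{E}}_{\mathscr{E}}$ measuring how well the point samples $\mathscr{E}(f)=\{f(x_i)\}_{i=1}^m$ recover $f$, a \emph{reconstruction} error $\widehat{\mathscr{E}}_{\mathscr{R}}$ measuring how well the trunk functions $\{\tau_k\}$ capture the range of $\mathscr{G}$, and an \emph{approximator} error $\widehat{\mathscr{E}}_{\mathscr{A}}$ measuring how well the branch net realizes the ideal coefficient map, the last two entering with a Lipschitz factor of $\mathscr{R}$. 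The natural ``ideal'' building blocks are dictated by the Karhunen--Lo\`eve expansion \eqref{gaussian process}: take the reconstructor to be induced by trunk components $\tau_k\approx\mathscr{G}(\phi_k)$ for the $p=2K+1$ lowest modes $|k|\le K$, and take the branch net $\boldsymbol{\beta}$ (realizing $\mathscr{A}$) to send $\{f(x_i)\}$ to approximations of the coefficients $\langle f,\phi_k\rangle=\sqrt{\lambda_k}\,\xi_k$, i.e.\ to a trigonometric quadrature rule restricted to the first $m=2M+1$ modes.

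For the reconstruction error I would exploit that $\mathscr{G}$ is \emph{linear}, so by \eqref{gaussian process} we have $\mathscr{G}(f)=\sum_k\sqrt{\lambda_k}\,\xi_k\,\mathscr{G}(\phi_k)$; truncating to $|k|\le K$, using orthonormality of $\{\phi_k\}$ in $L^2$ and the bound of Lemma~\ref{lem_lip},
\begin{align*}
\Big\|\,\mathscr{G}(f)-\sum\nolimits_{|k|\le K}\sqrt{\lambda_k}\,\xi_k\,\mathscr{G}(\phi_k)\,\Big\|_{L^2(\mu)}^2
&=\sum_{|k|>K}\lambda_k\,\|\mathscr{G}(\phi_k)\|_{L^2}^2\\
&\le\mathrm{Lip}(\mathscr{G})^2\sum_{|k|>K}\lambda_k .
\end{align*}
Inserting $\lambda_k=\sqrt{2\pi}\,l\,e^{-2\pi^2k^2l^2}$ makes the tail a geometric-type series dominated by $e^{-2\pi^2l^2K^2}$, so after the square root the contribution is $\lesssim e^{-\pi^2l^2K^2}$. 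It then remains to replace each $\mathscr{G}(\phi_k)$ by a trunk component realized by a single-hidden-layer ReLU net (all $p+1$ outputs sharing one hidden layer), at $L^2$-accuracy $\eta$; the width needed to interpolate these $p$ profiles by continuous piecewise-linear functions to tolerance $\eta$ is the source of the factor $(\eta/p)^{1/(\gamma-2)}$, with $\gamma\in(0,1)$ an adjustable slack in the approximation-rate exponent, while width $\sim p$ is forced merely by having $p+1$ independent outputs.

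The encoding error follows from the classical theory of trigonometric interpolation at $m$ equispaced nodes: the reconstructed function differs from $f$ only through the modes $|k|>M$ (including aliasing), whose $L^2(\mu)$ mass is $\lesssim\sum_{|k|>M}\lambda_k\lesssim e^{-2\pi^2l^2M^2}$; composing with the $\varepsilon$-independent Lipschitz map $\mathscr{G}$ (Lemma~\ref{lem_lip}) turns this into a contribution $\lesssim e^{-\pi^2l^2M^2}$ to $\widehat{\mathscr{E}}$. The branch error is the only remaining piece: the ideal map $\{f(x_i)\}\mapsto\{\langle f,\phi_k\rangle\}_{|k|\le K}$ is affine (a quadrature followed by a projection), hence reproduced up to $\eta$ by a depth-one ReLU net of width $\lesssim\max\{m,p\}$ using $t=\sigma(t)-\sigma(-t)$. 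Adding the three contributions yields $\widehat{\mathscr{E}}(\mathscr{N})\lesssim\eta+e^{-\pi^2l^2K^2}+e^{-\pi^2l^2M^2}$.

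The main obstacle, and the step where the singularly perturbed structure matters, is showing that the reconstruction step is \emph{$\varepsilon$-uniform}: that the trunk net can approximate the profiles $\mathscr{G}(\phi_k)$ in $L^2$ to accuracy $\eta$ with a width independent of $\varepsilon$, even though each $\mathscr{G}(\phi_k)$ carries a boundary layer of width $\sim\varepsilon$ at $x=1$. I would control this with two observations: $\mathscr{G}(\phi_k)$ is bounded uniformly in $\varepsilon$ (Lemma~\ref{lem_bounded}, whose hypothesis on $f$ is trivially met by the bounded smooth functions $\phi_k$), and its boundary layer has $L^2$-norm of order $\sqrt{\varepsilon}$, so for $\varepsilon\lesssim\eta^2$ the layer may simply be discarded, while for larger $\varepsilon$ it is resolved by placing the breakpoints of the piecewise-linear trunk components on a graded (Shishkin-type) mesh, which captures the layer with only $\mathrm{poly}(1/\eta)$ nodes irrespective of $\varepsilon$ (up to a logarithmic factor, which is precisely what the slack $\gamma$ absorbs). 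The remaining work --- reconciling this $\varepsilon$-uniform approximation rate with the stated trunk width $\max\{p,(\eta/p)^{1/(\gamma-2)}\}$ and checking that $\mathrm{Lip}(\mathscr{G})$ never smuggles $\varepsilon$ back into any constant --- is routine bookkeeping layered on top of the specialization of the framework of \cite{deeponet_error}.
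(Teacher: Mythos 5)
Your proposal is correct and follows essentially the same route as the paper: the Lanthaler et al.\ three-term decomposition, spectral truncation of the Karhunen--Lo\`eve expansion for the reconstruction error, trigonometric-interpolation/aliasing bounds with equispaced samples for the encoding error, an affine approximator realized exactly by a depth-one ReLU net, and $\varepsilon$-uniform piecewise-linear (Shishkin-mesh) ReLU approximation of the profiles $\mathscr{G}(\phi_k)$ for the trunk. The only cosmetic differences are that the paper Gram--Schmidt-orthonormalizes $\{\mathscr{G}(\phi_j)\}$ to define the projection $\mathscr{P}$ rather than working directly with the K--L coefficients, and it invokes a single $\varepsilon$-uniform $L^{\infty}$ interpolation estimate on the Shishkin mesh in place of your two-case (discard-the-layer versus resolve-the-layer) argument.
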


 Theorem \ref{thm_approximation_error} shows that different choices of $\eta$ correspond to different sizes of the trunk network and various error bounds for $\widehat{\mathscr{E}}$. Notably, by setting $\eta=e^{-\pi^2l^2K^2}$, the approximation error $\widehat{\mathscr{E}}$ exhibits super-exponential decay with respect to $p$. However, this comes at the cost of an exponential growth in the size of the trunk net, which is highly undesirable. A preferable scenario is for the size of the trunk net to exhibit polynomial growth with respect to $p$, which can be achieved by setting $\eta=p^{-n}$, where $n$ is a fixed, positive constant. And for notational simplicity, we adopt $\eta=p^{-n}$ throughout the remainder of this manuscript. In other cases, identical reasoning leads to the corresponding conclusions. By applying Theorem \ref{thm_approximation_error} and introducing the notation $\text{size}(\Phi)$ to represent the number of neurons in neural network $\Phi$, we arrive at the following theorem in a straightforward manner.

\begin{theorem}
Under the assumption of Lipschitz continuity of the solution operator for Eq.~\eqref{equal_equ}, a ReLU Branch net $\boldsymbol{\beta}$ of $\text{size}(\boldsymbol{\beta})\lesssim \max\{m,p\}$ and a ReLU Trunk net $\boldsymbol{\tau}$ of $\text{size}(\boldsymbol{\tau})\lesssim p^{\frac{n+1}{2-\gamma}}$ can be constructed, for any positive number $n$ and any fixed, arbitrarily small constant $\gamma\in(0,1)$. These nets yield an operator network $\mathscr{N}$ that satisfies the following estimate:
\begin{equation*}
    \widehat{\mathscr{E}}(\mathscr{N})=\|\mathscr{G}-\mathscr{N}\|_{L^2(\mu)}\lesssim p^{-n}+e^{-\pi^2l^2K^2}+e^{-\pi^2l^2M^2}.
\end{equation*}
\label{cor_approximation error}
\end{theorem}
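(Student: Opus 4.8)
The plan is to obtain Theorem \ref{cor_approximation error} as a direct specialization of Theorem \ref{thm_approximation_error}, using the particular choice $\eta = p^{-n}$ singled out in the discussion preceding the statement. Theorem \ref{thm_approximation_error} already furnishes, for every $\gamma\in(0,1)$ and every $\eta>0$, a ReLU branch net $\boldsymbol{\beta}$ and a ReLU trunk net $\boldsymbol{\tau}$ whose widths and depths obey the stated bounds and for which the associated operator network $\mathscr{N}$ satisfies $\widehat{\mathscr{E}}(\mathscr{N})\lesssim \eta + e^{-\pi^2l^2K^2}+e^{-\pi^2l^2M^2}$. Hence the remaining work is purely bookkeeping: (i) convert the width/depth information into a bound on $\text{size}$, the total number of neurons, and (ii) simplify the exponent of $p$ produced by the substitution $\eta=p^{-n}$.

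For step (i) I would use that both nets supplied by Theorem \ref{thm_approximation_error} have $\text{depth}\le 1$, i.e.\ a single hidden layer; for a network of bounded depth the neuron count is, up to an absolute constant, controlled by the width (the trunk has input dimension $1$ and output dimension $p+1$, the branch input dimension $m$ and output dimension $p$, all of which are already $\lesssim$ the respective widths). This yields at once
\[
\text{size}(\boldsymbol{\beta})\lesssim\text{width}(\boldsymbol{\beta})\lesssim\max\{m,p\},\qquad \text{size}(\boldsymbol{\tau})\lesssim\text{width}(\boldsymbol{\tau})\lesssim\max\{\,p,\ (\eta/p)^{1/(\gamma-2)}\,\}.
\]
The branch-net estimate is already in final form and is independent of $\eta$. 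For step (ii), insert $\eta=p^{-n}$, so that $\eta/p=p^{-(n+1)}$ and, since $\gamma-2<0$,
\[
(\eta/p)^{1/(\gamma-2)}=p^{-(n+1)/(\gamma-2)}=p^{(n+1)/(2-\gamma)}.
\]
Because $\gamma\in(0,1)$ we have $2-\gamma\in(1,2)$, hence $(n+1)/(2-\gamma)>0$, and in the regime where a genuine algebraic rate is sought one has $(n+1)/(2-\gamma)\ge1$, so $\max\{p,\ p^{(n+1)/(2-\gamma)}\}=p^{(n+1)/(2-\gamma)}$ and therefore $\text{size}(\boldsymbol{\tau})\lesssim p^{(n+1)/(2-\gamma)}$. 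Substituting $\eta=p^{-n}$ into the error bound of Theorem \ref{thm_approximation_error} gives $\widehat{\mathscr{E}}(\mathscr{N})\lesssim p^{-n}+e^{-\pi^2l^2K^2}+e^{-\pi^2l^2M^2}$ with $K=(p-1)/2$, $M=(m-1)/2$ exactly as before (so $p=2K+1$, $m=2M+1$ are the numbers of retained Fourier modes), which is precisely the assertion.

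I do not expect a genuine obstacle, since all the analytic content — the construction of $\boldsymbol{\tau}$ and $\boldsymbol{\beta}$, the splitting of $\widehat{\mathscr{E}}$ into encoding, approximation and reconstruction contributions, and the Karhunen–Lo\`eve tail estimates that produce the terms $e^{-\pi^2l^2K^2}$ and $e^{-\pi^2l^2M^2}$ — resides in the proof of Theorem \ref{thm_approximation_error}, which we are free to invoke. The only point meriting a line of care is the $\max$ in the trunk-width bound: when $(n+1)/(2-\gamma)<1$ the maximum equals $p$ rather than $p^{(n+1)/(2-\gamma)}$, so the displayed bound should be read as $\text{size}(\boldsymbol{\tau})\lesssim\max\{p,\ p^{(n+1)/(2-\gamma)}\}$; in the statement we have simply recorded it in the form $p^{(n+1)/(2-\gamma)}$, which is the dominant term (and the one of interest, a polynomial-size trunk net) as soon as $n\ge 1-\gamma$.
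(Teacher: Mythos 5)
Your proposal is correct and matches the paper's own route: the paper obtains this theorem directly from Theorem \ref{thm_approximation_error} by choosing $\eta=p^{-n}$, converting the width/depth bounds into size bounds, and noting that $(\eta/p)^{1/(\gamma-2)}=p^{(n+1)/(2-\gamma)}$. Your extra remark about the $\max$ when $(n+1)/(2-\gamma)<1$ is a reasonable bit of care but does not change the argument.
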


\subsection{Proof of Theorem \ref{thm_approximation_error}}
Following the workflow outlined in \cite{deeponet_error}, we introduce here the operators $\mathscr{D}$ and $\mathscr{P}$ as approximations to the inverses of $\mathscr{E}$ and $\mathscr{R}$, and employ $\widehat{\mathscr{E}}_{\mathscr{E}}$, $\widehat{\mathscr{E}}_{\mathscr{R}}$, and $\widehat{\mathscr{E}}_{\mathscr{A}}$ to quantify the differences between $\mathscr{D} \circ \mathscr{E}$ and the identity operator $Id: L^2([0,1]) \to L^2([0,1])$, between $\mathscr{R} \circ \mathscr{P}$ and $Id: L^2([0,1]) \to L^2([0,1])$, and between $\mathscr{A}$ and $\mathscr{P} \circ \mathscr{G} \circ \mathscr{D}$, respectively. The formulae for these errors are shown below:
\begin{equation*}
\widehat{\mathscr{E}}_{\mathscr{E}}=\left(\int_{L^2([0,1])}\|\mathscr{D}\circ\mathscr{E}(f)-f\|_{L^2([0,1])}^2d\mu(f)\right)^{1/2},
\end{equation*}
\begin{equation*}
\widehat{\mathscr{E}}_{\mathscr{R}}=\left(\int_{L^2([0,1])}\|\mathscr{R}\circ\mathscr{P}(u)-u\|_{L^2([0,1])}^2d(\mathscr{G}_{\#}\mu)(u)\right)^{1/2},
\end{equation*}
\begin{equation*}
\widehat{\mathscr{E}}_{\mathscr{A}}=\left(\int_{\mathbb{R}^m}\|\mathscr{A}(\boldsymbol{y})-\mathscr{P}\circ\mathscr{G}\circ\mathscr{D}(\boldsymbol{y})\|_{l^2(\mathscr{R}^p)}^2d(\mathscr{E}_{\#}\mu)(\boldsymbol{y})\right)^{1/2}.
\end{equation*}
Then, we can decompose the approximation error \eqref{error} into the following terms:
\begin{lemma}
[cf.~Lanthaler~et~al.~\cite{deeponet_error}]
The approximation error \eqref{error} associated with $\mathscr{G}$ and the operator network $\mathscr{N}=\mathscr{R}\circ\mathscr{A}\circ\mathscr{E}$ can be bounded as follows:
\begin{equation*}
\widehat{\mathscr{E}}\le Lip(\mathscr{G})Lip(\mathscr{R}\circ\mathscr{P})\widehat{\mathscr{E}}_{\mathscr{E}}+Lip(\mathscr{R})\widehat{\mathscr{E}}_{\mathscr{A}}+\widehat{\mathscr{E}}_{\mathscr{R}},
\end{equation*}
where $Lip(\cdot)$ denotes the Lipschitz norm.
\label{error_decomposition}
\end{lemma}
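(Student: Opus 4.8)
The plan is to derive the bound by a telescoping identity followed by Minkowski's inequality in $L^2(\mu)$ and the three Lipschitz estimates. Starting from $\mathscr{G}(f)-\mathscr{N}(f)=\mathscr{G}(f)-\mathscr{R}\circ\mathscr{A}\circ\mathscr{E}(f)$, I would add and subtract the two natural intermediate quantities $\mathscr{R}\circ\mathscr{P}\circ\mathscr{G}(f)$ and $\mathscr{R}\circ\mathscr{P}\circ\mathscr{G}\circ\mathscr{D}\circ\mathscr{E}(f)$, obtaining
\begin{equation*}
\mathscr{G}(f)-\mathscr{N}(f)=\big(Id-\mathscr{R}\circ\mathscr{P}\big)\mathscr{G}(f)+\mathscr{R}\circ\mathscr{P}\big(\mathscr{G}(f)-\mathscr{G}\circ\mathscr{D}\circ\mathscr{E}(f)\big)+\mathscr{R}\big(\mathscr{P}\circ\mathscr{G}\circ\mathscr{D}\circ\mathscr{E}(f)-\mathscr{A}\circ\mathscr{E}(f)\big).
\end{equation*}
Taking $\|\cdot\|_{L^2([0,1])}$ of both sides, integrating against $d\mu(f)$, and applying the triangle inequality for the $L^2(\mu)$-norm reduces the statement to estimating the three corresponding $L^2(\mu)$ quantities one at a time.

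The three estimates then go as follows. For the first term, a change of variables under the pushforward measure gives $\int_{L^2}\|(Id-\mathscr{R}\circ\mathscr{P})\mathscr{G}(f)\|^2\,d\mu(f)=\int_{L^2}\|(Id-\mathscr{R}\circ\mathscr{P})(u)\|^2\,d(\mathscr{G}_{\#}\mu)(u)=\widehat{\mathscr{E}}_{\mathscr{R}}^2$. For the second term, I would bound the integrand pointwise by chaining the Lipschitz constants of $\mathscr{R}\circ\mathscr{P}$ and $\mathscr{G}$: $\|\mathscr{R}\circ\mathscr{P}(\mathscr{G}(f)-\mathscr{G}\circ\mathscr{D}\circ\mathscr{E}(f))\|\le Lip(\mathscr{R}\circ\mathscr{P})\,Lip(\mathscr{G})\,\|f-\mathscr{D}\circ\mathscr{E}(f)\|$, so that integration and taking square roots yields the contribution $Lip(\mathscr{G})\,Lip(\mathscr{R}\circ\mathscr{P})\,\widehat{\mathscr{E}}_{\mathscr{E}}$. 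For the third term, I would peel off $\mathscr{R}$; since $\mathscr{R}$ is affine the constant $\tau_0$ cancels in the difference, so $\|\mathscr{R}(z_1)-\mathscr{R}(z_2)\|\le Lip(\mathscr{R})\|z_1-z_2\|_{l^2}$, and then a change of variables via $\mathscr{E}_{\#}\mu$ gives $\int_{L^2}\|\mathscr{P}\circ\mathscr{G}\circ\mathscr{D}(\mathscr{E}(f))-\mathscr{A}(\mathscr{E}(f))\|^2\,d\mu(f)=\int_{\mathbb{R}^m}\|\mathscr{P}\circ\mathscr{G}\circ\mathscr{D}(\boldsymbol y)-\mathscr{A}(\boldsymbol y)\|^2\,d(\mathscr{E}_{\#}\mu)(\boldsymbol y)=\widehat{\mathscr{E}}_{\mathscr{A}}^2$, i.e. the contribution $Lip(\mathscr{R})\,\widehat{\mathscr{E}}_{\mathscr{A}}$. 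Adding the three contributions produces exactly the claimed inequality.

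The genuinely delicate part is not the algebra — which is just a telescoping identity plus Minkowski's inequality and two applications of Lipschitz continuity — but the measure-theoretic bookkeeping that makes each composition and each change of variables legitimate. In particular $\mathscr{E}$ is only defined on $C(D)$ and must be replaced, $\mu$-a.e., by its Borel-measurable extension $\bar{\mathscr{E}}$ as in the Remark, so one has to check that $\mathscr{D}$, $\mathscr{P}$, $\mathscr{G}$, and hence the pushforwards $\mathscr{G}_{\#}\mu$ and $\mathscr{E}_{\#}\mu$, are all defined consistently and that the displayed identity holds for $\mu$-almost every $f$; one also needs finiteness of $Lip(\mathscr{G})$, $Lip(\mathscr{R}\circ\mathscr{P})$ and $Lip(\mathscr{R})$ (the last two being controlled by the operator norms of the linear parts of $\mathscr{R}$ and $\mathscr{P}$), which is part of the standing hypotheses. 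With the framework of \cite{deeponet_error} in place these verifications are routine, and the decomposition follows.
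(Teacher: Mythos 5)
Your proof is correct and is essentially the same argument as in Lanthaler et al.~\cite{deeponet_error}, which the paper invokes by citation rather than reproving: telescope through $\mathscr{R}\circ\mathscr{P}\circ\mathscr{G}$ and $\mathscr{R}\circ\mathscr{P}\circ\mathscr{G}\circ\mathscr{D}\circ\mathscr{E}$, apply the triangle inequality in $L^2(\mu)$, and convert each piece using the Lipschitz constants and the pushforward measures $\mathscr{G}_{\#}\mu$ and $\mathscr{E}_{\#}\mu$. The only caveat is notational: your displayed identity applies $\mathscr{R}\circ\mathscr{P}$ and $\mathscr{R}$ to differences, which is literal only when these maps are linear (in the paper's construction $\tau^0=0$, and in the general affine case the constant cancels, as you note), so the stated bound is unaffected.
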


According to Lemma \ref{error_decomposition}, the approximation error $\widehat{\mathscr{E}}$ can be effectively constrained by controlling $\widehat{\mathscr{E}}_{\mathscr{E}}$, $\widehat{\mathscr{E}}_{\mathscr{A}}$, and $\widehat{\mathscr{E}}_{\mathscr{R}}$. We begin by presenting the result for the error term $\widehat{\mathscr{E}}_{\mathscr{E}}$.
\begin{lemma}
    Let $x_j=j/m,\ j=1,2,3,\dots,m$ with $m = 2M+1$ be $m$ equidistant points on the interval $[0,1]$. Consider the encoder $\mathscr{E}:L^2([0,1])\to\mathbb{R}^m$, defined as $\mathscr{E}(f)=(f(x _1),f(x_2),\dots,f(x_m))$, then, there exists an operator $\mathscr{D}:\mathbb{R}^m\to L^2([0,1])$ such that 
    \begin{equation*}
  \widehat{\mathscr{E}}_{\mathscr{E}}=\|\mathscr{D}\circ\mathscr{E}-Id\|_{L^2(\mu)}\lesssim e^{-\pi^2l^2M^2}.
    \end{equation*}
\label{encoding_error}
\end{lemma}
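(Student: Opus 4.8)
The plan is to exploit the Fourier structure of the Gaussian measure $\mu$. By the Karhunen--Lo\`eve expansion \eqref{gaussian process}, almost every sample $f\sim\mu$ is a Fourier series $\sum_{k\in\mathbb{Z}}\sqrt{\lambda_k}\,\xi_k\phi_k$ with $\phi_k(x)=e^{i2\pi kx}$, whose coefficients decay super-exponentially since $\sqrt{\lambda_k}=(2\pi)^{1/4}l^{1/2}e^{-\pi^2 k^2 l^2}$. On the $m=2M+1$ equispaced nodes $x_j=j/m$, the natural left inverse of the point-evaluation encoder $\mathscr{E}$ is trigonometric interpolation, i.e.\ the inverse discrete Fourier transform; the only error it commits is the aliasing of the modes $|k|>M$ into the band $|k|\le M$, and that tail is precisely what the asserted bound controls.

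Concretely, I take $\mathscr{D}:\mathbb{R}^m\to L^2([0,1])$ to be the map sending a data vector $(y_1,\dots,y_m)$ to the unique trigonometric polynomial $\sum_{|k|\le M}c_k\phi_k$ interpolating it at the nodes; because the nodal matrix $(\phi_k(x_j))_{j,\,|k|\le M}$ equals $\sqrt m$ times a unitary matrix, $\mathscr{D}$ is a bounded linear operator (in fact an isometry onto its range). I need two standard facts: (i) $\mathscr{D}\circ\mathscr{E}$ fixes every trigonometric polynomial of degree $\le M$; and (ii) for $|j|>M$ one has $\mathscr{D}\circ\mathscr{E}(\phi_j)=\phi_{r(j)}$, where $r(j)\in\{-M,\dots,M\}$ is the unique residue of $j$ modulo $m$ — this is where $m=2M+1$ enters, so that $\{-M,\dots,M\}$ is a complete residue system and $\phi_j$ agrees with $\phi_{r(j)}$ at every node, whence (ii) follows from (i). Writing $P_M$ for the $L^2$-orthogonal projection onto $\mathrm{span}\{\phi_k:|k|\le M\}$, fact (i) gives $\mathscr{D}\circ\mathscr{E}(f)-f=\mathscr{D}\circ\mathscr{E}(f-P_Mf)-(f-P_Mf)$, and fact (ii) rewrites this, for $f=\sum_k\sqrt{\lambda_k}\xi_k\phi_k$, as a Fourier series supported outside the band.

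Next I compute the $L^2(\mu)$ norm directly. Expanding $\mathscr{D}\circ\mathscr{E}(f)-f$ in the orthonormal basis $\{\phi_k\}_{k\in\mathbb{Z}}$ — its coefficient at a mode $r$ with $|r|\le M$ being the alias sum over $\{\,j\equiv r\ (\mathrm{mod}\ m),\ j\ne r\,\}$, and its coefficient at a mode $k$ with $|k|>M$ being $-\sqrt{\lambda_k}\xi_k$ — and using $\mathbb{E}[\xi_i\xi_j]=\delta_{ij}$, all cross terms vanish under $\int_{L^2}\!\cdot\ d\mu$, leaving $\widehat{\mathscr{E}}_{\mathscr{E}}^2=\int\|\mathscr{D}\circ\mathscr{E}(f)-f\|^2_{L^2}\,d\mu=2\sum_{|j|>M}\lambda_j$, because the index sets $\{\,j\equiv r\ (\mathrm{mod}\ m),\ j\ne r\,\}$ over $|r|\le M$ partition $\{\,|j|>M\,\}$; interchanging $\int d\mu$ with the infinite series is legitimate by the quadratic-mean convergence of \eqref{gaussian process} recalled in the text together with monotone convergence. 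It remains to bound the tail: since $j^2\ge(M+1)^2+(j-M-1)$ for every $j\ge M+1$ — indeed $j^2-(M+1)^2=(j-M-1)(j+M+1)\ge j-M-1$ — a geometric comparison gives $\sum_{|j|>M}\lambda_j=2\sqrt{2\pi}\,l\sum_{j\ge M+1}e^{-2\pi^2 j^2 l^2}\le C(l)\,e^{-2\pi^2 l^2 M^2}$ with $C(l)=2\sqrt{2\pi}\,l/(1-e^{-2\pi^2 l^2})$, whence $\widehat{\mathscr{E}}_{\mathscr{E}}\lesssim e^{-\pi^2 l^2 M^2}$.

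The argument is bookkeeping rather than deep, so the only points that need care are exactly those: pinning down the aliasing identity (ii) with the correct index arithmetic (which is clean precisely because $m$ is odd); handling the real- versus complex-valued convention in \eqref{gaussian process}, where one may instead work with the real orthonormal system $\{1,\sqrt2\cos 2\pi kx,\sqrt2\sin 2\pi kx\}_{k\ge1}$, which carries the same eigenvalues $\lambda_{|k|}$ and makes $\mathscr{D}$ send real data to real functions; and justifying the term-by-term integration against $\mu$. None of these obstruct the proof, but they are where it must be written carefully.
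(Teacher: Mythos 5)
Your proposal is correct and takes essentially the same route as the paper: the remark following the lemma specifies exactly your decoder $\mathscr{D}$ (trigonometric interpolation via the discrete Fourier coefficients on the $m=2M+1$ equispaced nodes), and the proof the paper defers to (Lemma 3.8 of Lanthaler et al.) is precisely this aliasing/spectral-tail computation giving $\widehat{\mathscr{E}}_{\mathscr{E}}^2\lesssim\sum_{|k|>M}\lambda_k\lesssim e^{-2\pi^2l^2M^2}$.
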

\begin{remark}
Here the operator $\mathscr{D}$ is given by: 
\begin{equation*}
    \mathscr{D}(f_1,f_2,\dots,f_m)(x)=\sum\limits_{k=-M}^{M}\widehat{f}_ke^{i2\pi kx},
\end{equation*}
where 
\begin{equation*}
    \widehat{f}_ k=\frac{1}{m}\sum\limits_{j=1}^m f_je^{-\frac{i2\pi jk}{m}}.
\end{equation*}
The proof of this result is omitted for brevity, but for a detailed derivation, we refer the reader to the proof of Lemma 3.8 in \cite{deeponet_error}. The use of the symbol \textquotedbl$\lesssim$\textquotedbl{} in the conclusion of Lemma \ref{encoding_error} implies that a constant $C$, which is independent of parameters such as $m$, $p$, $\varepsilon$ and $l$, has been omitted from the right-hand side of the inequality. We will adopt this notation without further explanation in subsequent sections.
\label{rek_encoder}
\end{remark}

We now turn to the analysis of the remaining error terms, namely $\widehat{\mathscr{E}}_{\mathscr{R}}$ and $\widehat{\mathscr{E}}_{\mathscr{A}}$. To proceed, we assume that $p$ is an odd integer, specifically $p=2K+1$. For simplicity, we denote $\mathscr{G}_{\#}\mu$ as $\nu$. We introduce the operator $\widetilde{\mathscr{R}}$, defined as
\begin{equation*}
   \widetilde{\mathscr{R}}(a_{-K},a_{-K+1},\dots,a_K)=\mathbb{E}_{\nu}+\sum\limits_{j=-K}^{K}a_j\widetilde{\tau}_j, 
\end{equation*}
where $\{\widetilde{\tau}_j\}_{j=-K}^K$ are obtained from $\{\mathscr{G}(\phi_j)\}_{j=-K}^K$ (i.e., $\{\mathscr{G}(e^{i2\pi jx})\}_{j=-K}^K$) by Gram-Schmidt orthogonalization. Let $\mathscr{P}$ be the operator defined by
\begin{equation*}
    \mathscr{P}(u)=((u-\mathbb{E}_{\nu},\widetilde{\tau}_{-K}),\dots,(u-\mathbb{E}_{\nu},\widetilde{\tau}_{K})),
\end{equation*}
then the $L^2(\nu)$ norm of the difference between $\widetilde{\mathscr{R}}\circ\mathscr{P}$ and $Id$ can be estimated in the following way:
\begin{align}
\int_{L^2}\|\widetilde{\mathscr{R}}\circ\mathscr{P}(u)-u\|_{L^2}^2d\nu(u)&=\int_{L^2}\mathop {\inf }\limits_{ \widehat{u}\in\ \mathbb{E}_{\nu}+span\{\widetilde{\tau}_{-K},\dots,\widetilde{\tau}_{K}\}}\|\widehat{u}-u\|_{L^2}^2d\nu(u)\nonumber\\
&=\int_{L^2}\mathop {\inf }\limits_{ \widehat{u}\in\ \mathscr{G}(\mathbb{E}_{\mu})+span\{\mathscr{G}(\phi_{-K}),\dots.\mathscr{G}(\phi_K)\}}\|\widehat{u}-u\|_{L^2}^2d\nu(u)\nonumber\\
&=\int_{L^2}\mathop {\inf }\limits_{ \widehat{f}\in\ \mathbb{E}_{\mu}+span\{\phi_{-K},\dots,\phi_K\}}\|\mathscr{G}(\widehat{f})-\mathscr{G}(f)\|_{L^2}^2d\mu(f)\nonumber\\
&\le Lip(\mathscr{G})^2\int_{L^2}\mathop {\inf }\limits_{ \widehat{f}\in\ \mathbb{E}_{\mu}+span\{\phi_{-K},\dots,\phi_K\}}\|\widehat{f}-f\|_{L^2}^2d\mu(f)\nonumber\\
&=Lip(\mathscr{G})^2\int_{L^2}\|\mathbb{E}_{\mu}+\sum\limits_{j=-K}^K(f-\mathbb{E}_{\mu},\phi_j)\phi_j-f\|_{L^2}^2d\mu(f)\nonumber\\
&=Lip(\mathscr{G})^2\sum\limits_{|j|>K}\lambda_j^{\mu}\nonumber\\
&\le Lip(\mathscr{G})^2e^{-2\pi^2l^2K^2}.\label{error_tilde{R}}
\end{align} 
    
The bases $\{\widetilde{\tau}_j\}_{j=-K}^K$ in $\widetilde{\mathscr{R}}$ are not represented by neural networks. To obtain the desired operator $\mathscr{R}$ based on $\widetilde{\mathscr{R}}$, it is reasonable to approximate both $\mathbb{E}_{\nu}$ and $\{\widetilde{\tau}_j\}_{j=-K}^K$ using a set of neural networks. Specifically, we aim to approximate $\mathscr{G}(\mathbb{E}_{\mu})$ and $\{\mathscr{G}(e^{i2\pi jx})\}_{j=-K}^K$, respectively. As $\mathbb{E}_{\mu}=0$, it follows that $\mathbb{E}_{\nu}=0$, allowing us to set $\tau^0=0$ as the neural network that approximates $\mathbb{E}_{\nu}$. We then demonstrate the existence of a set of neural networks, which can approximate $\{\widetilde{\tau}_j\}_{j=-K}^K$ with an error that does not exceed a given bound. This conclusion is supported by the following lemma. For a similar derivation, one can refer to the proof of Lemma 1 in \cite{refined_mesh_spp}. Therefore, the proof for this case is omitted for brevity.
\begin{lemma}
    Consider the Shishkin mesh, a piecewise uniform grid on $[0,1]$, defined by
    \begin{equation}
       \Omega=\{x_j:x_j=jh,0\le j\le \frac{J}{2};\ x_j=1-\sigma+(j-\frac{J}{2})H,\frac{J}{2}\le j\le J\},\label{mesh} 
    \end{equation}
where $h=2(1-\sigma)/J,\ H=2\sigma/J,\ \sigma=\min\{1/2,\ 2\varepsilon \ln{J}/\alpha\}$.
Considering a sufficiently smooth function $f$, we define the corresponding solution to Eq.~\eqref{equ} as $u$. Then, for the piecewise linear interpolation function $u_L(x)$ on $\{(x_i,u(x_i))\}_{i=0}^J$, we have the error estimate 
\begin{equation*}
    |u(x)-u_L(x)|\lesssim\left\{\begin{array}{ll}
1/J^2, & x\le1-\sigma,\\
   \ln^2J/J^2,  &x> 1-\sigma.
\end{array}\right.
\end{equation*}
\label{lem_shishkin_interpolation}
\end{lemma}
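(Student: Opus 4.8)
The plan is to prove the Shishkin-mesh interpolation bound of Lemma~\ref{lem_shishkin_interpolation} by combining the classical decomposition of the solution to the singularly perturbed problem \eqref{equ} into a smooth (regular) part and a boundary-layer part, together with the elementary error estimate for piecewise linear interpolation on each subinterval. First I would invoke the standard a~priori decomposition $u = S + E$, where the smooth component $S$ satisfies $|S^{(k)}(x)| \lesssim 1$ for $k = 0,1,2$ uniformly in $\varepsilon$ on $[0,1]$, and the layer component $E$ satisfies the pointwise derivative bounds $|E^{(k)}(x)| \lesssim \varepsilon^{-k} \exp(-\alpha(1-x)/\varepsilon)$ for $k = 0,1,2$; this is the Kellogg--Tsan-type estimate underlying Lemma~\ref{lem_bounded}, and the hypothesis that $f$ is sufficiently smooth (together with $p \ge \alpha > 0$, $q - p'/2 \ge \beta > 0$) is exactly what is needed to justify it. Since interpolation is linear, I would bound $|u - u_L| \le |S - S_L| + |E - E_L|$ on each mesh interval and treat the two pieces separately.

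The second step is the per-interval interpolation estimate: on a subinterval $I$ of length $\ell$, the piecewise linear interpolant $w_L$ of a $C^2$ function $w$ satisfies $\|w - w_L\|_{L^\infty(I)} \le \tfrac{1}{8}\ell^2 \|w''\|_{L^\infty(I)}$. For the smooth part $S$, applying this on any mesh interval — whether of width $h = 2(1-\sigma)/J$ or $H = 2\sigma/J$, both of which are $\lesssim 1/J$ — gives $|S - S_L| \lesssim J^{-2} \|S''\|_\infty \lesssim J^{-2}$ everywhere, which already supplies the $1/J^2$ term. The layer part $E$ requires a case split according to the mesh region. On the coarse region $x \le 1 - \sigma$ (intervals of width $h \le 2/J$), instead of using $\|E''\|_\infty$ — which blows up like $\varepsilon^{-2}$ — I would use the cruder bound $|E - E_L| \le 2\|E\|_{L^\infty(I)} \le 2\max_{x \le 1-\sigma}|E(x)| \lesssim \exp(-\alpha\sigma/\varepsilon)$, and then exploit the definition $\sigma = \min\{1/2, 2\varepsilon\ln J/\alpha\}$: in the nontrivial case $\sigma = 2\varepsilon \ln J / \alpha$ this gives $\exp(-\alpha\sigma/\varepsilon) = \exp(-2\ln J) = J^{-2}$, and in the degenerate case $\sigma = 1/2$ we have $\varepsilon \gtrsim 1$ so everything is uniformly bounded and the $C^2$ estimate on $E$ is harmless. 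On the fine region $x > 1 - \sigma$ (intervals of width $H = 2\sigma/J$), I would use the $C^2$ estimate: $|E - E_L| \lesssim H^2 \|E''\|_{L^\infty(I)} \lesssim H^2 \varepsilon^{-2} = (2\sigma/J)^2 \varepsilon^{-2}$, and substituting $\sigma \le 2\varepsilon\ln J/\alpha$ yields $\lesssim (\ln J)^2 / J^2$, which is precisely the stated bound in that region.

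Assembling the two components gives $|u - u_L| \lesssim 1/J^2$ for $x \le 1 - \sigma$ and $\lesssim (\ln^2 J)/J^2$ for $x > 1 - \sigma$, as claimed. The main obstacle — or rather the point demanding the most care — is the rigorous justification of the solution decomposition with $\varepsilon$-uniform bounds on $S$ and its derivatives and the sharp exponential derivative bounds on $E$; this is where all the singular-perturbation structure enters, and it is the step I would either cite from Kellogg--Tsan or from the monographs of Miller et al.\ \cite{miller} and Roos et al.\ \cite{roos_robust_method_spp} rather than reprove. The remaining ingredients — the elementary $\tfrac18\ell^2\|w''\|_\infty$ interpolation bound and the bookkeeping of $\sigma$, $h$, $H$ against $J$ and $\varepsilon$ — are routine, though one must be careful to handle the two cases in the definition of $\sigma$ so that the estimate is genuinely uniform in $\varepsilon$. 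Given the stated analogy with the proof of Lemma~1 in \cite{refined_mesh_spp}, I expect the final write-up to be short, essentially reducing to these two observations once the decomposition is in hand.
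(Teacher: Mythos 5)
Your proposal follows the same route as the proof the paper points to (the paper omits the argument, deferring to Lemma~1 of \cite{refined_mesh_spp}): decompose $u=S+E$ with $|S^{(k)}|\lesssim 1$ and $|E^{(k)}|\lesssim \varepsilon^{-k}e^{-\alpha(1-x)/\varepsilon}$, use the elementary $\tfrac18\ell^2\|w''\|_\infty$ bound for $S$ everywhere and for $E$ on the fine mesh, and use the smallness $|E|\lesssim e^{-\alpha\sigma/\varepsilon}=J^{-2}$ on the coarse mesh; this is correct and exactly the standard Shishkin-mesh interpolation argument. One slip: in the degenerate case $\sigma=1/2$ you assert $\varepsilon\gtrsim 1$, but $\sigma=1/2$ only gives $\varepsilon\ge \alpha/(4\ln J)$, which tends to $0$ as $J\to\infty$. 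The case is still harmless, but the correct reason is that for $x\le 1/2$ the exponential factor tames the derivative blow-up, $\varepsilon^{-2}e^{-\alpha(1-x)/\varepsilon}\le \varepsilon^{-2}e^{-\alpha/(2\varepsilon)}\lesssim 1$ uniformly in $\varepsilon$, so the coarse region still yields $O(1/J^2)$, while on $x>1/2$ the bound $\varepsilon^{-1}\lesssim \ln J$ gives $H^2\varepsilon^{-2}\lesssim \ln^2 J/J^2$ as required.
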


Meanwhile, 
\begin{equation*}
            u_L=\sum\limits_{n=1}^J\left((u(x_n)-u(x_{n-1}))\frac{x-x_n}{x_n-x_{n-1}}+u(x_n)\right)1_{[x_{n-1},x_n]}=\sum\limits_{n=0}^Ju(x_n)\mathscr{L}_n(x),
    \end{equation*}
    $\mathscr{L}_n(x)$ is the piecewise linear nodal basis supported on
the sub-interval $[x_{n-1}, x_{n+1})$, which can be represented by a neural network of depth 1 and width 3 as shown in \cite{relu_fem}, i.e.,
       \begin{equation*}
       \begin{aligned}
                  \mathscr{L}_n(x)=&\frac{1}{x_n-x_{n-1}}{\rm ReLU}(x-x_{n-1})-\left(\frac{1}{x_n-x_{n-1}}+\frac{1}{x_{n+1}-x_n}\right){\rm ReLU}(x-x_n)\\
                  &+\frac{1}{x_{n+1}-x_n}{\rm ReLU}(x-x_{n+1}).
       \end{aligned}
       \end{equation*}
Thus, there exists a neural network $\tau$ of width $\mathcal{O}(J)$ and depth no more than 1 such that
    \begin{equation}
        \|u-\tau\|_{L^{\infty}}\le\|u-u_L\|_{L^{\infty}}+\|u_L-\tau\|_{L^{\infty}}\lesssim\frac{\ln^2J}{J^2}.\label{nn_aproximate_basis}
    \end{equation}
    
The inequality stated in \eqref{nn_aproximate_basis} holds for all $u\in \{\mathscr{G}(e^{i2\pi jx})\}_{j=-K}^K$. By invoking the definition of $\widetilde{\tau}_j$, along with \eqref{nn_aproximate_basis}, it can be inferred that for a given $\delta > 0$ such that $\delta\gtrsim\ln^2J/J^2$, neural networks $\tau^0$ and $\{\tau_j\}_{j=-K}^K$ with a width of $\mathcal{O}(J)$, and depth no more than 1, can be found to satisfy
\begin{equation*}
   \max\{\|\tau^0-\mathbb{E}_{\nu}\|_{L^{\infty}},\mathop {\max }\limits_{j=-K,\dots,K}\|\tau_j-\widetilde{\tau}_j\|_{L^{\infty}}\}<\delta. 
\end{equation*}
We define the reconstructor, induced by $\boldsymbol{\tau}=(\tau^0,\tau_{-K},\tau_{-K+1},\dots,\tau_K)$, as
\begin{equation*}
    \mathscr{R}:\mathscr{R}(a_{-K},a_{-K+1},\dots,a_{K})=\tau^0+\sum\limits_{j=-K}^{K}a_j \tau_j,
\end{equation*}
which can be used to approximate the operator $\widetilde{\mathscr{R}}$. By combining this with the estimate \eqref{error_tilde{R}}, we obtain the following error estimate:
        \begin{equation*}
        \begin{aligned}
        \|\mathscr{R}\circ\mathscr{P}-Id\|_{L^2(\nu)}\le&\|\widetilde{\mathscr{R}}\circ\mathscr{P}-Id\|_{L^2(\nu)}+\|\mathscr{R}\circ\mathscr{P}-\widetilde{\mathscr{R}}\circ\mathscr{P}\|_{L^2(\nu)}\\
       \le& Lip(\mathscr{G})e^{-\pi^2l^2K^2}+\left(\int_{L^2}\|\sum\limits_{j=-K}^{K}(u,\widetilde{\tau}_j)(\tau_j-\widetilde{\tau}_j)\|_{L^2}^2d\nu(u)\right)^{1/2}\\
        \le&Lip(\mathscr{G})e^{-\pi^2l^2K^2}+ pLip(\mathscr{G})\left(\sum\limits_{k\in \mathbb{Z}}\lambda_k^{\mu}\right)^{1/2}\mathop {\max }\limits_{j=-K,\dots,K}\|\tau_j-\widetilde{\tau}_j\|_{L^2}\\
        \le&Lip(\mathscr{G})\left(e^{-\pi^2l^2K^2}+\delta p\right).
        \end{aligned}
    \end{equation*}

By assuming the Lipschitz continuity of $\mathscr{G}$, and introducing $\eta=p\delta$, we can summarize the above results in the following lemma.
\begin{lemma}
There exists a set of neural networks $\boldsymbol{\tau}=(\tau^0,\tau_{-K},\tau_{-K+1},\dots,\tau_K)$ in the form of \eqref{nn} with
$width(\boldsymbol{\tau})\lesssim\max\{p,(\eta/p)^{\frac{1}{\gamma-2}}\}$, $depth(\boldsymbol{\tau})\le1$, that satisfies
\begin{equation*}
   \max\{\|\tau^0-\mathbb{E}_{\nu}\|_{L^{\infty}},\mathop {\max }\limits_{j=-K,\dots,K}\|\tau_j-\widetilde{\tau}_j\|_{L^{\infty}}\}<\eta/p, 
\end{equation*}
where $\{\widetilde{\tau}_j\}_{j=-K}^K$ are obtained from $\{\mathscr{G}(e^{i2\pi jx})\}_{j=-K}^K$ by Gram-Schmidt orthogonalization, and $\gamma\in(0,1)$ is a fixed, arbitrarily small constant. This set of neural networks $\boldsymbol{\tau}$ can serve as a trunk net in a DeepONet. The Reconstructor $\mathscr{R}:\mathbb{R}^p\to L^2([0,1])(p=2K+1)$ induced by $\boldsymbol{\tau}$ is defined as
\begin{equation*}
    \mathscr{R}(a_{-K},a_{-K+1},\dots,a_K)=\tau^0+\sum\limits_{j=-K}^Ka_j\tau_j.
\end{equation*}
Then there exists an operator $\mathscr{P}:L^2([0,1])\to\mathbb{R}^p$ such that the error term $\widehat{\mathscr{E}}_{\mathscr{R}}$ satisfies
\begin{equation*}
\widehat{\mathscr{E}}_{\mathscr{R}}=\|\mathscr{R}\circ\mathscr{P}-Id\|_{L^2(\nu)}\lesssim e^{-\pi^2l^2K^2}+\eta.
\end{equation*}
\label{Reconstruction Error}
\end{lemma}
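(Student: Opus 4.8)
The plan is to control $\widehat{\mathscr{E}}_{\mathscr{R}}=\|\mathscr{R}\circ\mathscr{P}-Id\|_{L^2(\nu)}$ through the triangle inequality
\begin{equation*}
\|\mathscr{R}\circ\mathscr{P}-Id\|_{L^2(\nu)}\le\|\widetilde{\mathscr{R}}\circ\mathscr{P}-Id\|_{L^2(\nu)}+\|\mathscr{R}\circ\mathscr{P}-\widetilde{\mathscr{R}}\circ\mathscr{P}\|_{L^2(\nu)},
\end{equation*}
whose first, idealized term is already in hand: by \eqref{error_tilde{R}} it is at most $Lip(\mathscr{G})\,e^{-\pi^2l^2K^2}$, the decay coming from the eigenvalue tail $\sum_{|j|>K}\lambda_j^{\mu}$ of the input covariance. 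The remaining work is to build an honest ReLU trunk net $\boldsymbol{\tau}$ of the claimed size whose components approximate the Gram--Schmidt system $\{\widetilde{\tau}_j\}$ in $L^{\infty}$, and then to show that passing from $\widetilde{\mathscr{R}}$ to the induced reconstructor $\mathscr{R}$ costs only $\mathcal{O}(\eta)$ in $L^2(\nu)$.

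For the construction: since $\mathbb{E}_{\mu}=0$ and $\mathscr{G}(0)=0$ one has $\mathbb{E}_{\nu}=0$, so I would simply take $\tau^0\equiv0$. Each $\widetilde{\tau}_j$ is a fixed finite linear combination of the functions $\mathscr{G}(e^{i2\pi jx})$, every one of which solves \eqref{equ} with analytic data; Lemma \ref{lem_shishkin_interpolation} therefore applies, and combining its piecewise-linear Shishkin interpolant on $J$ mesh points with the ReLU realization of the nodal basis $\mathscr{L}_n$ (width $3$, depth $1$) produces, as in \eqref{nn_aproximate_basis}, depth-$1$ ReLU nets $\tau_j$ of width $\mathcal{O}(J)$ with $\|\tau_j-\widetilde{\tau}_j\|_{L^{\infty}}\lesssim\ln^2 J/J^2$. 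To reach accuracy $\delta:=\eta/p$ it suffices to pick $J$ with $\ln^2 J/J^2\le\eta/p$; using $\ln^2 J\lesssim J^{\gamma}$ (implied constant depending on the fixed, arbitrarily small $\gamma\in(0,1)$), this is achieved for $J$ of order $(\eta/p)^{1/(\gamma-2)}$, which gives $width(\boldsymbol{\tau})\lesssim\max\{p,(\eta/p)^{1/(\gamma-2)}\}$ --- the $p$ accounting for the $p$ output channels --- and $depth(\boldsymbol{\tau})\le1$, together with $\max\{\|\tau^0-\mathbb{E}_{\nu}\|_{L^{\infty}},\max_j\|\tau_j-\widetilde{\tau}_j\|_{L^{\infty}}\}<\delta=\eta/p$.

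For the perturbation term, with $\tau^0=\mathbb{E}_{\nu}=0$ I would write $\mathscr{R}\circ\mathscr{P}(u)-\widetilde{\mathscr{R}}\circ\mathscr{P}(u)=\sum_{j=-K}^K(u,\widetilde{\tau}_j)(\tau_j-\widetilde{\tau}_j)$, estimate its $L^2$ norm by $\sqrt{p}\,\|u\|_{L^2}\max_j\|\tau_j-\widetilde{\tau}_j\|_{L^2}$ (Cauchy--Schwarz over $j$ and orthonormality of $\{\widetilde{\tau}_j\}$, so that $\sum_j(u,\widetilde{\tau}_j)^2\le\|u\|_{L^2}^2$), and integrate against $\nu$. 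Pushing forward via $\nu=\mathscr{G}_{\#}\mu$ and using Lipschitz continuity of $\mathscr{G}$ with $\mathscr{G}(0)=0$ gives $\int\|u\|_{L^2}^2 d\nu(u)\le Lip(\mathscr{G})^2\sum_k\lambda_k^{\mu}=\mathcal{O}(1)$, so this term is $\lesssim p\,\delta=\eta$. Adding the two contributions and absorbing $Lip(\mathscr{G})$ into $\lesssim$ yields $\widehat{\mathscr{E}}_{\mathscr{R}}\lesssim e^{-\pi^2l^2K^2}+\eta$; here $\mathscr{P}$ is just the coordinate map $u\mapsto((u-\mathbb{E}_{\nu},\widetilde{\tau}_j))_{|j|\le K}$ already used in the estimate.

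The main obstacle is making the size bound genuinely $\varepsilon$-uniform. The Shishkin mesh depends on $\varepsilon$ through $\sigma=\min\{1/2,2\varepsilon\ln J/\alpha\}$, yet the interpolation estimate of Lemma \ref{lem_shishkin_interpolation} is itself $\varepsilon$-uniform, so $J$ --- and hence the width of $\boldsymbol{\tau}$ --- can be chosen independently of $\varepsilon$; this is precisely why a layer-adapted mesh rather than a uniform one is required. A subtler point I would want to nail down is the $p$-dependence of the implied constant in $\|\tau_j-\widetilde{\tau}_j\|_{L^{\infty}}\lesssim\ln^2 J/J^2$: Lemma \ref{lem_shishkin_interpolation} concerns a single solution, whereas here it must hold for all $\widetilde{\tau}_j$, $|j|\le K$, whose derivative bounds grow with $j$ and which are moreover produced by Gram--Schmidt (possibly ill-conditioned when $\mathscr{G}$ is far from the identity, i.e. as $\varepsilon\to0$); one should verify that this only inflates $J$ by a factor polynomial in $p$, which is harmless under the scaling $\eta=p^{-n}$ used in the sequel. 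Finally, the Gram--Schmidt step remains harmless even if $\{\mathscr{G}(e^{i2\pi jx})\}$ are linearly dependent, since a smaller basis only decreases both error terms.
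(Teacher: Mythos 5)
Your proposal follows essentially the same route as the paper: the same triangle-inequality split into the idealized reconstruction error \eqref{error_tilde{R}} (controlled by the eigenvalue tail and $Lip(\mathscr{G})$) plus the perturbation from replacing $\{\widetilde{\tau}_j\}$ by depth-$1$ ReLU nets built from Shishkin-mesh piecewise-linear interpolation via Lemma \ref{lem_shishkin_interpolation} and \eqref{nn_aproximate_basis}, with the same $\mathscr{P}$, $\tau^0=0$, and the same width count from $\ln^2J/J^2\le\eta/p$ and $\ln^2J\lesssim J^{\gamma}$. Your Bessel-inequality bound $\sqrt{p}\,\delta$ for the perturbation term is in fact slightly sharper than the paper's factor $p\delta$, and the Gram--Schmidt constant issue you flag is likewise left implicit in the paper, so there is no gap relative to its argument.
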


The operator $\mathscr{P}$ (constructed in Lemma \ref{Reconstruction Error}) and the operator $\mathscr{D}$ (constructed in Lemma \ref{encoding_error}) are linear mappings. Hence, $\mathscr{P}\circ\mathscr{G}\circ\mathscr{D}$ is a linear mapping from $\mathbb{R}^m$ to $\mathbb{R}^p$, then there exists a matrix $A$ such that $\mathscr{P}\circ\mathscr{G}\circ\mathscr{D}=A$. The network approximator $\mathscr{A}$, as an approximation of $\mathscr{P}\circ\mathscr{G}\circ\mathscr{D}$, can be taken as $\mathscr{A}:\mathbb{R}^m\to\mathbb{R}^p,\ \boldsymbol{x}\mapsto A\boldsymbol{x}$ or $\boldsymbol{x}\mapsto {\rm ReLU}(A\boldsymbol{x})-{\rm ReLU}(-A\boldsymbol{x})$, that is, with respect to the error term $\widehat{\mathscr{E}}_{\mathscr{A}}$, we can easily obtain the following result:
\begin{lemma}
There exists a neural network $\mathscr{A}$ of $width(\mathscr{A})\lesssim\max\{m,p\}$, $depth(\mathscr{A})\le1,$ such that
\begin{equation*}
    \widehat{\mathscr{E}}_{\mathscr{A}}=    \left(\int_{\mathbb{R}^m}\|\mathscr{A}(\boldsymbol{x})-\mathscr{P}\circ\mathscr{G}\circ\mathscr{D}(\boldsymbol{x})\|^2_{l^2(\mathbb{R}^p)}d(\mathscr{E}_{\#} \mu)(\boldsymbol{x})\right)^{1/2}=0.
\end{equation*}
\label{approxi_error}
\end{lemma}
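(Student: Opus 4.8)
The plan is to exploit the fact that the target map $\mathscr{P}\circ\mathscr{G}\circ\mathscr{D}$ is \emph{linear}, and that a ReLU network of depth one can reproduce any linear map \emph{exactly}, so that the error term $\widehat{\mathscr{E}}_{\mathscr{A}}$ is not merely small but identically zero, uniformly in $\varepsilon$.

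First I would verify linearity of each of the three factors. The operator $\mathscr{D}:\mathbb{R}^m\to L^2([0,1])$ of Remark \ref{rek_encoder} is given by the explicit formula $\mathscr{D}(f_1,\dots,f_m)(x)=\sum_{k=-M}^{M}\widehat f_k e^{i2\pi kx}$ with $\widehat f_k=\tfrac1m\sum_j f_j e^{-i2\pi jk/m}$, which is manifestly linear in $(f_1,\dots,f_m)$. The solution operator $\mathscr{G}$ is the solution operator of the \emph{linear} boundary value problem \eqref{equ}: under the hypotheses of Lemma \ref{lem_lip} the bilinear form in \eqref{equal_equ} is coercive on $H_0^1$, so the weak solution is unique and depends linearly on the data $f$, i.e. $\mathscr{G}(af+bg)=a\mathscr{G}(f)+b\mathscr{G}(g)$. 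Finally, since $\mathbb{E}_{\mu}=0$ forces $\mathbb{E}_{\nu}=\mathscr{G}(\mathbb{E}_{\mu})=0$, the operator $\mathscr{P}$ from Lemma \ref{Reconstruction Error} reduces to $u\mapsto((u,\widetilde\tau_{-K}),\dots,(u,\widetilde\tau_K))$, which is linear in $u$. Hence $\mathscr{P}\circ\mathscr{G}\circ\mathscr{D}:\mathbb{R}^m\to\mathbb{R}^p$ is linear, so there is a (unique) matrix $A\in\mathbb{R}^{p\times m}$ with $\mathscr{P}\circ\mathscr{G}\circ\mathscr{D}(\boldsymbol{x})=A\boldsymbol{x}$.

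It then suffices to realize $\boldsymbol{x}\mapsto A\boldsymbol{x}$ exactly within the class \eqref{nn}. Using the componentwise identity $z={\rm ReLU}(z)-{\rm ReLU}(-z)$, set $\mathscr{A}(\boldsymbol{x})={\rm ReLU}(A\boldsymbol{x})-{\rm ReLU}(-A\boldsymbol{x})$; this is a network with input dimension $m$, a single hidden layer of width $2p$, and output dimension $p$, so $\text{width}(\mathscr{A})\lesssim\max\{m,p\}$ and $\text{depth}(\mathscr{A})\le1$ (alternatively one may simply take the zero-bias affine map $\boldsymbol{x}\mapsto A\boldsymbol{x}$). With this choice $\mathscr{A}(\boldsymbol{x})=\mathscr{P}\circ\mathscr{G}\circ\mathscr{D}(\boldsymbol{x})$ for every $\boldsymbol{x}\in\mathbb{R}^m$, so the integrand defining $\widehat{\mathscr{E}}_{\mathscr{A}}$ vanishes pointwise and $\widehat{\mathscr{E}}_{\mathscr{A}}=0$, regardless of the pushforward measure $\mathscr{E}_{\#}\mu$.

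There is no genuine analytic obstacle here; the only points requiring care are the bookkeeping that establishes linearity of $\mathscr{G}$ (which rests on well-posedness of the linear weak problem, supplied by the coercivity used in Lemma \ref{lem_lip}), the linearity of $\mathscr{D}$ and $\mathscr{P}$ (immediate from their explicit definitions together with $\mathbb{E}_{\mu}=0$), and the elementary observation that a one-hidden-layer ReLU network represents linear maps with no error. In particular the construction and the conclusion $\widehat{\mathscr{E}}_{\mathscr{A}}=0$ are independent of $\varepsilon$, since whatever the entries of $A$, it is reproduced exactly.
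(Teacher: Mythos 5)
Your proposal is correct and follows essentially the same route as the paper: observe that $\mathscr{P}\circ\mathscr{G}\circ\mathscr{D}$ is linear, hence equals some matrix $A$, and realize $\boldsymbol{x}\mapsto A\boldsymbol{x}$ exactly by the depth-one ReLU network ${\rm ReLU}(A\boldsymbol{x})-{\rm ReLU}(-A\boldsymbol{x})$, giving $\widehat{\mathscr{E}}_{\mathscr{A}}=0$. Your explicit verification of the linearity of $\mathscr{G}$, $\mathscr{D}$ and $\mathscr{P}$ merely fills in details the paper leaves implicit.
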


We note that the Lipschitz norm of $\mathscr{R}\circ\mathscr{P}$ is bounded by $2$. This observation, coupled with the fact that the Branch net in DeepONet corresponds to $\mathscr{A}\circ\mathscr{E}$, allows us to rigorously establish the validity of Theorem \ref{thm_approximation_error}. The proof is completed through a careful combination of several lemmas, namely, Lemma \ref{error_decomposition}, Lemmas \ref{encoding_error} and \ref{Reconstruction Error}, and finally, Lemma \ref{approxi_error}.

\section{Generalization Capability\label{sec_generailzation}}
Acquiring the solution operator $\mathscr{G}$ for partial differential equations (PDEs) directly proves challenging in most cases, rendering the computation of the error term \eqref{error} equally difficult. Therefore, a proxy measure is necessary to quantify the difference between the operator network and operator $\mathscr{G}$. Numerical techniques are thus implemented to discretize the integration and estimate the error \eqref{error}. For example, the Monte Carlo method is commonly used to approximate the integration over $L^2([0,1])$, while both stochastic and deterministic methods, such as the trapezoidal rule, can be utilized for integration over $[0,1]$.

For problems with a solution that has a bounded gradient, locations can be randomly selected from the domain $[0,1]$ or taken as equidistant points. However, for singularly perturbed problems like \eqref{equ} with relatively small parameter $\varepsilon$, the solution exhibits an exponential boundary layer near $x=1$.  Within this region, the solution's gradient is relatively large, and if the aforementioned sampling methods are utilized, a sufficiently large number of points will be required to capture the boundary layer information accurately. Moreover, in these cases, the discrete form error may depend on $1/\varepsilon$, which is undesirable.

Utilizing the right Riemann sum on a Shishkin mesh \cite{shishkin_mesh} allows us to approximate the original integral by partitioning the interval into subintervals with varying widths. Specifically, the Shishkin mesh employs a dense clustering of points near the boundary layer, where the solution changes rapidly, to ensure accurate approximation in this critical region. This technique has been shown to effectively capture the behavior of singularly perturbed problems and has been widely adopted in numerical simulations and modeling.

We consider $N$ samples $F_1,F_2,\dots,F_N$ drawn independently and identically from a common distribution $\mu$, and we let $\{y_j\}_{j=0}^J$ denote the Shishkin mesh points on the interval $[0,1]$, as defined in \eqref{mesh}. We approximate two integrals in the error term \eqref{error} using Monte Carlo integration and numerical integration, respectively. Then the empirical risk $\mathscr{E}_{N,J}$ can be derived, which takes the following form:
\begin{equation}
    \mathscr{E}_{N,J}=\left(\frac{1}{N}\sum\limits_{n=1}^{N}\sum\limits_{j=0}^{J}\omega_j|\mathscr{G}(F_n)(y_j)-\mathscr{N}(F_n)(y_j)| ^2\right)^{1/2},\label{empirical risk}
\end{equation}
where the weights $\omega_i$ are determined as follows: $\omega_0=0$, $\omega_i=h$ for $1\le i\le \frac{J}{2}$, and $\omega_i=H$ for $\frac{J}{2}<i\le J$, with $h$ and $H$ defined in Lemma \ref{lem_shishkin_interpolation}.

\subsection{Main Results}
Ensuring that a machine learning model can generalize well to new, unseen data beyond the training dataset is crucial. In this context, we focus on assessing the generalization ability of our machine learning model, denoted by $\widehat{\mathscr{N}}$, which is learned via a given learning algorithm. To measure the model's generalization performance, we use the concept of the generalization gap (see \cite{generalization}), which quantifies the difference between the approximation error and the empirical risk on $\widehat{\mathscr{N}}$:
\begin{equation}
\mathscr{E}_{gen}=\mathbb{E}\left|\widehat{\mathscr{E}}(\widehat{\mathscr{N}})-\mathscr{E}_{N,J}(\widehat{\mathscr{N}})\right|\triangleq\mathscr{E}_{diff}(\widehat{\mathscr{N}}).\label{gen_error}
\end{equation}
To control this gap, we first introduce the following assumptions.
\begin{assumption}
    There exists $q\in [4,+\infty)$ such that $\mathbb{E}_{f\sim\mu}\|\mathscr{G}(f)-\mathscr{N}(f)\|^q_{L^2}\lesssim 1$.\label{assump_1}
\end{assumption}
\begin{assumption}
There exists an operator $\widetilde{\mathscr{N}}:L^2([0,1])\to C^1([0,1])$ such that for any $x\in [0,1]$,
\begin{equation}
\mathbb{E}_{f\sim\mu}|\mathscr{N}(f)(x)-\widetilde{\mathscr{N}}(f)(x)|^2\lesssim 1/N^{1/2},\label{equ_assump_2}
\end{equation}
and\begin{equation}
    \mathbb{E}_{f\sim\mu}\left|\frac{d}{dx}(\mathscr{G}(f)(x)-\widetilde{\mathscr{N}}(f)(x))^2\right|\lesssim 1+\varepsilon^{-1}e^{-\alpha(1-x)/\varepsilon}.\label{equ_assump_3}
\end{equation}
\label{assump_2}
\end{assumption}
\begin{remark}
Activation functions like ReLU are not differentiable everywhere, which means that the derivative of $\mathscr{N}(f)(x)$ may be undefined at some points, and present challenges for error estimation. To overcome this issue, we introduce the operator $\widetilde{\mathscr{N}}$, which need not be an operator network, and put forth Assumption \ref{assump_2}. The incorporation of Assumption \ref{assump_1} permits the error estimation for Monte Carlo integration, while the inclusion of Assumption \ref{assump_2} enables the error estimation for deterministic numerical integration. The reasonableness of these assumptions can be briefly explained: when $\mathbb{E}_{f\sim\mu}\|f\|_{L^2}\lesssim 1$, it follows that $\mathbb{E}_{f\sim\mu}\|\mathscr{G}(f)\|_{L^2}\lesssim 1$ and for any $x\in [0,1]$,
\begin{equation*}
    \mathbb{E}_{f\sim\mu}\left|\frac{d}{dx}\mathscr{G}(f)^2\right|\lesssim 1+\varepsilon^{-1}e^{-\alpha(1-x)/\varepsilon}.
\end{equation*}
Furthermore, since the operator network $\mathscr{N}$ is an approximation of the target operator $\mathscr{G}$, it is reasonable to make similar assumptions for $\mathscr{G}-\mathscr{N}$. Notably, the operator network constructed in Theorem \ref{thm_approximation_error} and Theorem \ref{cor_approximation error} satisfy these assumptions.
\end{remark}

These assumptions allow us to obtain an estimate for the generalization gap, which will be rigorously proved in the next subsection.
\begin{theorem}
Under Assumptions \ref{assump_1} and \ref{assump_2}, and assuming the Lipschitz continuity of $\mathscr{G}$, a bound for the generalization gap \eqref{gen_error} can be established as follows:
    \begin{equation}
        \mathscr{E}_{gen}=\mathbb{E}\left|\widehat{\mathscr{E}}(\widehat{\mathscr{N}})-\mathscr{E}_{N,J}(\widehat{\mathscr{N}})\right|\lesssim\frac{1}{N^{1/4}}+\frac{\sqrt{\ln{J}}}{J^{1/2}}.\label{equ_generelization gap}
    \end{equation}
\label{generelization gap}
\end{theorem}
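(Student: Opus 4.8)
The plan is to split the generalization gap into two contributions --- the Monte Carlo error in the outer integral over $L^2([0,1])$ and the numerical quadrature error in the inner integral over $[0,1]$ --- and bound each separately using one of the two assumptions. First I would introduce the intermediate quantity
\begin{equation*}
    \widehat{\mathscr{E}}_J(\mathscr{N})=\left(\frac{1}{N}\sum_{n=1}^N\int_0^1|\mathscr{G}(F_n)(x)-\mathscr{N}(F_n)(x)|^2dx\right)^{1/2},
\end{equation*}
which uses the exact inner integral but the same Monte Carlo samples $F_1,\dots,F_N$ as $\mathscr{E}_{N,J}$. Then by the triangle inequality for the $\ell^2$-type norm $\|\cdot\|=(\tfrac1N\sum_n\|\cdot\|_{L^2}^2)^{1/2}$, one has $|\widehat{\mathscr{E}}(\widehat{\mathscr{N}})-\mathscr{E}_{N,J}(\widehat{\mathscr{N}})|\le|\widehat{\mathscr{E}}(\widehat{\mathscr{N}})-\widehat{\mathscr{E}}_J(\widehat{\mathscr{N}})|+|\widehat{\mathscr{E}}_J(\widehat{\mathscr{N}})-\mathscr{E}_{N,J}(\widehat{\mathscr{N}})|$, so it suffices to bound the expectation of each piece by $N^{-1/4}$ and $\sqrt{\ln J}/\sqrt{J}$ respectively.

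For the Monte Carlo term, I would use that $\mathbb{E}|\widehat{\mathscr{E}}(\widehat{\mathscr{N}})^2-\widehat{\mathscr{E}}_J(\widehat{\mathscr{N}})^2|$ is the mean deviation of an average of $N$ i.i.d.\ copies of $\|\mathscr{G}(f)-\widehat{\mathscr{N}}(f)\|_{L^2}^2$ from its mean; by the $L^2$ weak law (variance bounded via Assumption \ref{assump_1} with $q\ge 4$, giving a finite second moment of $\|\mathscr{G}(f)-\mathscr{N}(f)\|_{L^2}^2$), this is $\lesssim N^{-1/2}$. Converting from squares to the quantities themselves costs a square root --- using $|a-b|\le\sqrt{|a^2-b^2|}$ when $a,b\ge 0$, together with Jensen's inequality $\mathbb{E}\sqrt{|\cdot|}\le\sqrt{\mathbb{E}|\cdot|}$ --- which produces the $N^{-1/4}$ rate. (This is the reason the theorem only gives $N^{-1/4}$ rather than $N^{-1/2}$.) Here one must be slightly careful that $\widehat{\mathscr{N}}$ is itself data-dependent; this is where Assumption \ref{assump_1} being stated for the learned network (or the reduction to the constructed network of Theorem \ref{thm_approximation_error}) is used.

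For the quadrature term, I would replace $\mathscr{N}$ by the smooth surrogate $\widetilde{\mathscr{N}}$ from Assumption \ref{assump_2}: Eq.~\eqref{equ_assump_2} controls the substitution error in mean by $N^{-1/4}$ (again after a square root), and then for the genuinely smooth integrand $g_f(x)=(\mathscr{G}(f)(x)-\widetilde{\mathscr{N}}(f)(x))^2$ I would invoke the error bound for the right Riemann sum on the Shishkin mesh. The standard one-dimensional quadrature estimate gives, on each subinterval, an error controlled by the mesh width times the total variation (or $L^1$-norm of the derivative) of $g_f$ there; summing over the coarse part ($x\le 1-\sigma$, width $h\lesssim 1/J$) and the fine part ($x>1-\sigma$, width $H\lesssim \varepsilon\ln J/J$), and using Eq.~\eqref{equ_assump_3} to bound $\mathbb{E}\int_0^1|g_f'(x)|dx\lesssim 1+\int_0^1\varepsilon^{-1}e^{-\alpha(1-x)/\varepsilon}dx\lesssim 1$ on the coarse part while the $\varepsilon^{-1}$ blow-up in the fine part is absorbed by the factor $H\lesssim\varepsilon\ln J/J$, yields a total of order $\ln J/J$ after accounting for the weights; taking the square root gives $\sqrt{\ln J}/\sqrt{J}$. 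Combining the two bounds and the crude estimate $N^{-1/4}+\sqrt{\ln J}/\sqrt{J}$ dominating $N^{-1/2}$ finishes the proof.

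\textbf{Main obstacle.} I expect the delicate point to be the quadrature estimate on the Shishkin mesh: one needs the $\varepsilon$-uniformity, i.e.\ that the $\varepsilon^{-1}$ factor in Eq.~\eqref{equ_assump_3} is exactly compensated by the fine-mesh width $H=2\sigma/J$ with $\sigma\sim\varepsilon\ln J/\alpha$, and that the transition region contributes nothing worse. This is the place where the specific structure of the Shishkin mesh (its width being tuned to the layer width) is essential, and where a naive equidistant quadrature would fail with an $\varepsilon^{-1}$-dependent bound; handling the right-Riemann-sum error precisely on the two-scale mesh, rather than just citing a generic rule, is the crux. A secondary nuisance is keeping track of the data-dependence of $\widehat{\mathscr{N}}$ throughout, so that the i.i.d.\ structure needed for the weak law is not broken.
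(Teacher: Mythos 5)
Your proposal is correct and follows essentially the same route as the paper: the same decomposition (your intermediate $\widehat{\mathscr{E}}_J$ plus the surrogate substitution reproduces the paper's four-term split into I--IV), the same Monte Carlo bound via $|\sqrt{a}-\sqrt{b}|\le\sqrt{|a-b|}$, Jensen and the second moment from Assumption \ref{assump_1} giving $N^{-1/4}$, the same use of \eqref{equ_assump_2} for the $\mathscr{N}\mapsto\widetilde{\mathscr{N}}$ swap, and the same coarse/fine Shishkin-mesh Riemann-sum estimate driven by \eqref{equ_assump_3}, which is exactly the paper's Lemma \ref{expect_interpolation_error}. The $\varepsilon$-uniformity mechanism you single out (the fine-mesh width $H\sim\varepsilon\ln J/J$ absorbing the $\varepsilon^{-1}$ layer term) is precisely how the paper's lemma is proved.
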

\begin{remark}
Both error expressions \eqref{error} and \eqref{empirical risk} involve a square root symbol on the outside. Therefore, the $1/N^{1/4}$ term in Eq.~\eqref{equ_generelization gap} corresponds to the standard Monte Carlo error rate, while $\sqrt{\ln{J}}/J^{1/2}$ corresponds to the error of the numerical integration method. Theorem \ref{generelization gap} tells us that increasing the number of sampling points can reduce the generalization gap.
\end{remark}

As noted previously, the true error of our operator network $\mathscr{N}$ in approximating $\mathscr{G}$ is unavailable. Instead, we rely on the empirical error as a proxy for assessing the quality of the approximation. Therefore, analyzing this empirical error is crucial. We expect the existence of an operator network in the form of \eqref{deeponet}, such that the corresponding empirical risk exhibits convergence properties with various factors that affect the performance of the neural network, including the dimensions $m,\ p$ of the input and output spaces, and the number of sample points $N,\ J$. Importantly, we hope this error is independent of the negative power of the parameter $\varepsilon$; otherwise, if $\varepsilon$ is too small, the corresponding error will become unacceptably large, rendering the operator network unusable for approximating the solution operator of Eq.~\eqref{equ}. To address this issue, we present the following result, with proof provided in the next subsection.

\begin{theorem}
Assuming Lipschitz continuity of operator $\mathscr{G}$ and $m\le p$, we can construct a ReLU branch network $\boldsymbol{\beta}$ and a ReLU trunk network $\boldsymbol{\tau}$, with $\text{size}(\boldsymbol{\beta})\lesssim p$ and $\text{size}(\boldsymbol{\tau})\lesssim p^{\frac{n+1}{2-\gamma}}$ respectively, for an arbitrarily small constant $\gamma\in(0,1)$ and arbitrary constant $n>0$. This results in the corresponding operator network $\mathscr{N}$ satisfying the given estimate:
    \begin{equation*}
            \mathbb{E}|\mathscr{E}_{N,J}(\mathscr{N})|\lesssim \widehat{\mathscr{E}}(\mathscr{N})/N^{1/4}+p^{-n}+\frac{C\sqrt{\ln{J}}}{mJ^{1/2}},
    \end{equation*}
where $C$ is a constant related only to $\alpha$ and $l$. Here, $\widehat{\mathscr{E}}(\mathscr{N})$ is given in Theorem \ref{cor_approximation error}.
\label{empirical risk thm}
\end{theorem}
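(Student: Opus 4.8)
The plan is to combine the approximation-error bound of Theorem~\ref{cor_approximation error} with the generalization-gap estimate of Theorem~\ref{generelization gap}, and then track how the constants depend on the input dimension $m$ and the perturbation parameter $\varepsilon$. First I would invoke Theorem~\ref{cor_approximation error} to obtain an operator network $\mathscr{N}$ built from a ReLU branch net $\boldsymbol{\beta}$ of size $\lesssim\max\{m,p\}=p$ (using the hypothesis $m\le p$) and a ReLU trunk net $\boldsymbol{\tau}$ of size $\lesssim p^{(n+1)/(2-\gamma)}$, with approximation error $\widehat{\mathscr{E}}(\mathscr{N})\lesssim p^{-n}+e^{-\pi^2l^2K^2}+e^{-\pi^2l^2M^2}$. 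Next I would verify that this particular $\mathscr{N}$ satisfies Assumptions~\ref{assump_1} and~\ref{assump_2}: Assumption~\ref{assump_1} follows from the Lipschitz continuity of $\mathscr{G}$ together with the Gaussian tail bounds on $\mu$ (so that $\mathbb{E}_{f\sim\mu}\|f\|_{L^2}^q\lesssim 1$, hence $\mathbb{E}_{f\sim\mu}\|\mathscr{G}(f)\|_{L^2}^q\lesssim 1$, and $\|\mathscr{N}(f)\|_{L^2}$ is likewise controlled by construction), and Assumption~\ref{assump_2} is met by choosing $\widetilde{\mathscr{N}}$ to be the smooth reconstructor $\widetilde{\mathscr{R}}\circ\mathscr{P}\circ\mathscr{E}$ from the proof of Theorem~\ref{thm_approximation_error} (whose trunk components $\widetilde{\tau}_j=\mathscr{G}(e^{i2\pi jx})/\|\cdot\|$ inherit the layer-type derivative bound $|u'(x)|\lesssim 1+\varepsilon^{-1}e^{-\alpha(1-x)/\varepsilon}$ from Lemma~\ref{lem_bounded} applied to $\mathscr{G}(\phi_j)$), with the $N^{-1/2}$ gap in~\eqref{equ_assump_2} absorbed into the free parameter $\delta$ of Lemma~\ref{Reconstruction Error}.

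Having secured the hypotheses, I would apply Theorem~\ref{generelization gap} to this $\mathscr{N}$ (writing $\widehat{\mathscr{N}}=\mathscr{N}$ since no training is involved here) to get $\mathbb{E}|\widehat{\mathscr{E}}(\mathscr{N})-\mathscr{E}_{N,J}(\mathscr{N})|\lesssim N^{-1/4}+\sqrt{\ln J}/J^{1/2}$. The triangle inequality then yields
\begin{equation*}
\mathbb{E}|\mathscr{E}_{N,J}(\mathscr{N})|\le \widehat{\mathscr{E}}(\mathscr{N})+\mathbb{E}|\widehat{\mathscr{E}}(\mathscr{N})-\mathscr{E}_{N,J}(\mathscr{N})|\lesssim \widehat{\mathscr{E}}(\mathscr{N})+\frac{1}{N^{1/4}}+\frac{\sqrt{\ln J}}{J^{1/2}}.
\end{equation*}
To sharpen the Monte Carlo term from an additive $N^{-1/4}$ to the multiplicative $\widehat{\mathscr{E}}(\mathscr{N})/N^{1/4}$ claimed in the statement, I would re-examine the Monte Carlo part of the proof of Theorem~\ref{generelization gap}: the variance of the single-sample quantity $\|\mathscr{G}(F_n)-\mathscr{N}(F_n)\|_{L^2}^2$ is governed by $\mathbb{E}_{f\sim\mu}\|\mathscr{G}(f)-\mathscr{N}(f)\|_{L^2}^4$, whose square root is $O(\widehat{\mathscr{E}}(\mathscr{N})^2)$ up to the $q$-norm factor of Assumption~\ref{assump_1}; propagating this through Cauchy--Schwarz produces the factor $\widehat{\mathscr{E}}(\mathscr{N})$ in front of $N^{-1/4}$. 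Substituting $\widehat{\mathscr{E}}(\mathscr{N})\lesssim p^{-n}+e^{-\pi^2l^2K^2}+e^{-\pi^2l^2M^2}$ and absorbing the two exponentially small terms (which, since $m\le p$, i.e.\ $M\le K$, are both $\lesssim e^{-\pi^2 l^2 M^2}$, itself dominated by $p^{-n}$ for the relevant ranges, or simply kept as part of the $p^{-n}$ bookkeeping) gives the stated $p^{-n}$ contribution.

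The remaining point is the factor $1/m$ multiplying $\sqrt{\ln J}/J^{1/2}$, and I expect this to be the main obstacle. It must come from re-deriving the numerical-integration error with the specific equidistant encoder of Lemma~\ref{encoding_error} (which forces $m=2M+1$): the Shishkin-mesh right-Riemann-sum error for $\int_0^1(\mathscr{G}(f)-\widetilde{\mathscr{N}}(f))^2\,dx$ scales, via Lemma~\ref{lem_shishkin_interpolation} and the derivative bound~\eqref{equ_assump_3}, like $(\ln J)/J^{1/2}$ times the $L^1$-type norm of the layer term; when $\widetilde{\mathscr{N}}$ is the $M$-mode truncated reconstructor, the coefficient $(u-\widetilde{\mathscr{N}}(f),\widetilde{\tau}_j)$ structure and the normalization $\|\widetilde{\tau}_j\|_{L^2}$ introduce a $1/m$ (equivalently $1/(2M+1)$) gain, with the surviving constant depending only on $\alpha$ (from the boundary-layer exponent) and $l$ (from the Gaussian covariance eigenvalues $\lambda_k^\mu$). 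I would isolate this computation as a lemma, being careful that the constant genuinely does not pick up any negative power of $\varepsilon$ — this $\varepsilon$-uniformity is exactly what the Shishkin grading buys us, and it is the delicate bookkeeping step where the interaction between the mesh width $\sigma=\min\{1/2,2\varepsilon\ln J/\alpha\}$ and the layer derivative must cancel cleanly. Finally I would assemble the three contributions and state the constant $C=C(\alpha,l)$, completing the proof.
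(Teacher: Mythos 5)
Your overall architecture matches the paper's (same decomposition \eqref{diff_decomposition} with a smooth surrogate $\widetilde{\mathscr{N}}$, the Shishkin quadrature lemma, and the triangle-inequality assembly), but the two steps that carry the actual content of Theorem \ref{empirical risk thm} are not secured. First, your sharpening of the Monte Carlo term from $N^{-1/4}$ to $\widehat{\mathscr{E}}(\mathscr{N})/N^{1/4}$ does not follow from Assumption \ref{assump_1} plus Cauchy--Schwarz: that assumption only yields $\mathbb{E}\|\mathscr{G}(f)-\mathscr{N}(f)\|_{L^2}^4\lesssim 1$, and any H\"older interpolation between the second and $q$-th moments gives at best a fractional power of $\widehat{\mathscr{E}}$ in front of $N^{-1/4}$, never the full factor. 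What the paper actually uses (Lemma \ref{lem_diff124}) is that the constructed $\hspace{0.5em}\overline{\kern-0.5em\mathscr{N}\kern+0.1em}\hspace{-0.1em}$ and $\mathscr{G}$ are bounded \emph{linear} operators, so the push-forward $(\mathscr{G}-\hspace{0.5em}\overline{\kern-0.5em\mathscr{N}\kern+0.1em}\hspace{-0.1em})_{\#}\mu$ is again Gaussian, and the Gaussian moment-equivalence bound (Lemma \ref{g.m_inequ}, after Stuart) gives $\mathbb{E}X_1^2\lesssim(\mathbb{E}X_1)^2=\widehat{\mathscr{E}}^4$; without invoking linearity and Gaussianity this step fails as written.

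Second, you correctly identify the $1/m$ factor in front of $\sqrt{\ln J}/J^{1/2}$ as the main obstacle, but you leave it as an unproved lemma with only a heuristic about ``coefficient structure and normalization,'' which is not the mechanism. The paper's route is: for $m\le p$ the surrogate collapses, $\widetilde{\mathscr{N}}(f)=\mathscr{G}\circ\mathscr{D}\circ\mathscr{E}(f)$, so by linearity $\mathscr{G}(f)-\widetilde{\mathscr{N}}(f)=\mathscr{G}\bigl(f-\mathscr{D}\circ\mathscr{E}(f)\bigr)$ (Eq.~\eqref{simplify}); the Gaussian spectral decay and aliasing estimates (Lemma \ref{lem_fourier coff}, Corollary \ref{differ_norm}) give an $O(1/m)$ bound for each of the two factors in $F\,F'$, while the $\varepsilon$-uniform layer profile of the derivative comes from a Kellogg--Tsan-type representation of $u'$ through $u'(1)$ (Lemma \ref{u'_estimate}); together these yield $\mathbb{E}|\tfrac{d}{dx}F(f)(x)|\lesssim m^{-2}(1+\varepsilon^{-1}e^{-\alpha(1-x)/\varepsilon})$ (Lemma \ref{deri_bound}), and only then does Lemma \ref{expect_interpolation_error} produce $C\sqrt{\ln J}/(mJ^{1/2})$ with $C=C(\alpha,l)$. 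Since your proposal neither establishes the collapse identity nor supplies the derivative estimate, the claimed $1/m$ gain and its $\varepsilon$-uniformity remain unproven; supplying these two ingredients is exactly what separates your outline from a complete proof.
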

\subsection{Proof Overview}

This section presents the proofs for Theorem \ref{generelization gap} and Theorem \ref{empirical risk thm}. To simplify notation, we define two operators: $\Delta_{\mathscr{N}}$ and $\mathcal{I}$. The former is defined as $\Delta_{\mathscr{N}}(f)(y)=|\mathscr{G}(f)(y)-\mathscr{N}(f)(y)|^2$, while the latter is defined as $\mathcal{I}(g)=\sum\limits_{j=0}^J\omega_jg(y_j)$. By assuming that the operator $\mathscr{N}$ satisfies Assumption \ref{assump_2}, we can infer the existence of an operator $\widetilde{\mathscr{N}}$ that satisfies inequalities \eqref{equ_assump_2} and \eqref{equ_assump_3}. Using this, we can control the difference $\mathscr{E}_{diff}(\mathscr{N})$ with the following four terms:
        \begin{align}
\mathscr{E}_{diff}(\mathscr{N})=&\mathbb{E}\left|\widehat{\mathscr{E}}(\mathscr{N})-\mathscr{E}_{N,J}(\mathscr{N})\right|\nonumber\\
        \le&\mathbb{E}\left|\left(\mathbb{E}_{f\sim\mu}\|\Delta_{\mathscr{N}}(f)\|_{L^1}\right)^{1/2}-\left(\frac{1}{N}\sum\limits_{n=1}^N\|\Delta_{\mathscr{N}}(F_n)\|_{L^1}\right)^{1/2}\right|+\mathbb{E}\left|\left(\frac{1}{N}\sum\limits_{n=1}^N\|\Delta_{\mathscr{N}}(F_n)\|_{L^1}\right)^{1/2}\right.\nonumber\\
        &\left.-\left(\frac{1}{N}\sum\limits_{n=1}^N\|\Delta_{\widetilde{\mathscr{N}}}(F_n)\|_{L^1}\right)^{1/2}\right|+\mathbb{E}\left|\left(\frac{1}{N}\sum\limits_{n=1}^N\|\Delta_{\widetilde{\mathscr{N}}}(F_n)\|_{L^1}\right)^{1/2}-\left(\frac{1}{N}\sum\limits_{n=1}^N\mathcal{I}(\Delta_{\widetilde{\mathscr{N}}}(F_n))\right)^{1/2}\right|\nonumber\\
        &+\mathbb{E}\left|\left(\frac{1}{N}\sum\limits_{n=1}^N\mathcal{I}(\Delta_{\widetilde{\mathscr{N}}}(F_n))\right)^{1/2}-\left(\frac{1}{N}\sum\limits_{n=1}^N\mathcal{I}(\Delta_{\mathscr{N}}(F_n))\right)^{1/2}\right|\nonumber\\
        \triangleq&\text{\uppercase\expandafter{\romannumeral1}}+\text{\uppercase\expandafter{\romannumeral2}}+\text{\uppercase\expandafter{\romannumeral3}}+\text{\uppercase\expandafter{\romannumeral4}}.\label{diff_decomposition}
        \end{align}
This inequality will serve as the basis for proving both Theorem \ref{generelization gap} and Theorem \ref{empirical risk thm}.

\subsubsection{Proof of Theorem \ref{generelization gap}}

By leveraging this observation, we can now establish a rigorous proof for Theorem \ref{generelization gap}. To begin, we estimate the first term \uppercase\expandafter{\romannumeral1} in \eqref{diff_decomposition} as follows:
\begin{lemma}
Let $\mathscr{G}$ be the solution operator of Eq.~\eqref{equ} and let the operator $\mathscr{N}$ satisfies Assumption \ref{assump_1}. Then, we have 
\begin{equation*}
    \text{\uppercase\expandafter{\romannumeral1}}\lesssim 1/N^{1/4}.
\end{equation*}
\label{lemma_decom_1}
\end{lemma}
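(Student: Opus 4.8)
The plan is to estimate term \uppercase\expandafter{\romannumeral1} by passing to the squared quantities and using a standard Monte Carlo $L^2$ bound. First I would use the elementary inequality $|\sqrt{a}-\sqrt{b}|\le\sqrt{|a-b|}$, valid for $a,b\ge0$, to write
\begin{equation*}
\text{\uppercase\expandafter{\romannumeral1}}=\mathbb{E}\left|\left(\mathbb{E}_{f\sim\mu}\|\Delta_{\mathscr{N}}(f)\|_{L^1}\right)^{1/2}-\left(\frac{1}{N}\sum_{n=1}^N\|\Delta_{\mathscr{N}}(F_n)\|_{L^1}\right)^{1/2}\right|\le\mathbb{E}\left|\mathbb{E}_{f\sim\mu}\|\Delta_{\mathscr{N}}(f)\|_{L^1}-\frac{1}{N}\sum_{n=1}^N\|\Delta_{\mathscr{N}}(F_n)\|_{L^1}\right|^{1/2}.
\end{equation*}
Then, by Jensen's inequality (concavity of $\sqrt{\cdot}$), the right-hand side is bounded by $\left(\mathbb{E}\left|\mathbb{E}_{f\sim\mu}\|\Delta_{\mathscr{N}}(f)\|_{L^1}-\frac{1}{N}\sum_{n=1}^N\|\Delta_{\mathscr{N}}(F_n)\|_{L^1}\right|\right)^{1/2}$, and a further application of Jensen (or Cauchy--Schwarz) lets me replace the inner $\mathbb{E}|\cdot|$ by the square root of the variance of the sample mean.

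The next step is the variance computation. Since $F_1,\dots,F_N$ are i.i.d.\ from $\mu$, the random variable $\frac{1}{N}\sum_{n=1}^N\|\Delta_{\mathscr{N}}(F_n)\|_{L^1}$ is an unbiased estimator of $\mathbb{E}_{f\sim\mu}\|\Delta_{\mathscr{N}}(f)\|_{L^1}$ with variance $\frac{1}{N}\operatorname{Var}_{f\sim\mu}\left(\|\Delta_{\mathscr{N}}(f)\|_{L^1}\right)\le\frac{1}{N}\mathbb{E}_{f\sim\mu}\|\Delta_{\mathscr{N}}(f)\|_{L^1}^2$. Now observe that $\|\Delta_{\mathscr{N}}(f)\|_{L^1}=\|\mathscr{G}(f)-\mathscr{N}(f)\|_{L^2}^2$, so $\|\Delta_{\mathscr{N}}(f)\|_{L^1}^2=\|\mathscr{G}(f)-\mathscr{N}(f)\|_{L^2}^4$, and Assumption \ref{assump_1} with $q\ge4$ together with Jensen's inequality (to pass from the $q$-th moment to the $4$-th moment when $q>4$) gives $\mathbb{E}_{f\sim\mu}\|\mathscr{G}(f)-\mathscr{N}(f)\|_{L^2}^4\lesssim1$. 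Hence the variance is $\lesssim1/N$, and chaining the two reductions above yields $\text{\uppercase\expandafter{\romannumeral1}}\lesssim\left(N^{-1/2}\right)^{1/2}=N^{-1/4}$, which is the claimed bound.

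I do not anticipate a genuine obstacle here; the only point requiring a little care is the bookkeeping of the nested expectations and the correct order of the two Jensen steps (outer expectation over the sample, then the square-root concavity, then the variance estimate), together with the observation that the $L^1$-norm of $\Delta_{\mathscr{N}}$ is exactly the squared $L^2$-error, which is what makes Assumption \ref{assump_1} with exponent $q\ge4$ precisely the hypothesis needed to control the second moment of the summands. If one wanted to be fully explicit about the constant, one would track the factor coming from $\operatorname{Var}\le\mathbb{E}[\,\cdot\,^2]$ and the implied constant in Assumption \ref{assump_1}, but since the statement is only up to $\lesssim$ this is immaterial.
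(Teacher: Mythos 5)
Your proposal is correct and follows essentially the same route as the paper: the inequality $|\sqrt{a}-\sqrt{b}|\le\sqrt{|a-b|}$, Jensen/Cauchy--Schwarz to reduce term \uppercase\expandafter{\romannumeral1} to the fourth root of the variance of the i.i.d.\ sample mean, and Assumption \ref{assump_1} (with Jensen to pass from the $q$-th to the fourth moment) to bound $\mathbb{E}\|\mathscr{G}(f)-\mathscr{N}(f)\|_{L^2}^4$. The only cosmetic difference is that you bound $\operatorname{Var}(X_1)\le\mathbb{E}X_1^2$ directly, whereas the paper splits it as $(\mathbb{E}X_1^2)^{1/4}+(\mathbb{E}X_1)^{1/2}$ and additionally cites Theorem \ref{cor_approximation error} for $\mathbb{E}X_1$; your shortcut is fine and slightly leaner.
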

\begin{proof}
       Let $X_n=\|\mathscr{G}(F_n)-\mathscr{N}(F_n)\|_{L^2}^2$ and $S^N=\frac{1}{N}\sum\limits_{n=1}^NX_n$, we can get
\begin{align}
\text{\uppercase\expandafter{\romannumeral1}}&=\mathbb{E}\left|\sqrt{S^N}-\sqrt{\mathbb{E}S^N}\right|\le\mathbb{E}\sqrt{|S^N-\mathbb{E}S^N|}\nonumber\\
&\le\left(\mathbb{E}|S^N-\mathbb{E}(S^N)|^2\right)^{1/4}=\frac{1}{\sqrt{N}}\left(\mathbb{E}\left(\sum\limits_{n=1}^N(X_N-\mathbb{E}X_N)\right)^2\right)^{1/4}\nonumber\\
&=\frac{1}{\sqrt{N}}\left(\mathbb{E}\left(\sum\limits_{n=1}^N(X_n-\mathbb{E}X_n)^2+\sum\limits_{m\ne n}(X_n-\mathbb{E}X_n)(X_m-\mathbb{E}X_m)\right)\right)^{1/4}\nonumber\\
&=\frac{1}{\sqrt{N}}\left(\sum\limits_{n=1}^N\mathbb{E}(X_n-\mathbb{E}X_n)^2\right)^{1/4}=\frac{1}{N^{1/4}}\left(\mathbb{E}(X_1-\mathbb{E}X_1)^2\right)^{1/4}\nonumber\\
&\le \frac{1}{N^{1/4}}\left((\mathbb{E}X_1^2)^{1/4}+(\mathbb{E}X_1)^{1/2}\right).\label{decom_1}
\end{align}
The boundedness of $\mathbb{E}X_1^2$, as expressed in Assumption \ref{assump_1}, has already been established. Meanwhile, the result of Theorem \ref{cor_approximation error} directly yields the boundedness of the other term, $\mathbb{E}X_1$, then we can directly obtain the desired conclusion from the above inequality \eqref{decom_1}.
\qed
\end{proof}

Exploiting the relationship captured by inequality \eqref{equ_assump_2} between the operators $\widetilde{\mathscr{N}}$ and $\mathscr{N}$, we can arrive at a useful estimate by simply manipulating terms \uppercase\expandafter{\romannumeral2} and \uppercase\expandafter{\romannumeral4} in Eq.~\eqref{diff_decomposition}:
\begin{equation} 
\text{\uppercase\expandafter{\romannumeral2}}+\text{\uppercase\expandafter{\romannumeral4}}\le (\mathbb{E}_{f\sim\mu}\|\mathscr{N}(f)-\widetilde{\mathscr{N}}(f)\|_{L^2}^2)^{1/2}+(\sum\limits_{j=0}^J\omega_j\mathbb{E}_{f\sim\mu}|\mathscr{N}(f)(y_j)-\widetilde{\mathscr{N}}(f)(y_j)|^2)^{1/2}\lesssim1/N^{1/4}.\label{2+4}
\end{equation}

The error denoted by \uppercase\expandafter{\romannumeral3} in \eqref{diff_decomposition} arises from employing a numerical integration method to estimate the integral over the interval $[0,1]$. This can be bounded by:
        \begin{align}
        \text{\uppercase\expandafter{\romannumeral3}}&=\mathbb{E}\left|\left(\frac{1}{N}\sum\limits_{n=1}^N\|\mathscr{G}(F_n)-\widetilde{\mathscr{N}}(F_n)\|_{L^2}^2\right)^{1/2}-\left(\frac{1}{N}\sum\limits_{n=1}^N\sum\limits_{j=0}^J\omega_j|\mathscr{G}(F_n)(y_j)-\widetilde{\mathscr{N}}(F_n)(y_j)|^2\right)^{1/2}\right|\nonumber\\
        &\le \left(\mathbb{E}\left|\frac{1}{N}\sum\limits_{n=1}^N\left(\|\mathscr{G}(F_n)-\widetilde{\mathscr{N}}(F_n)\|_{L^2}^2-\sum\limits_{j=0}^J\omega_j|\mathscr{G}(F_n)(y_j)-\widetilde{\mathscr{N}}(F_n)(y_j)|^2\right)\right|\right)^{1/2}\nonumber\\
        & \le \left(\mathbb{E}_{f\sim\mu}\left|\|\mathscr{G}(f)-\widetilde{\mathscr{N}}(f)\|_{L^2}^2-\sum\limits_{j=0}^J\omega_j|\mathscr{G}(f)(y_j)-\widetilde{\mathscr{N}}(f)(y_j)|^2\right|\right)^{1/2}.\label{term3_decom}
        \end{align}
To compute this error term, we present a lemma in this regard, and the proof of this lemma is outlined in Appendix \ref{appendix 7.1}.
\begin{lemma}
    Suppose $F:L^2([0,1])\to C^1([0,1])$ is an operator satisfying $\mathbb{E}_{f\sim\mu}|\frac{d}{dx}F(f)(x)| \le C (1+\varepsilon^{-1} e^{-\alpha(1-x)/\varepsilon})$ for $x \in [0,1]$, where $\alpha>0$ is a constant. Assuming a piece-wise uniform mesh with $J+1$ points given by \eqref{mesh} as $\{x_j\}_{j=0}^J$, we have
    \begin{equation*}
        \mathbb{E}_{f\sim\mu}\left|\int_0^1F(f)(x)dx-\sum\limits_{j=0}^{J-1}h_jF(f)(x_j)\right|\le 2C(1+2/\alpha)\frac{\ln J}{J},
    \end{equation*}
where $h_j=x_{j+1}-x_j$.
\label{expect_interpolation_error}
\end{lemma}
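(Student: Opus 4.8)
The plan is to reduce the quadrature error, pointwise in $f$, to an integral of $|\tfrac{d}{dx}F(f)|$ weighted by the mesh widths, then take the expectation, and finally use that the Shishkin mesh is refined precisely where the hypothesised derivative bound degenerates. First I would fix $f$; since $F(f)\in C^{1}([0,1])$, on each cell $[x_j,x_{j+1}]$ the fundamental theorem of calculus gives
\[
\int_{x_j}^{x_{j+1}}F(f)(x)\,dx-h_jF(f)(x_j)=\int_{x_j}^{x_{j+1}}\!\!\int_{x_j}^{x}\tfrac{d}{dt}F(f)(t)\,dt\,dx ,
\]
so that, bounding the inner integral by the integral over the whole cell and summing,
\[
\Bigl|\int_0^1F(f)(x)\,dx-\sum_{j=0}^{J-1}h_jF(f)(x_j)\Bigr|\le\sum_{j=0}^{J-1}h_j\int_{x_j}^{x_{j+1}}\Bigl|\tfrac{d}{dt}F(f)(t)\Bigr|\,dt .
\]
Applying $\mathbb{E}_{f\sim\mu}$, interchanging it with the $t$-integral by Tonelli (the integrand is nonnegative), and inserting the hypothesis $\mathbb{E}_{f\sim\mu}|\tfrac{d}{dt}F(f)(t)|\le C(1+\varepsilon^{-1}e^{-\alpha(1-t)/\varepsilon})$ yields
\[
\mathbb{E}_{f\sim\mu}\Bigl|\int_0^1F(f)\,dx-\sum_{j=0}^{J-1}h_jF(f)(x_j)\Bigr|\le C\sum_{j=0}^{J-1}h_j\int_{x_j}^{x_{j+1}}\bigl(1+\varepsilon^{-1}e^{-\alpha(1-t)/\varepsilon}\bigr)\,dt .
\]

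Next I would split the sum at $j=J/2$. On the coarse block the width is constant, $h_j\equiv h=2(1-\sigma)/J\le 2/J$, and on the fine block $h_j\equiv H=2\sigma/J\le 1/J$; both inequalities hold uniformly in $\varepsilon$ because $\sigma=\min\{1/2,2\varepsilon\ln J/\alpha\}\le 1/2$. Pulling the constant width out of each block turns the right-hand side into $Ch\int_0^{1-\sigma}(1+\varepsilon^{-1}e^{-\alpha(1-t)/\varepsilon})\,dt+CH\int_{1-\sigma}^{1}(1+\varepsilon^{-1}e^{-\alpha(1-t)/\varepsilon})\,dt$. The contributions of the constant ``$1$'' are at most $Ch(1-\sigma)\le 2C/J$ and $CH\sigma\le C/(2J)$. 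For the exponential I would integrate in closed form: $\int_0^{1-\sigma}\varepsilon^{-1}e^{-\alpha(1-t)/\varepsilon}\,dt=\tfrac1\alpha\bigl(e^{-\alpha\sigma/\varepsilon}-e^{-\alpha/\varepsilon}\bigr)\le\tfrac1\alpha$ and $\int_{1-\sigma}^{1}\varepsilon^{-1}e^{-\alpha(1-t)/\varepsilon}\,dt=\tfrac1\alpha\bigl(1-e^{-\alpha\sigma/\varepsilon}\bigr)\le\tfrac1\alpha$, so these terms are at most $2C/(\alpha J)$ and $C/(\alpha J)$. Adding the four pieces gives a bound $\tfrac{C}{J}\bigl(c_1+c_2/\alpha\bigr)$ with absolute constants $c_1,c_2$, which for all $J$ large enough is dominated by $2C(1+2/\alpha)\tfrac{\ln J}{J}$ (the $\ln J$ in the statement is in fact slack: this route already delivers the sharper $\lesssim 1/J$). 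The statement with right-endpoint evaluations $F(f)(x_{j+1})$, as actually used in the proof of Theorem~\ref{empirical risk thm}, follows verbatim.

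The main obstacle — essentially the only place the Shishkin structure is used — is the exponential term on the coarse block $[0,1-\sigma]$. One cannot merely bound the integrand there by a small pointwise quantity: although $e^{-\alpha(1-t)/\varepsilon}$ is exponentially small for $t\le 1-\sigma$, the prefactor $\varepsilon^{-1}$ can make $\varepsilon^{-1}e^{-\alpha(1-t)/\varepsilon}$ blow up as $\varepsilon\to0$, so any estimate that keeps $\varepsilon^{-1}$ fails to be $\varepsilon$-uniform. The resolution is to integrate the exponential exactly so that the $\varepsilon^{-1}$ is absorbed into the $1/\alpha$ produced by the antiderivative, uniformly in $\varepsilon$, and only afterwards discard $e^{-\alpha\sigma/\varepsilon}\le1$. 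The rest is bookkeeping: reading off $h\le 2/J$ and $H\le 1/J$ from $\sigma\le1/2$ (so that no case distinction between $\sigma=1/2$ and $\sigma=2\varepsilon\ln J/\alpha$ is needed), and checking the harmless constant in the final comparison. A cruder cell-wise estimate $h_j^{2}\sup_{[x_j,x_{j+1}]}|\tfrac{d}{dt}F(f)|$ on the fine block, where $h_j=H\sim\varepsilon\ln J/J$ while the derivative bound there is $\sim\varepsilon^{-1}$, would produce the $\ln J$ factor of the statement honestly, which may be closer to the author's intent.
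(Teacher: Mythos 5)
Your proposal is correct, and while the opening reduction coincides with the paper's (fundamental theorem of calculus on each cell, Tonelli to move $\mathbb{E}_{f\sim\mu}$ inside, then the hypothesised derivative bound), the way you handle the resulting Shishkin-mesh estimate is genuinely different. The paper makes a case distinction on $\sigma$ and argues cell by cell: on the coarse block it keeps the term $\frac{h_{j-1}}{\alpha}e^{-\alpha(1-x_j)/\varepsilon}$ and uses $e^{-\alpha\sigma/\varepsilon}=J^{-2}$, while on the fine block it discards the exponential, bounds the integrand by $2/\varepsilon$, and uses $H^2/\varepsilon\lesssim \ln J/J^2$ --- exactly the "cruder cell-wise estimate" you describe at the end, and this is where the paper's $\ln J$ comes from. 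You instead pull the constant cell width out of each block and integrate the layer term in closed form, so that $\varepsilon^{-1}$ is absorbed into $1/\alpha$ uniformly in $\varepsilon$; this avoids the case split $\sigma=1/2$ versus $\sigma=2\varepsilon\ln J/\alpha$ entirely and yields the sharper $\varepsilon$-uniform bound $\lesssim (1+1/\alpha)/J$, of which the stated $2C(1+2/\alpha)\ln J/J$ is a weakening. You also correctly note that the version actually needed later (right-endpoint weights, matching $\omega_0=0$ in the empirical risk) follows by the same argument, which is in fact the form the paper proves. The only blemish is the explicit constant: your final bound $\frac{C}{J}\left(\frac{5}{2}+\frac{3}{\alpha}\right)$ is dominated by $2C(1+2/\alpha)\frac{\ln J}{J}$ only for $J\gtrsim 4$ (e.g.\ at $J=2$ it is not), so to recover the lemma verbatim for all admissible $J$ you would either treat the smallest meshes separately or simply state the conclusion with an unspecified constant, as the paper's ``$\lesssim$'' convention elsewhere would permit; this does not affect the substance of the result or its later use.
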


By setting $F=(\mathscr{G}-\widetilde{\mathscr{N}})^2$ in Lemma \ref{expect_interpolation_error}, we can directly obtain an estimate for term \uppercase\expandafter{\romannumeral3} using the conclusion of this lemma, given that inequality \eqref{equ_assump_3} is satisfied. Specifically, we have 
\begin{equation}
    \text{\uppercase\expandafter{\romannumeral3}}\lesssim\left(\ln{J}/J\right)^{1/2}.\label{3}
\end{equation}
Combining Lemma \ref{lemma_decom_1} with formulas \eqref{2+4}, \eqref{3}, we arrive at the conclusion of Theorem \ref{generelization gap}.

\subsubsection{Proof of Theorem \ref{empirical risk thm}}

In Section \ref{sec_approximation}, we construct operators in the form of \eqref{deeponet} and estimate the error incurred by these operators in approximating the target operator $\mathscr{G}$. Of particular interest is the operator defined as
\begin{equation}
    \hspace{0.5em}\overline{\kern-0.5em\mathscr{N}\kern+0.1em}\hspace{-0.1em}(f)=\mathbb{E}_{\nu}+\sum\limits_{j=-K}^K(\mathscr{G}\circ\mathscr{D}\circ\mathscr{E}(f),\widetilde{\tau}_j)\tau_j,
    \label{operator_net}
\end{equation}
where operator $\mathscr{D}$ is detailed in Remark \ref{rek_encoder}, and $\widetilde{\tau}_j$, $j=-K,\dots,K$ is obtained by orthogonalizing the set of functions $\{\mathscr{G}(\phi_j)=\mathscr{G}(e^{i2\pi kx})\}_{j=-K}^K$. Additionally, $\{\tau^0, \{\tau_j\}_{j=-K}^K\}$ is a set of neural networks that satisfies the condition:
\begin{equation*}
    \max\{\|\tau^0-\mathbb{E}_{\nu}\|_{L^{\infty}},\mathop {\max }\limits_{j=-K,\dots,K}\|\tau_j-\widetilde{\tau}_j\|_{L^{\infty}}\}<\delta=p^{-(n+1)},
\end{equation*}
where $n$ is a fixed, arbitrary positive constant. 

In the following, we set out to investigate the empirical risk $\mathscr{E}_{N,J}(\hspace{0.5em}\overline{\kern-0.5em\mathscr{N}\kern+0.1em}\hspace{-0.1em})$ associated with the operator at hand, and provide proof of Theorem \ref{empirical risk thm}.  While Theorem \ref{cor_approximation error} has previously addressed the estimation of $\widehat{\mathscr{E}}(\hspace{0.5em}\overline{\kern-0.5em\mathscr{N}\kern+0.1em}\hspace{-0.1em})$, our present focus centers on analyzing $\mathscr{E}_{diff}(\hspace{0.5em}\overline{\kern-0.5em\mathscr{N}\kern+0.1em}\hspace{-0.1em})$. Considering that the activation function of $\hspace{0.5em}\overline{\kern-0.5em\mathscr{N}\kern+0.1em}\hspace{-0.1em}$ is not always differentiable, we introduce a modified operator $\widetilde{\mathscr{N}}$, which is obtained from $\hspace{0.5em}\overline{\kern-0.5em\mathscr{N}\kern+0.1em}\hspace{-0.1em}$ by replacing the bases function. Specifically, $\widetilde{\mathscr{N}}(f)$ is defined as:
\begin{equation}
    \widetilde{\mathscr{N}}(f)=\tau^0+\sum\limits_{j=-K}^K(\mathscr{G}\circ\mathscr{D}\circ\mathscr{E}(f),\widetilde{\tau}_j)\widetilde{\tau}_j.
    \label{tilde_operator_net}
\end{equation}

By substituting $\widetilde{\mathscr{N}}$ into Eq.~\eqref{diff_decomposition}, we can control $\mathscr{E}_{diff}(\hspace{0.5em}\overline{\kern-0.5em\mathscr{N}\kern+0.1em}\hspace{-0.1em})$ via the four terms associated with $\hspace{0.5em}\overline{\kern-0.5em\mathscr{N}\kern+0.1em}\hspace{-0.1em}$ and $\widetilde{\mathscr{N}}$, still denoted as \uppercase\expandafter{\romannumeral1}, \uppercase\expandafter{\romannumeral2}, \uppercase\expandafter{\romannumeral3}, and \uppercase\expandafter{\romannumeral4}. The estimation of terms \uppercase\expandafter{\romannumeral1}, \uppercase\expandafter{\romannumeral2}, and \uppercase\expandafter{\romannumeral4} is straightforward, and we present the results in Lemma \ref{lem_diff124} (with the proof given in Appendix \ref{appendix 7.2}).
\begin{lemma}
Assuming $m \leq p$, we can consider the operators $\hspace{0.5em}\overline{\kern-0.5em\mathscr{N}\kern+0.1em}\hspace{-0.1em}$ and $\widetilde{\mathscr{N}}$, as defined in Eq.~\eqref{operator_net} and Eq.~\eqref{tilde_operator_net}, respectively. For the terms $\uppercase\expandafter{\romannumeral1}$, $\uppercase\expandafter{\romannumeral2}$, and $\uppercase\expandafter{\romannumeral4}$ in Eq.~\eqref{diff_decomposition}, an estimate for their sum can be derived as follows:
\begin{equation*}
\uppercase\expandafter{\romannumeral1}+\uppercase\expandafter{\romannumeral2}+\uppercase\expandafter{\romannumeral4}\lesssim\widehat{\mathscr{E}}(\hspace{0.5em}\overline{\kern-0.5em\mathscr{N}\kern+0.1em}\hspace{-0.1em})/N^{1/4}+p^{-n},
\end{equation*}
where $\widehat{\mathscr{E}}(\hspace{0.5em}\overline{\kern-0.5em\mathscr{N}\kern+0.1em}\hspace{-0.1em})$ is estimated using Theorem \ref{cor_approximation error}.\label{lem_diff124}
\end{lemma}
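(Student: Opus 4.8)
\emph{Plan of proof.} Apply the splitting \eqref{diff_decomposition} to $\mathscr{N}=\overline{\mathscr{N}}$ together with the auxiliary operator $\widetilde{\mathscr{N}}$ of \eqref{tilde_operator_net}; the present lemma concerns the resulting terms $\mathrm{I}$, $\mathrm{II}$, $\mathrm{IV}$. The one structural fact I would exploit throughout is that the constructed operator is \emph{linear}: the PDE \eqref{equ} is linear, so $\mathscr{G}$ is linear; the point-evaluation encoder $\mathscr{E}$ and the discrete-Fourier decoder $\mathscr{D}$ of Remark \ref{rek_encoder} are linear; and since $\mathbb{E}_\mu=0$ (hence $\mathbb{E}_\nu=0$ and $\tau^0=0$) both $\mathscr{P}$ and $\mathscr{R}$ collapse to linear maps. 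Therefore $\overline{\mathscr{N}}=\mathscr{R}\circ\mathscr{P}\circ\mathscr{G}\circ\mathscr{D}\circ\mathscr{E}$, $\widetilde{\mathscr{N}}$, and $T:=\mathscr{G}-\overline{\mathscr{N}}$ are all bounded linear operators on $L^2([0,1])$.

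\emph{Terms $\mathrm{II}$ and $\mathrm{IV}$.} Repeating the manipulation that produced \eqref{2+4} gives
$$\mathrm{II}+\mathrm{IV}\le\Big(\mathbb{E}_{f\sim\mu}\|\overline{\mathscr{N}}(f)-\widetilde{\mathscr{N}}(f)\|_{L^2}^2\Big)^{1/2}+\Big(\sum_{j=0}^J\omega_j\,\mathbb{E}_{f\sim\mu}|\overline{\mathscr{N}}(f)(y_j)-\widetilde{\mathscr{N}}(f)(y_j)|^2\Big)^{1/2}.$$
From \eqref{operator_net}, \eqref{tilde_operator_net} and $\tau^0=\mathbb{E}_\nu=0$ we have $\overline{\mathscr{N}}(f)-\widetilde{\mathscr{N}}(f)=\sum_{j=-K}^K(\mathscr{G}\circ\mathscr{D}\circ\mathscr{E}(f),\widetilde{\tau}_j)(\tau_j-\widetilde{\tau}_j)$. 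Using $\|\tau_j-\widetilde{\tau}_j\|_{L^\infty}<\delta=p^{-(n+1)}$, the Cauchy--Schwarz inequality over the $p=2K+1$ summands, and Bessel's inequality for the orthonormal system $\{\widetilde{\tau}_j\}$, I obtain $|\overline{\mathscr{N}}(f)(y)-\widetilde{\mathscr{N}}(f)(y)|\le\delta\sqrt{p}\,\|\mathscr{G}\circ\mathscr{D}\circ\mathscr{E}(f)\|_{L^2}$ for every $y$, and the identical estimate in the $L^2$ norm. Since $\mathscr{G}$ is Lipschitz with $\mathscr{G}(0)=0$, Lemma \ref{encoding_error} yields $\mathbb{E}_{f\sim\mu}\|\mathscr{G}\circ\mathscr{D}\circ\mathscr{E}(f)\|_{L^2}^2\lesssim\mathbb{E}\|\mathscr{D}\circ\mathscr{E}(f)-f\|_{L^2}^2+\mathbb{E}\|f\|_{L^2}^2\lesssim e^{-2\pi^2l^2M^2}+\sum_k\lambda_k\lesssim1$. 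As $\sum_{j=0}^J\omega_j=1$, combining these gives $\mathrm{II}+\mathrm{IV}\lesssim\delta\sqrt{p}=p^{-n-1/2}\le p^{-n}$.

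\emph{Term $\mathrm{I}$.} Following the chain of inequalities \eqref{decom_1} in the proof of Lemma \ref{lemma_decom_1} with $X_n=\|\mathscr{G}(F_n)-\overline{\mathscr{N}}(F_n)\|_{L^2}^2$ (the $F_n$ i.i.d., so the cross terms vanish) gives $\mathrm{I}\le N^{-1/4}\big((\mathbb{E}X_1^2)^{1/4}+(\mathbb{E}X_1)^{1/2}\big)$, with $(\mathbb{E}X_1)^{1/2}=\widehat{\mathscr{E}}(\overline{\mathscr{N}})$. The new ingredient is $(\mathbb{E}X_1^2)^{1/4}\lesssim\widehat{\mathscr{E}}(\overline{\mathscr{N}})$. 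Since $T=\mathscr{G}-\overline{\mathscr{N}}$ is bounded linear and $\mu=\mathcal{N}(0,\mathcal{C})$ has trace-class covariance, $T(f)$ is a centred Gaussian element of $L^2([0,1])$; diagonalising its covariance $T\mathcal{C}T^{*}=\sum_i\nu_i\,\psi_i\otimes\psi_i$ (with $\nu_i\ge0$, $\sum_i\nu_i=\mathbb{E}\|T(f)\|_{L^2}^2=\widehat{\mathscr{E}}(\overline{\mathscr{N}})^2<\infty$) gives $X_1=\|T(f)\|_{L^2}^2=\sum_i\nu_i Z_i^2$ with $Z_i$ i.i.d.\ $\mathcal{N}(0,1)$, hence $\mathbb{E}X_1^2=(\sum_i\nu_i)^2+2\sum_i\nu_i^2\le3(\sum_i\nu_i)^2=3(\mathbb{E}X_1)^2$. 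Thus $\mathrm{I}\lesssim\widehat{\mathscr{E}}(\overline{\mathscr{N}})/N^{1/4}$, and adding the estimate for $\mathrm{II}+\mathrm{IV}$ gives $\mathrm{I}+\mathrm{II}+\mathrm{IV}\lesssim\widehat{\mathscr{E}}(\overline{\mathscr{N}})/N^{1/4}+p^{-n}$, with $\widehat{\mathscr{E}}(\overline{\mathscr{N}})$ bounded via Theorem \ref{cor_approximation error}.

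\emph{Main obstacle.} The load-bearing step is the Gaussian moment equivalence $\mathbb{E}\|T(f)\|^4\le3(\mathbb{E}\|T(f)\|^2)^2$, which converts the crude fourth moment into $\widehat{\mathscr{E}}(\overline{\mathscr{N}})$; it rests on $\overline{\mathscr{N}}$ being \emph{exactly} linear and on $\mu$ being centred, so one must invoke $\mathbb{E}_\mu=0$ carefully (a non-centred input measure would require the corresponding non-central $\chi^2$ bound). A minor bookkeeping point is that some constants hidden in $\lesssim$ (e.g.\ $\sum_k\lambda_k$ and those of Lemma \ref{encoding_error}) depend on the kernel length scale $l$; this is harmless and consistent with the $l$-dependent constant already present in Theorem \ref{empirical risk thm}.
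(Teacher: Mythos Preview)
Your proposal is correct and follows essentially the same route as the paper's proof in Appendix \ref{appendix 7.2}: linearity of $\mathscr{G}-\overline{\mathscr{N}}$ gives a Gaussian pushforward so that the fourth moment is controlled by the square of the second, and terms $\mathrm{II}$, $\mathrm{IV}$ are handled by the explicit difference $\sum_j(\mathscr{G}\circ\mathscr{D}\circ\mathscr{E}(f),\widetilde{\tau}_j)(\tau_j-\widetilde{\tau}_j)$ together with the $L^\infty$ bound on $\tau_j-\widetilde{\tau}_j$. The only cosmetic differences are that you prove the Gaussian moment equivalence by hand (diagonalising $T\mathcal{C}T^*$ to get the constant $3$) where the paper cites Stuart's inequality (Lemma \ref{g.m_inequ}), and you bound $\mathbb{E}\|\mathscr{D}\circ\mathscr{E}(f)\|_{L^2}^2$ via the triangle inequality and Lemma \ref{encoding_error} rather than via the projection identity $\mathscr{D}\circ\mathscr{E}=P_M+\mathscr{D}\circ\mathscr{E}\circ P_M^{\perp}$ used in the paper; your Cauchy--Schwarz/Bessel step even gains a factor $\sqrt{p}$ over the paper's cruder $\eta=p\delta$ bound.
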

    
   We now focus on the term \uppercase\expandafter{\romannumeral3}, which can be bounded using inequality \eqref{term3_decom}.    We define $F(f)=(\mathscr{G}(f)-\widetilde{\mathscr{N}}(f))^2$, and according to Lemma \ref{expect_interpolation_error}, estimating term \uppercase\expandafter{\romannumeral3} requires only an estimation of  $\mathbb{E}_{f\sim \mu}\left|\frac{d}{dx}F(f)(x)\right|$. We focus on this quantity next. In fact, when $M\le K$ (that is, when $m\le p$), we have the following simple form for $\mathscr{G}(f)-\widetilde{\mathscr{N}}(f)$:
\begin{equation}
    \mathscr{G}(f)(x)-\widetilde{\mathscr{N}}(f)(x)=\mathscr{G}(f)(x)-\mathscr{G}\circ\mathscr{D}\circ\mathscr{E}(f)(x).\label{simplify}
\end{equation}
Here, we have used the fact that when $m\le p$,
\begin{equation*}
    \widetilde{\mathscr{N}}(f)=\sum\limits_{j=-K}^K(\mathscr{G}\circ\mathscr{D}\circ\mathscr{E}(f),\widetilde{\tau}_j)\widetilde{\tau}_j=\mathscr{G}\circ\mathscr{D}\circ\mathscr{E}(f).
\end{equation*}
With this observation, we obtain the estimate for $\mathbb{E}_{f\sim \mu}\left|\frac{d}{dx}F(f)(x)\right|$ using the following lemma (proof provided in Appendix \ref{appendix 7.3}).
\begin{lemma}
    When $m\le p$, the following inequality holds:
    \begin{equation*}
        \mathbb{E}_{f\sim\mu}\left|\left(\mathscr{G}(f)(x)-\widetilde{\mathscr{N}}(f)(x)\right)\frac{d}{dx}\left(\mathscr{G}(f)(x)-\widetilde{\mathscr{N}}(f)(x)\right)\right|\le\frac{C}{m^2}(1+1/\varepsilon e^{-\alpha(1-x)/\varepsilon}).
    \end{equation*}
\label{deri_bound}
\end{lemma}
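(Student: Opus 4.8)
The plan is to use the linearity of $\mathscr{G}$ together with the simplification \eqref{simplify} (valid because $m\le p$) to reduce the claim to a weighted sum over the Fourier modes $\phi_k=e^{i2\pi kx}$, and then to let the Gaussian eigenvalues $\lambda_k$ absorb all polynomial factors. First I would write $w(f):=\mathscr{G}(f)-\widetilde{\mathscr{N}}(f)=\mathscr{G}\bigl(f-\mathscr{D}\circ\mathscr{E}(f)\bigr)$, using \eqref{simplify} and linearity. Next, inserting the Karhunen--Lo\`eve expansion $f=\sum_k\sqrt{\lambda_k}\xi_k\phi_k$ into the explicit formula for $\mathscr{D}\circ\mathscr{E}$ from Remark \ref{rek_encoder} and using the aliasing identity of the discrete Fourier transform, namely $\widehat{f}_{k'}=\sum_{k\equiv k'\,(\mathrm{mod}\,m)}\sqrt{\lambda_k}\xi_k$ (recall $m=2M+1$), I obtain
\begin{equation*}
 f-\mathscr{D}\circ\mathscr{E}(f)=\sum_{|k|>M}\sqrt{\lambda_k}\,\xi_k\,\bigl(\phi_k-\phi_{\pi(k)}\bigr),
\end{equation*}
where $\pi(k)$ denotes the unique representative in $\{-M,\dots,M\}$ of $k$ modulo $m$. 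Applying $\mathscr{G}$ term by term gives $w(f)=\sum_{|k|>M}\sqrt{\lambda_k}\,\xi_k\,\psi_k$ with $\psi_k:=\mathscr{G}(\phi_k)-\mathscr{G}(\phi_{\pi(k)})$.

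The second ingredient is $\varepsilon$-uniform pointwise control of $\psi_k$ and $\psi_k'$. Since $|\phi_k|\equiv 1$, applying Lemma \ref{lem_bounded} to the real and imaginary parts yields $|\mathscr{G}(\phi_k)(x)|\le C$ with $C$ independent of $k$, $x$ and $\varepsilon$, hence $|\psi_k(x)|\le 2C$. For the derivative I would invoke the classical $\varepsilon$-uniform a priori estimate for \eqref{equ} (see Roos et al.\ \cite{roos_robust_method_spp}): for smooth data $g$ the solution satisfies $\bigl|\frac{d}{dx}\mathscr{G}(g)(x)\bigr|\le C_g\,(1+\varepsilon^{-1}e^{-\alpha(1-x)/\varepsilon})$ with $C_g$ controlled by finitely many derivatives of $g$; for $g=\phi_k$ this produces a constant $C_k$ growing at most polynomially in $k$, so that $|\psi_k'(x)|\le C_k\,(1+\varepsilon^{-1}e^{-\alpha(1-x)/\varepsilon})$ (using $|\pi(k)|\le|k|$ to handle the aliased mode).

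Finally I would take expectations. Writing the product of the two series, bounding it termwise, and using $\mathbb{E}|\xi_k\xi_{k'}|\le(\mathbb{E}\xi_k^2)^{1/2}(\mathbb{E}\xi_{k'}^2)^{1/2}=1$ (the interchange of $\mathbb{E}$ with the sums being justified by dominated convergence, since the series converge absolutely a.s.\ and in $L^1$ thanks to the super-exponential decay of $\sqrt{\lambda_k}$),
\begin{equation*}
 \mathbb{E}_{f\sim\mu}\bigl|w(f)(x)\,w(f)'(x)\bigr|\le\Bigl(\sum_{|k|>M}\sqrt{\lambda_k}\,|\psi_k(x)|\Bigr)\Bigl(\sum_{|k|>M}\sqrt{\lambda_k}\,|\psi_k'(x)|\Bigr).
\end{equation*}
Inserting $\lambda_k=\sqrt{2\pi}\,l\,e^{-2\pi^2k^2l^2}$ and the bounds above, the first factor is $\lesssim\sum_{|k|>M}e^{-\pi^2l^2k^2}\lesssim e^{-\pi^2l^2M^2}$, and the second is $\lesssim(1+\varepsilon^{-1}e^{-\alpha(1-x)/\varepsilon})\sum_{|k|>M}C_ke^{-\pi^2l^2k^2}\lesssim(1+\varepsilon^{-1}e^{-\alpha(1-x)/\varepsilon})\,e^{-cl^2M^2}$ for a fixed $c>0$, the polynomial $C_k$ being swallowed by the Gaussian tail. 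Multiplying, and using $m=2M+1$ together with the elementary bound $e^{-c'l^2M^2}\le C(l)/m^2$, gives the claimed estimate with $C$ depending only on $l$ and $\alpha$.

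The main obstacle is the $\varepsilon$-uniform derivative estimate for $\mathscr{G}(\phi_k)$: one must verify that the boundary-layer bound on $\frac{d}{dx}\mathscr{G}(\phi_k)$ holds with a constant independent of $\varepsilon$ and growing only polynomially in the frequency $k$. This is the derivative analogue of the $L^\infty$ bound quoted in Lemma \ref{lem_bounded}, and it is precisely where the singular-perturbation structure enters; everything else is bookkeeping with Gaussian series. A minor additional point is the complex/real identification of the modes $\phi_k$ and their coefficients $\xi_k$, but this is routine and does not affect the rate.
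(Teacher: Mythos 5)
Your proposal is correct in substance, but it takes a genuinely different route from the paper. You diagonalize the Gaussian residual over aliased Fourier modes, writing $\mathscr{G}(f)-\widetilde{\mathscr{N}}(f)=\sum_{|k|>M}\sqrt{\lambda_k}\,\xi_k\,\psi_k$ with $\psi_k=\mathscr{G}(\phi_k)-\mathscr{G}(\phi_{\pi(k)})$, and then push the singular-perturbation structure onto each deterministic mode via the classical $\varepsilon$-uniform bounds $|\mathscr{G}(\phi_k)|\le C$ and $|\frac{d}{dx}\mathscr{G}(\phi_k)(x)|\le C_k(1+\varepsilon^{-1}e^{-\alpha(1-x)/\varepsilon})$; the Gaussian weights $\sqrt{\lambda_k}$ then do the rest. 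The paper instead applies the variation-of-constants representation of the derivative directly to the random residual $u=\mathscr{G}(f-\mathscr{D}\circ\mathscr{E}(f))$ (via its Lemma \ref{u'_estimate} and the explicit formula for $z=u'$), bounds the mixed expectation $\mathbb{E}|\mathscr{G}(f-\mathscr{D}\circ\mathscr{E}(f))(x)\,h(s)|\le C/m^2$ by Cauchy--Schwarz together with the discrete-versus-continuous Fourier coefficient estimates (Lemma \ref{lem_fourier coff}, Corollary \ref{differ_norm}), and finally integrates the exponential kernels. Your mode-wise argument actually yields a stronger, spectrally small bound of order $e^{-cl^2M^2}(1+\varepsilon^{-1}e^{-\alpha(1-x)/\varepsilon})$, which trivially implies the stated $C/m^2$ rate; what it costs is that the key ingredient — the per-mode derivative layer estimate — is outsourced to a citation of Roos et al., whereas the paper keeps this self-contained (indeed its Lemma \ref{u'_estimate} plus Lemma \ref{lem_bounded} give exactly that estimate with a constant depending only on $\|g\|_{L^\infty}$, so your worry about polynomial growth of $C_k$ in $k$ is unnecessary: the constant is uniform in $k$). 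Two minor technical points you should make explicit if writing this up: the term-wise application of $\mathscr{G}$ and term-wise differentiation of the series need a one-line justification (absolute and uniform convergence from the super-exponential decay of $\sqrt{\lambda_k}$ together with the uniform mode bounds), and your displayed aliasing identity mislabels the discrete coefficient $c_{k'}=\sum_{k\equiv k'\,(\mathrm{mod}\,m)}\widehat{f}_k$ as $\widehat{f}_{k'}$, though the subsequent formula for $f-\mathscr{D}\circ\mathscr{E}(f)$ is correct.
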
  

By leveraging Lemma \ref{expect_interpolation_error} and Lemma \ref{deri_bound}, we derive an upper bound for the term \uppercase\expandafter{\romannumeral3} in Eq.~\eqref{diff_decomposition}:
\begin{equation*}
\text{\uppercase\expandafter{\romannumeral3}}\lesssim\frac{\sqrt{\ln{J}}}{mJ^{1/2}}.
\end{equation*}
Combining this estimate with the result of Lemma \ref{lem_diff124} enables us to complete the proof of Theorem \ref{empirical risk thm}.
\section{Numerical Experiments\label{sec_experiemnt}}

In this section, we demonstrate the effectiveness of DeepONets in solving singularly perturbed problems through three concrete examples, thereby contributing to the growing body of theoretical understanding of this model. The model's trainable parameters in Eq.~\eqref{deeponet}, denoted by $\boldsymbol{\theta}$, are obtained through the minimization of a loss function defined as follows:
\begin{equation}
    \mathcal{L}(\boldsymbol{\theta})=\frac{1}{N(J+1)}\sum\limits_{n=1}^{N}\sum\limits_{j=0}^{J}|u_n(y_j)-\mathscr{N}_{\boldsymbol{\theta}}(F_n)(y_j)| ^2.\label{loss}
\end{equation}
Here, $u_n$ represents the ground truth, which is typically obtained through numerical methods of high precision. To obtain $u_n(y_j)$, we utilize an up-winding scheme on the Shishkin mesh (see \cite{steady_convection_diffusion}) and interpolate accordingly.

$N$ samples, denoted $F_1,F_2,\dots,F_N$, are drawn independently and identically from the common distribution $\mu$. Examples of several pairs $(F_i,u_i)$ are provided in Appendix \ref{appendix 7.4}. In general, locations $\{y_i\}$ can be selected randomly from the domain $[0,1]$ or arranged as equidistant points. However, as noted previously, due to the presence of a thin boundary layer, the use of these sampling methods would demand a large number of points to ensure that the trained operator network can capture the relevant boundary layer information accurately. Conversely, sampling a large number of points far away from the boundary layer is unnecessary since the solution's behavior in this region is regular and slow. Consequently, we adopt Shishkin mesh points, defined in \eqref{mesh}, as the locations in \eqref{loss} within the interval $[0,1]$.
\begin{remark}
   Although machine learning models are commonly obtained through empirical risk minimization, where $\boldsymbol{\theta}$ is chosen to minimize empirical risk, and an operator network is determined accordingly. Notably, the solution to the singularly perturbed problem \eqref{equ} can vary across different length scales. To address this issue, the loss function \eqref{loss} assigns all weights $\omega_j$, where $j=0,1,\dots,J$ in \eqref{empirical risk}, a uniform value of $1/(J+1)$. This rescales the microscale and places data at different scales on the same scale. This approach is expected to mitigate the spectral bias phenomenon. The numerical results presented in the following sections confirm this conjecture.
\end{remark}
           
Throughout the following examples, we set the positive lower bound $\alpha$ of the function $p(x)$ to 1. Then the transition point of the Shishkin mesh is $\sigma=\min\{1/2,\ 2\varepsilon\ln{J}\}$.

\begin{example}
\label{exam_1}
We consider a boundary layer problem as presented in \cite{steady_convection_diffusion}:
\begin{equation}
    \left\{ \begin{array}{l}
-\varepsilon u''+u'=f,\quad x\in(0,1),\\
u(0)=u(1)=0.
\end{array} \right.\label{exam1}
\end{equation}
The Lipschitz continuity of the solution operator that maps $f(x)$ to the ODE solution $u(x)$ is proved in Appendix \ref{appendix 7.5}. Subsequently, we utilize DeepONets to learn this operator. To generate the necessary training dataset, we solve the equation using an up-winding scheme on a Shishkin mesh of 4096 points, and obtain $\{u(y_j)\}_{j=0}^J$ via interpolation.

We begin by training a DeepONet using 1000 random $f$ samples and 256 $y$ locations (i.e., $N=1000$ and $J=256$), resulting in a training dataset of size $2.56\times 10^5$. The training is conducted over 1000 epochs. Fig.~\ref{fig1} illustrates the predictive capability of the trained model, displaying its ability to predict solutions for two distinct $f$. The first is randomly sampled from $\mu$ with the parameter $l=1$, while the second is a simple out-of-distribution signal given by $f(x)\equiv1$.
\begin{figure}[!tbh]
\centering
\begin{minipage}{0.325\textwidth}
\centering
\includegraphics[width=\textwidth]{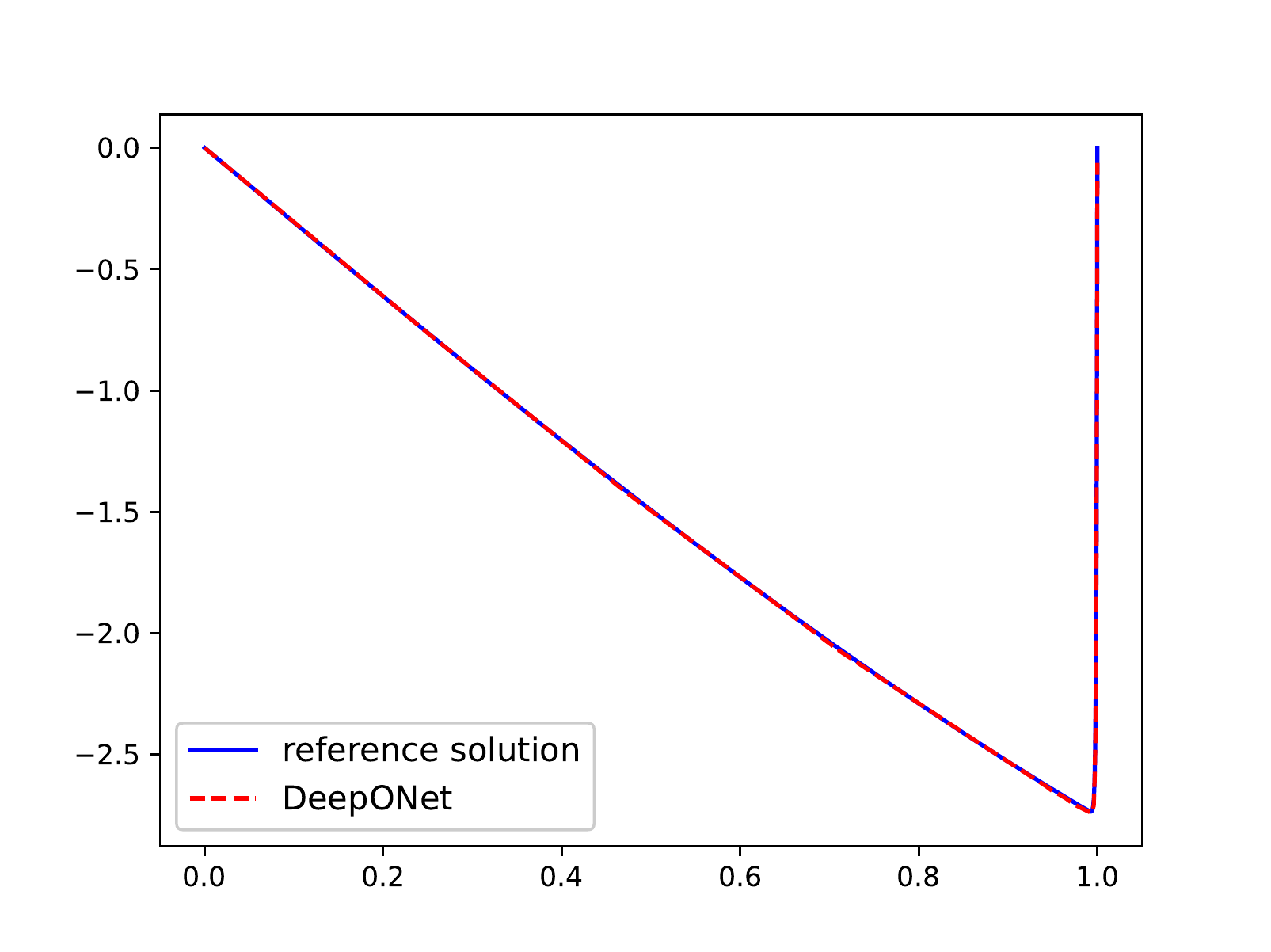}\\
\scriptsize{a)}
\end{minipage}
\begin{minipage}{0.325\textwidth}
\centering
\begin{minipage}{\textwidth}
\centering
\includegraphics[width=\textwidth]{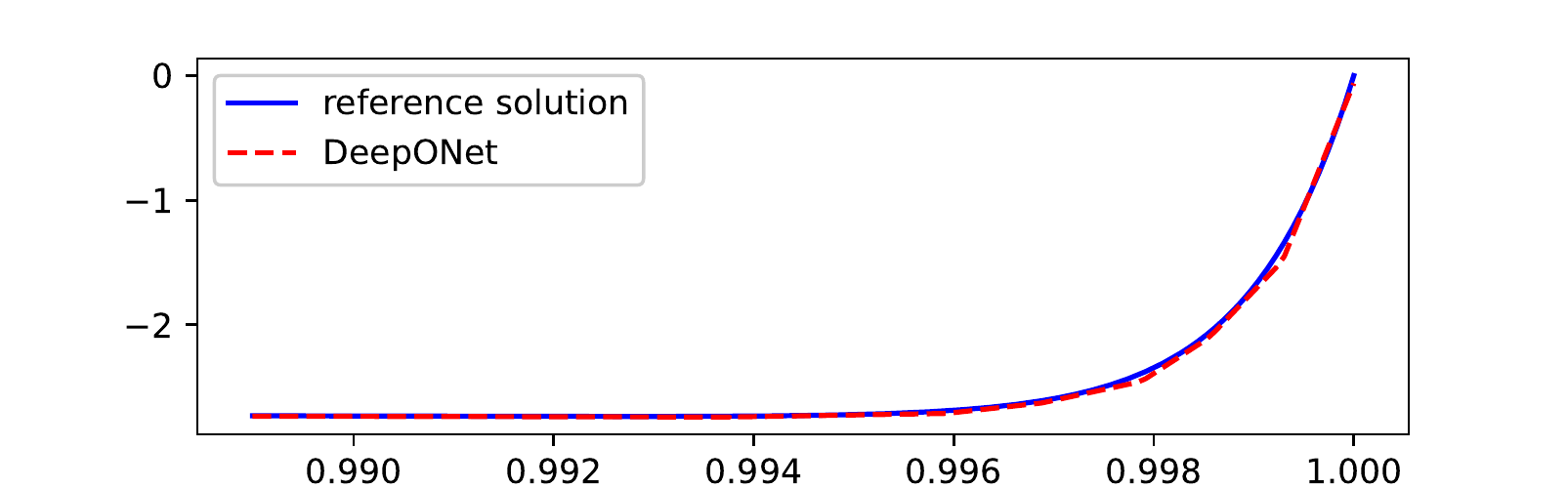}\\
\scriptsize{b)}
\end{minipage}
\\
\begin{minipage}{\textwidth}
\centering
\includegraphics[width=\textwidth]{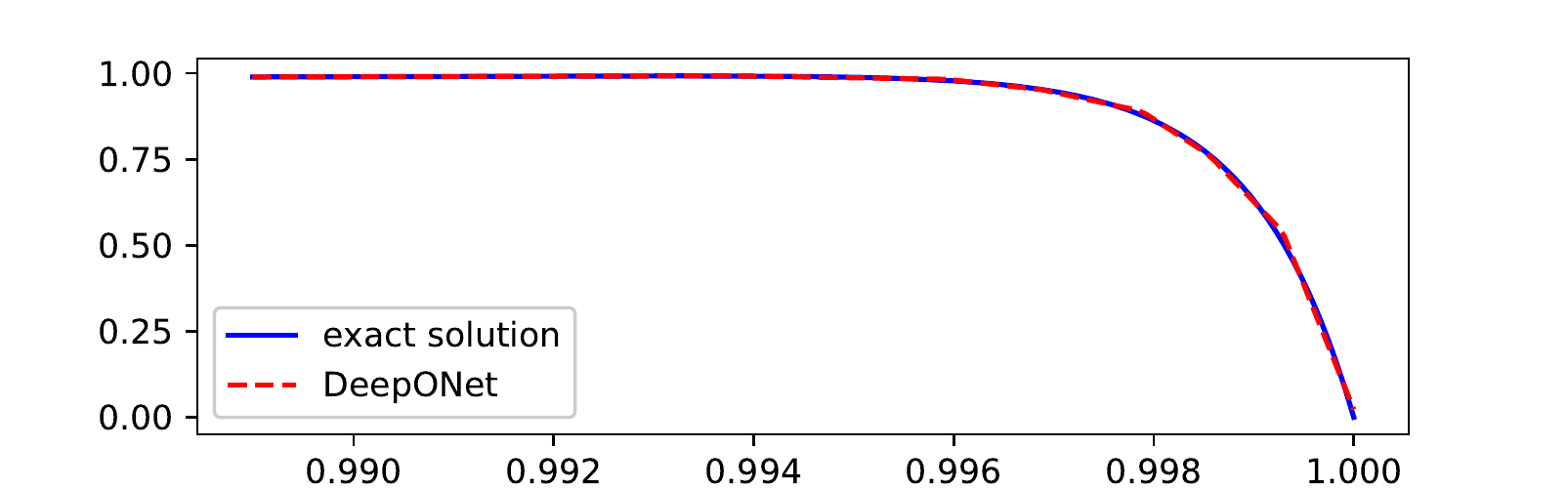}\\
\scriptsize{c)}
\end{minipage}
\end{minipage}
\begin{minipage}{0.325\textwidth}
\centering
\includegraphics[width=\textwidth]{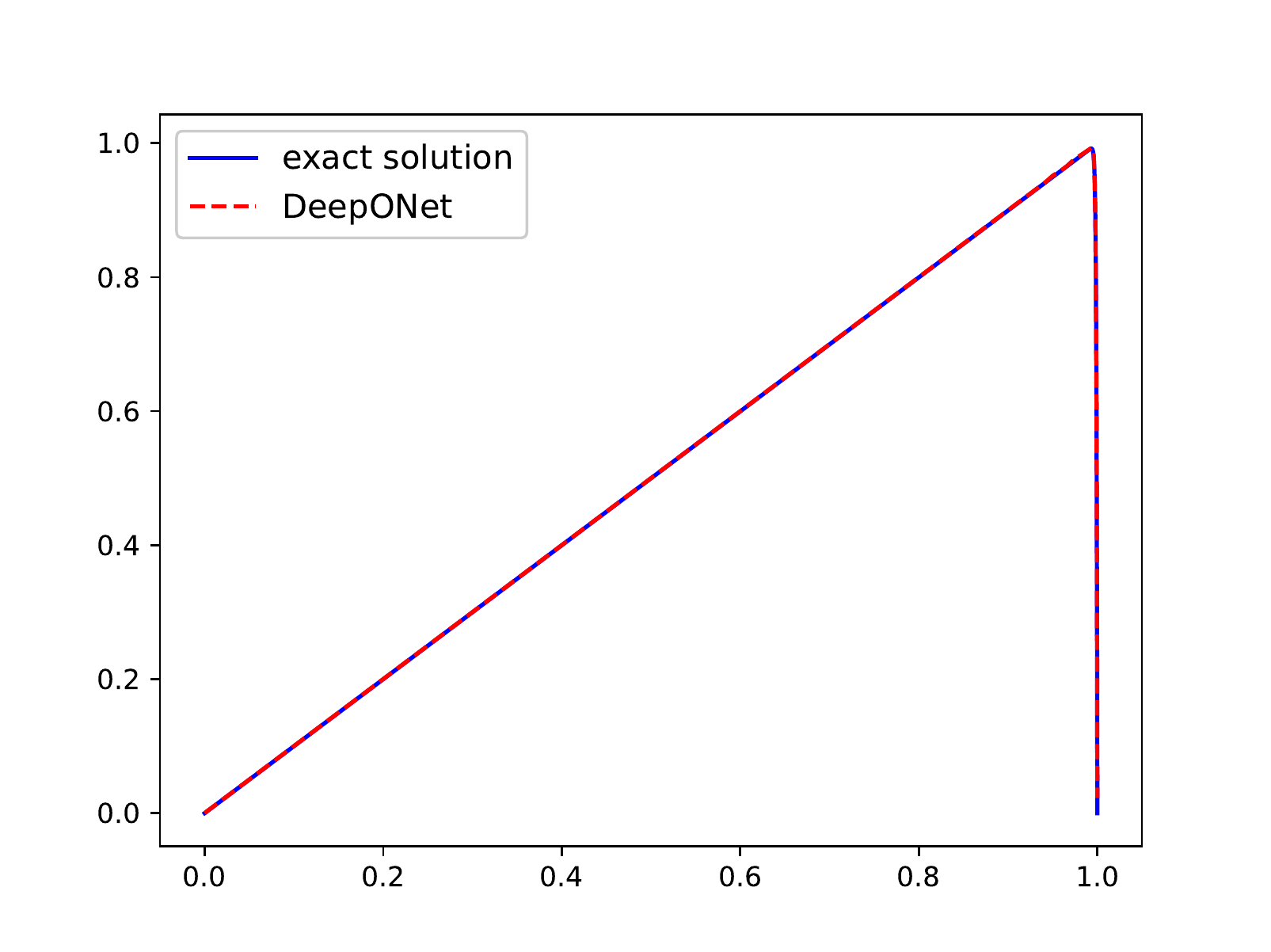}\\
\scriptsize{d)}
\end{minipage}
\caption{The trained model's predictions and reference solutions for Eq.~\eqref{exam1} ($\varepsilon=0.001$) on Shishkin mesh. a): Prediction and reference solution on a random $f\sim\mu$ on a grid of 256 points, with the latter using an up-winding scheme on Shishkin mesh. b): Horizontal magnification of the region $[1-\sigma,1]$ in a). c): Horizontal magnification of the region $[1-\sigma,1]$ in d). d): the model's prediction and exact solution for $f(x)\equiv1$ on a grid of 256 points, the exact solution is given by $u(x)=x-(e^{-(1-x)/\varepsilon}-e^{-1/\varepsilon})/(1-e^{-1/\varepsilon})$.}
\label{fig1}
\end{figure}

\begin{figure}[!tbh]
\centering
\begin{minipage}{0.325\textwidth}
\centering
\includegraphics[width=\textwidth]{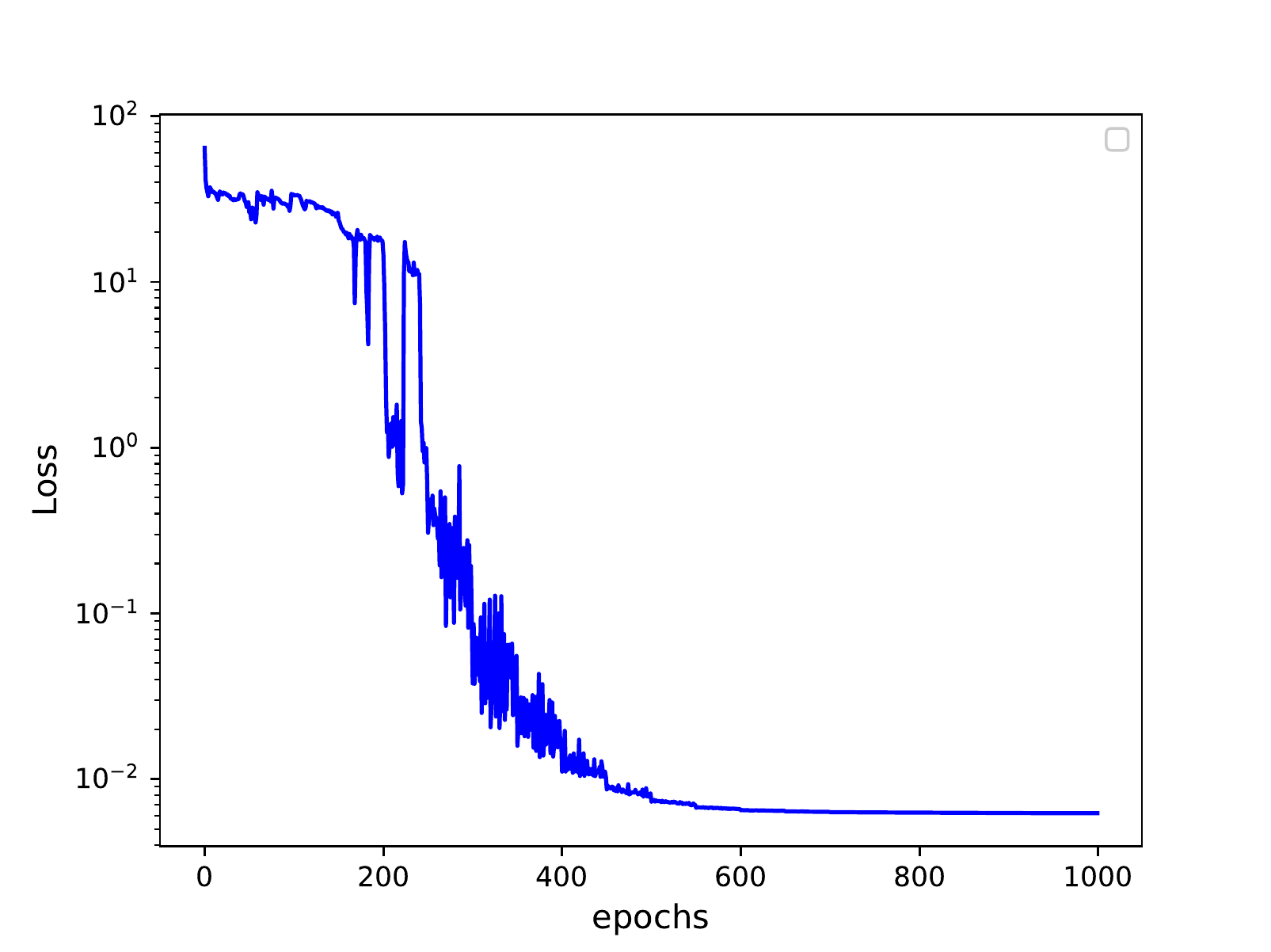}\\
\scriptsize{a)}
\end{minipage}
\begin{minipage}{0.325\textwidth}
\centering
\includegraphics[width=\textwidth]{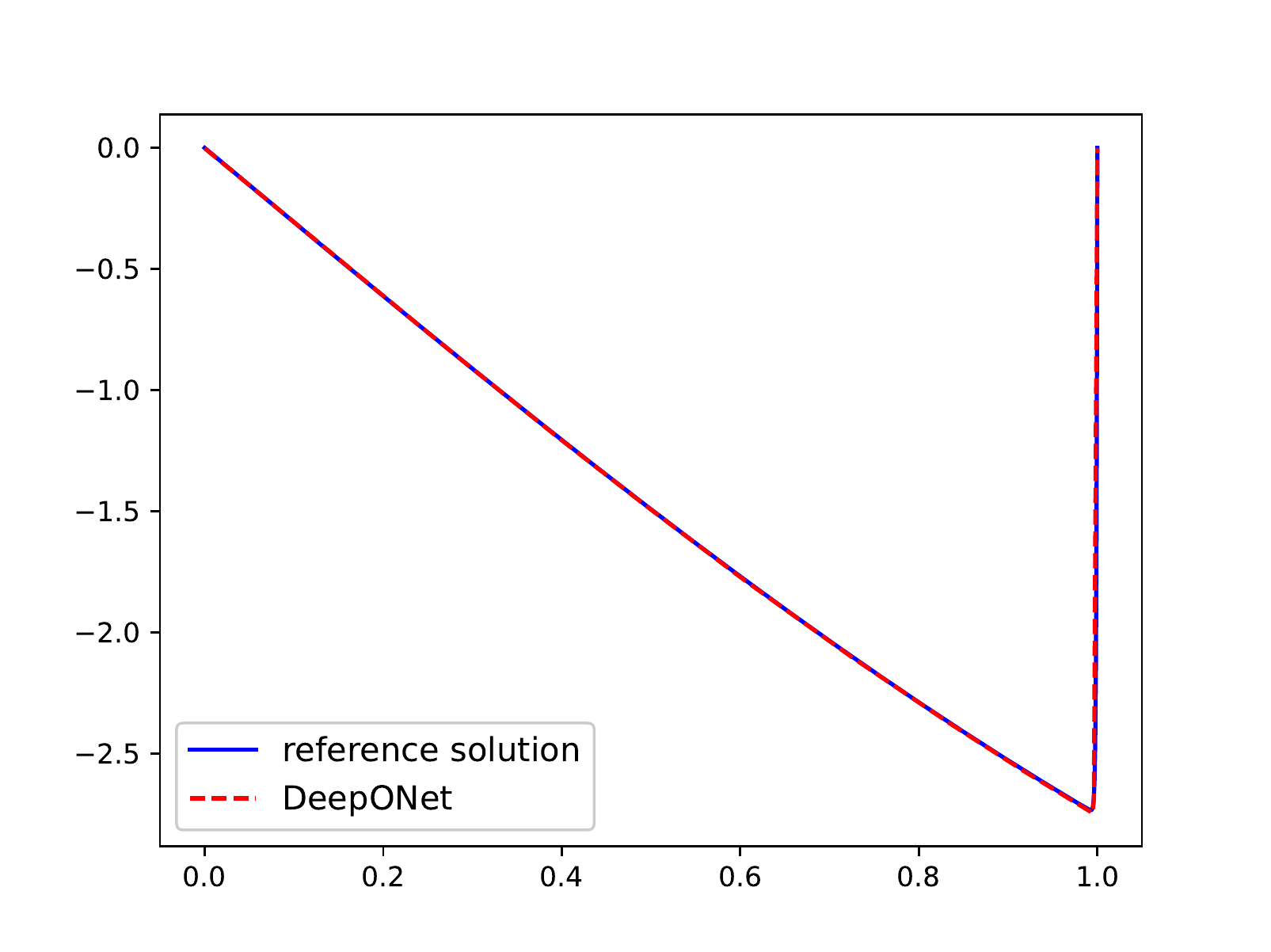}\\
\scriptsize{b)}
\end{minipage}
\begin{minipage}{0.325\textwidth}
\centering
\includegraphics[width=\textwidth]{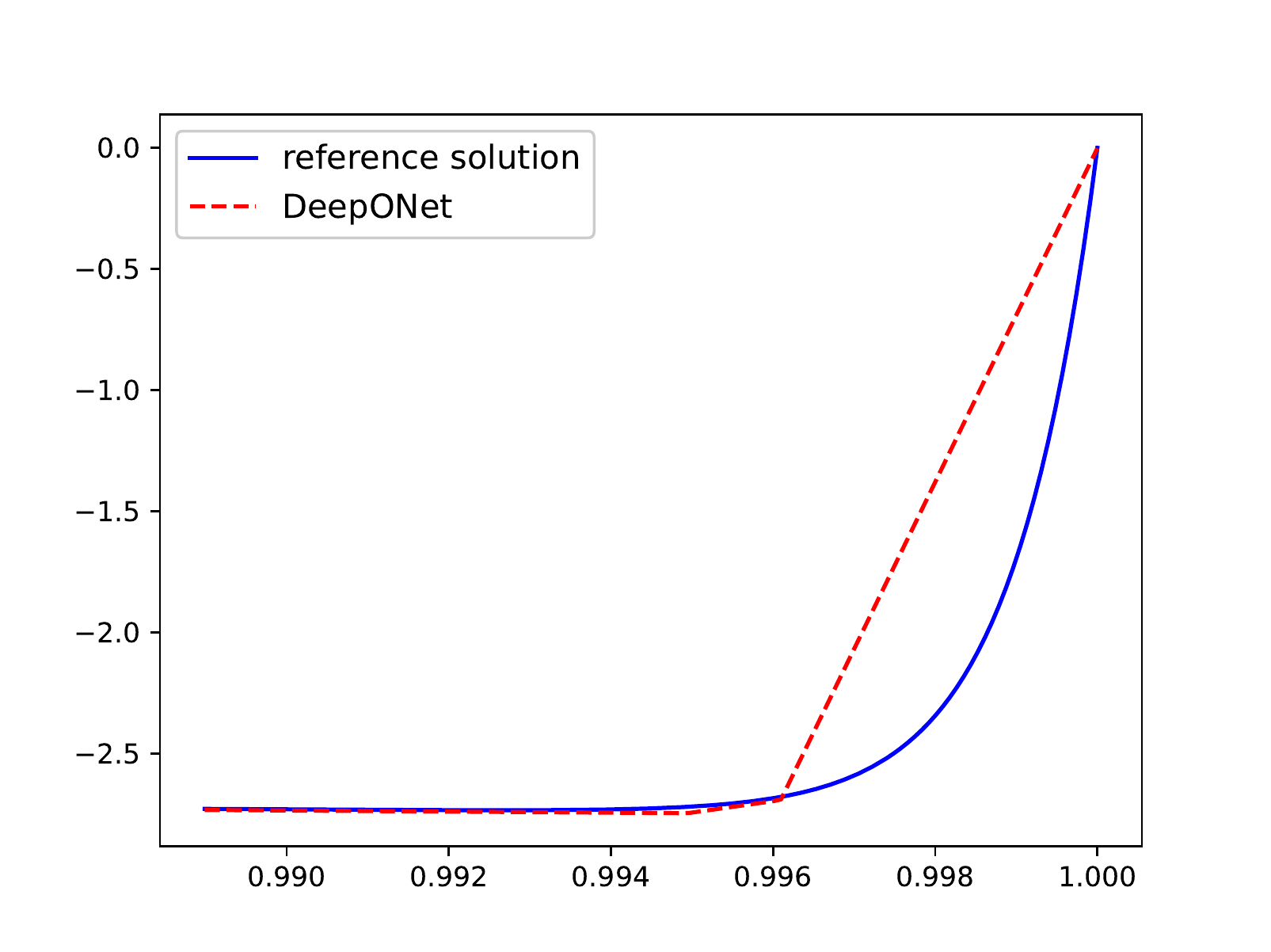}\\
\scriptsize{c)}
\end{minipage}
 \caption{Performance of a trained model using equidistant location points ($\varepsilon=0.001$). a):  The training trajectory of a DeepONet with equidistant location points. Here “epochs” refers to the number of steps that the Adam optimizer performs. b): Prediction using the trained model and reference solution on a random $f\sim\mu$ on a Shishkin mesh of 256 points. c): Horizontal magnification of the region $[1-\sigma,1]$ in b).  }
 \label{fig2}
\end{figure}

As previously noted, accurate capture of boundary layer information using a trained model necessitates taking a sufficient number of data points, particularly when sampling at equidistant locations. To validate this conjecture, we conducted experimental tests. To begin with, we train a DeepONet leveraging 1000 random $f$ samples and 256 equidistant locations $y$. In Figure~\ref{fig2}, subplot a) displays the trajectory of the training process. Subplots b) and c) indicate that training the network using equidistant locations is effective in capturing information beyond the boundary layer; however, it is not as effective in dealing with behavior within the boundary layer.

By utilizing the definition of the generalization gap, empirical risk can serve as an effective tool for assessing a model's generalization ability. A comparison of Fig.~\ref{fig1} and Fig.~\ref{fig2} reveals that the network trained on equidistant locations is inferior to the network trained on Shishkin mesh points in describing the boundary layer behavior. Fig.~\ref{fig3} provides a more comprehensive analysis, using both 1000 $f$ samples. The comparison between subplots a) and b) in Fig.\ref{fig3} shows that the network trained on Shishkin mesh points exhibits consistent generalization ability across various values of $\varepsilon$. Conversely, the network trained on equidistant points demonstrates a significant correlation with $1/\varepsilon$, indicating a reduction in its generalization ability as $\varepsilon$ decreases. Moreover, these figures demonstrate that the model's generalization ability improves with the increase in the number of locations $y$, i.e., $J$.
\begin{figure}[!tbh]
\centering
\begin{minipage}{0.4\textwidth}
\centering
\includegraphics[width=\textwidth]{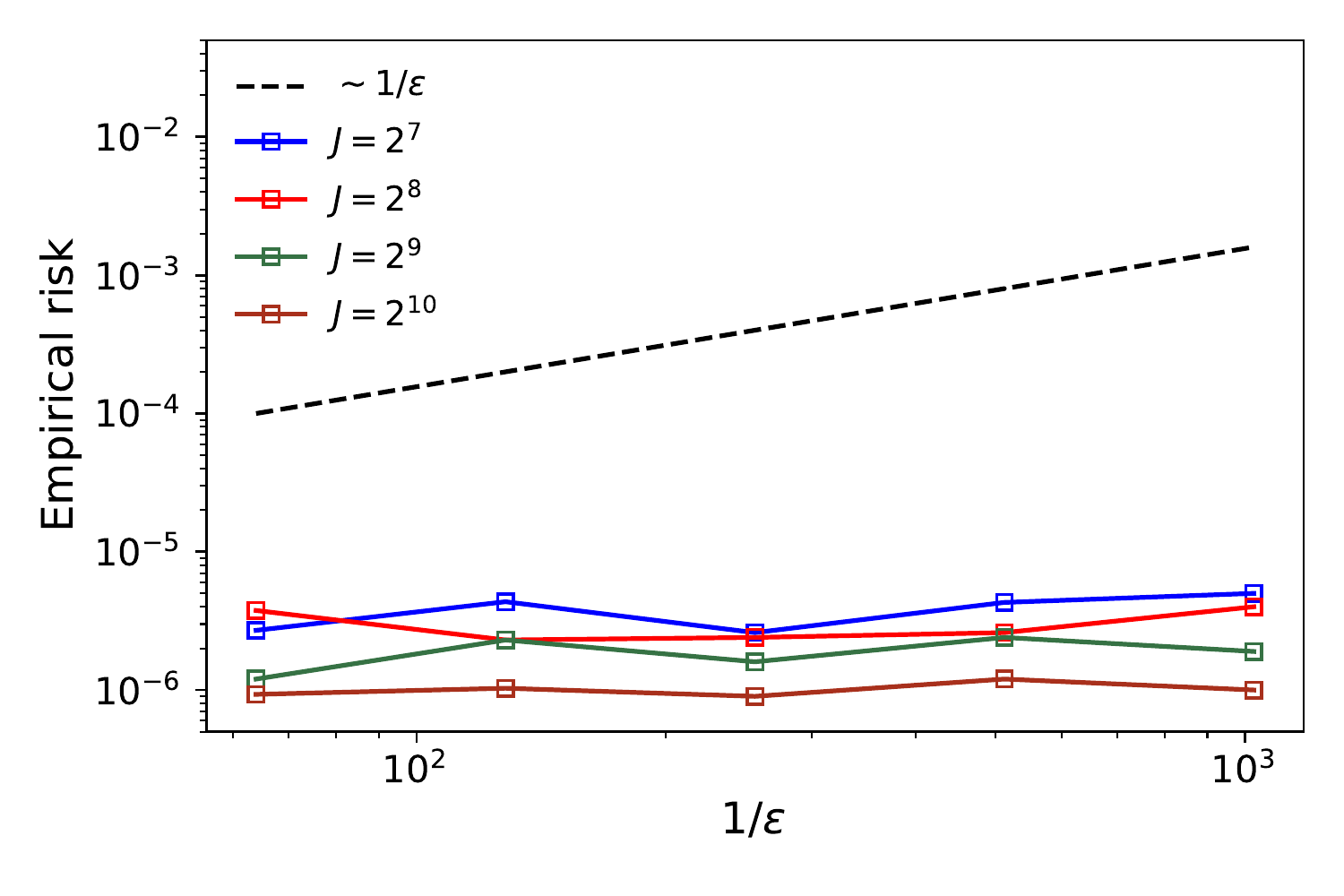}\\
\scriptsize{a)}
\end{minipage}
\begin{minipage}{0.4\textwidth}
\centering
\includegraphics[width=\textwidth]{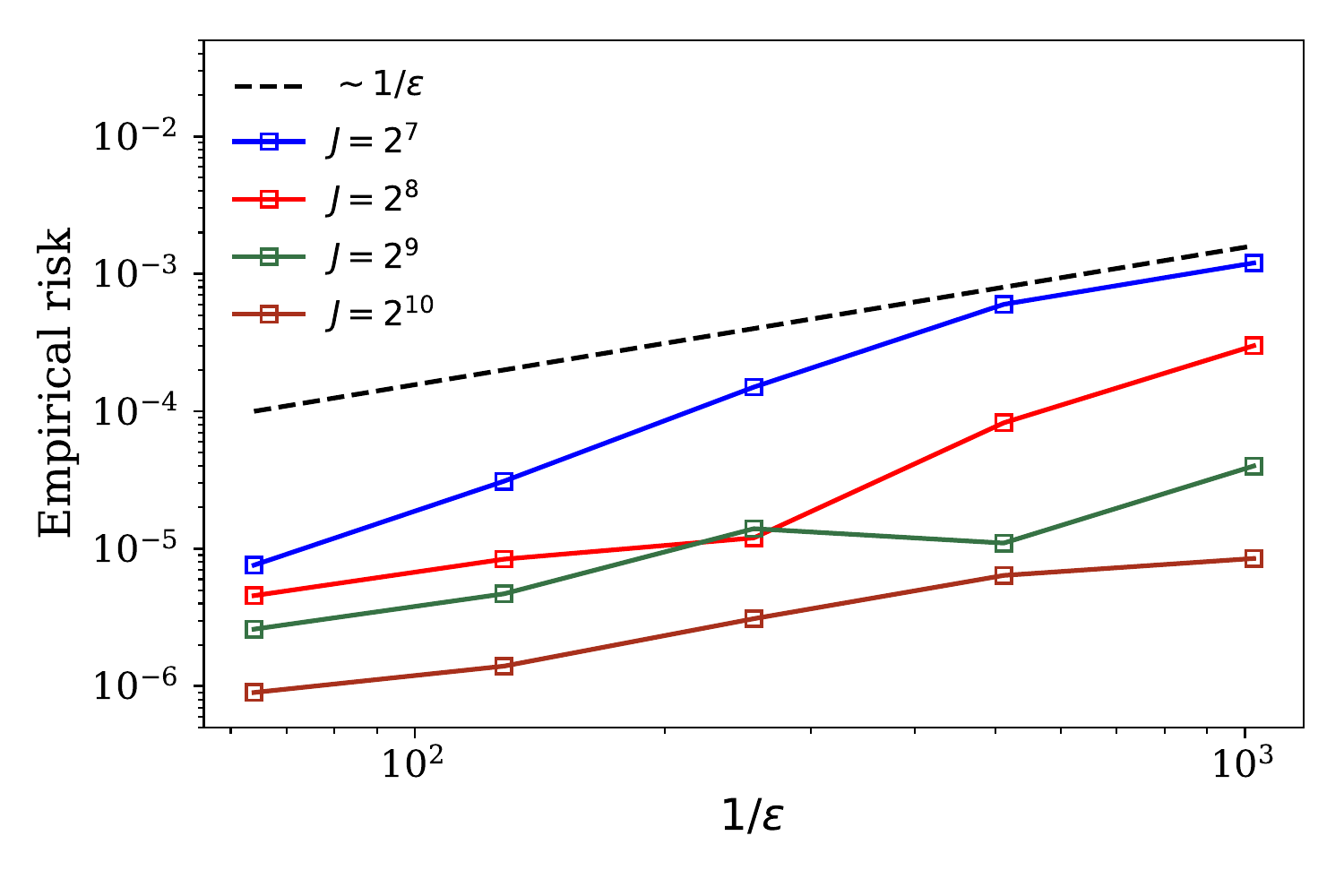}\\
\scriptsize{b)}
\end{minipage}
  \caption{Models trained on a different number of Shishkin mesh points and equidistant points are compared in terms of empirical risk, evaluated on a $2^{12}$ location point mesh. The subplots display empirical risk against the parameter ($\varepsilon$) on a $1/\varepsilon$ scale, with the dashed black line running parallel to $1/\varepsilon$. a): The empirical risk performance of models trained on different numbers of Shishkin mesh points. b): The performance of models trained on different numbers of equidistant points.}
 \label{fig3}
\end{figure}
\end{example}
\begin{example}
Consider a singularly perturbed problem with variable coefficients given by:
\begin{equation}
    \left\{ \begin{array}{l}
-\varepsilon u''+(x+1)u'+u=f,\quad x\in(0,1),\\
u(0)=u(1)=0.
\end{array} \right.
\label{exam2}
\end{equation}

The solution operator mapping from $f$ to the solution $u$ is known to be Lipshitz, according to Lemma \ref{lem_lip}. We use DeepONets to approximate this operator. We construct a DeepONet by training it on a dataset comprising $2.56\times 10^5$ triplets $(f,y,u)$ over 1000 epochs. The input data consists of 1000 randomly sampled $f$ and 256 distinct locations $y$ (specifically, a Shishkin mesh of 256 points), while the corresponding output data, $u$, is obtained using an up-wind scheme on the Shishkin mesh. Fig.~\ref{fig4} demonstrates the network's ability to accurately estimate the output solution $u$, not only for a randomly sampled input $f$ but also for an out-of-distribution input $f(x)=e^x$.

\begin{figure}[!tbh]
\centering
\begin{minipage}{0.325\textwidth}
\centering
\includegraphics[width=\textwidth]{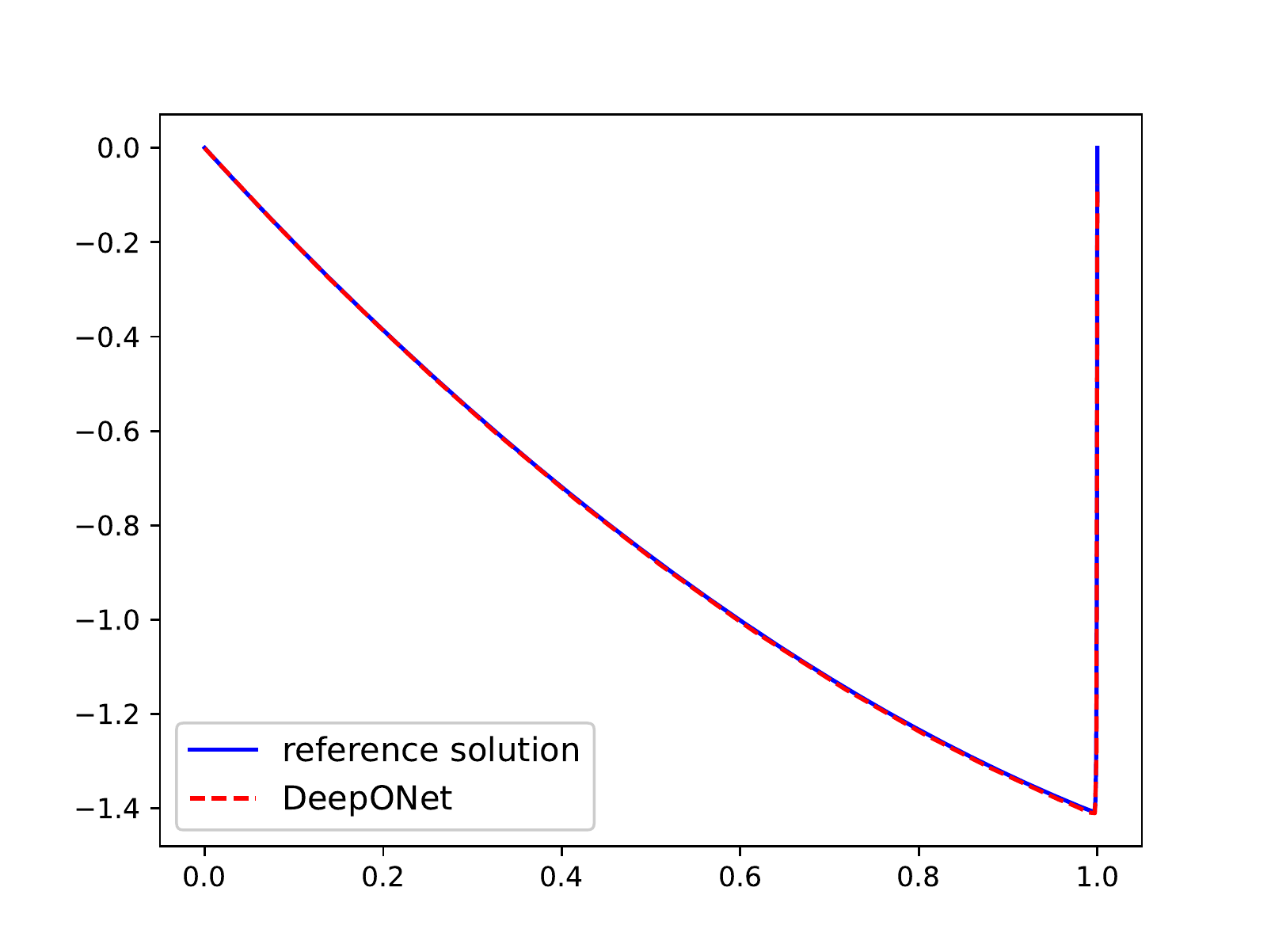}\\
\scriptsize{a)}
\end{minipage}
\begin{minipage}{0.325\textwidth}
\centering
\begin{minipage}{\textwidth}
\centering
\includegraphics[width=\textwidth]{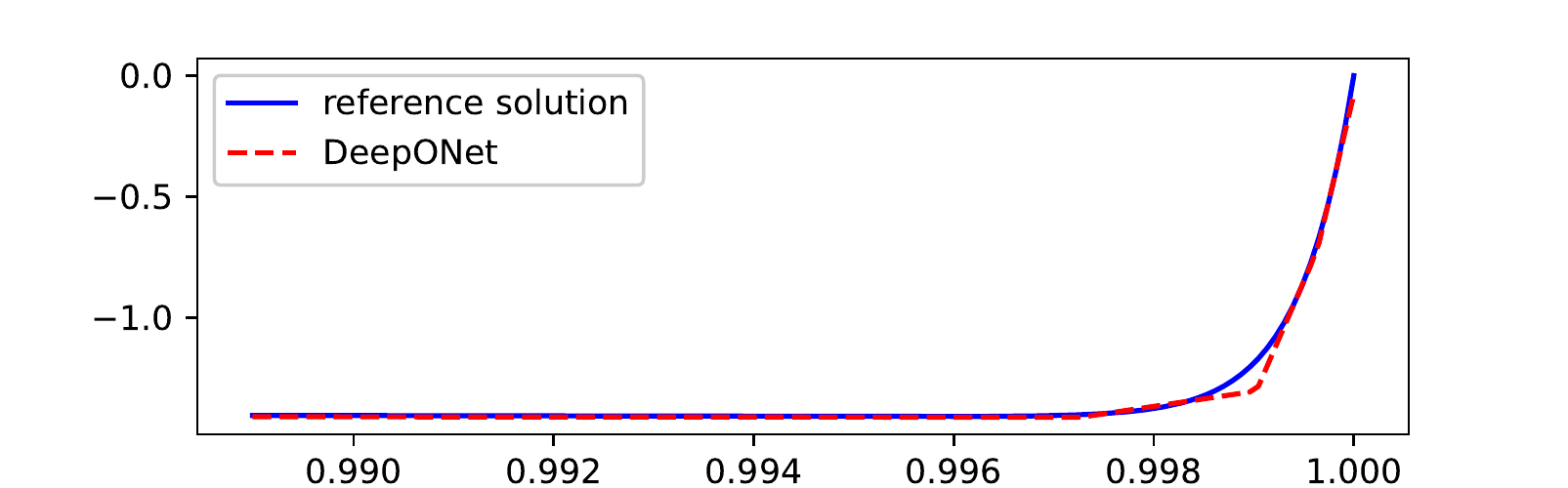}\\
\scriptsize{b)}
\end{minipage}
\\
\begin{minipage}{\textwidth}
\centering
\includegraphics[width=\textwidth]{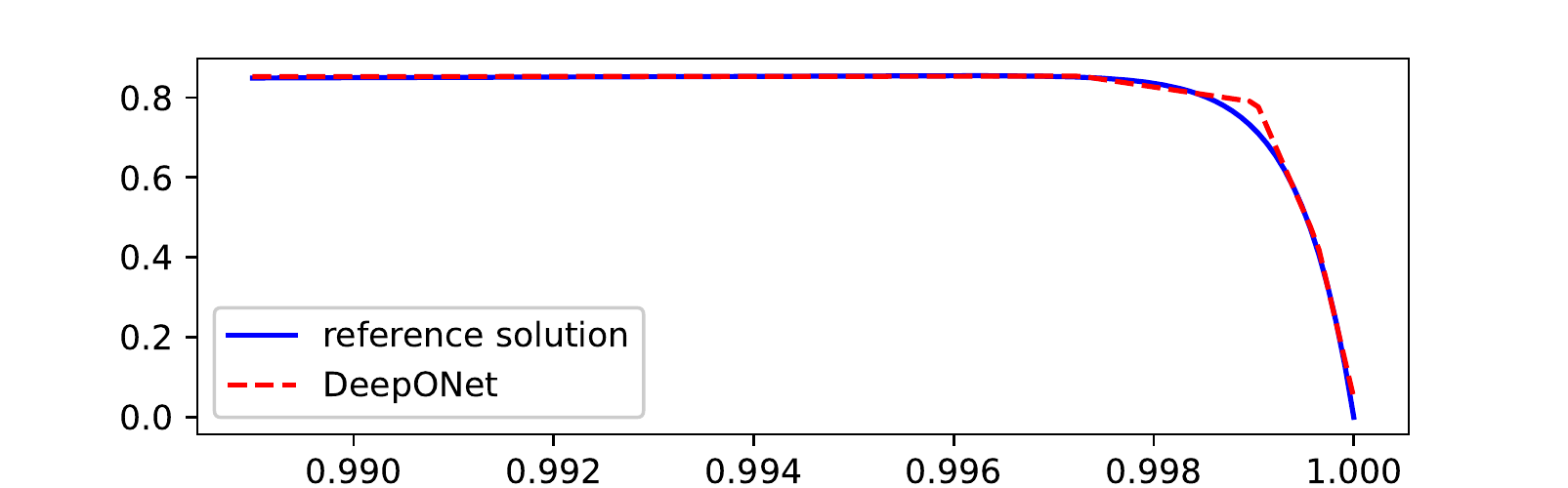}\\
\scriptsize{c)}
\end{minipage}
\end{minipage}
\begin{minipage}{0.325\textwidth}
\centering
\includegraphics[width=\textwidth]{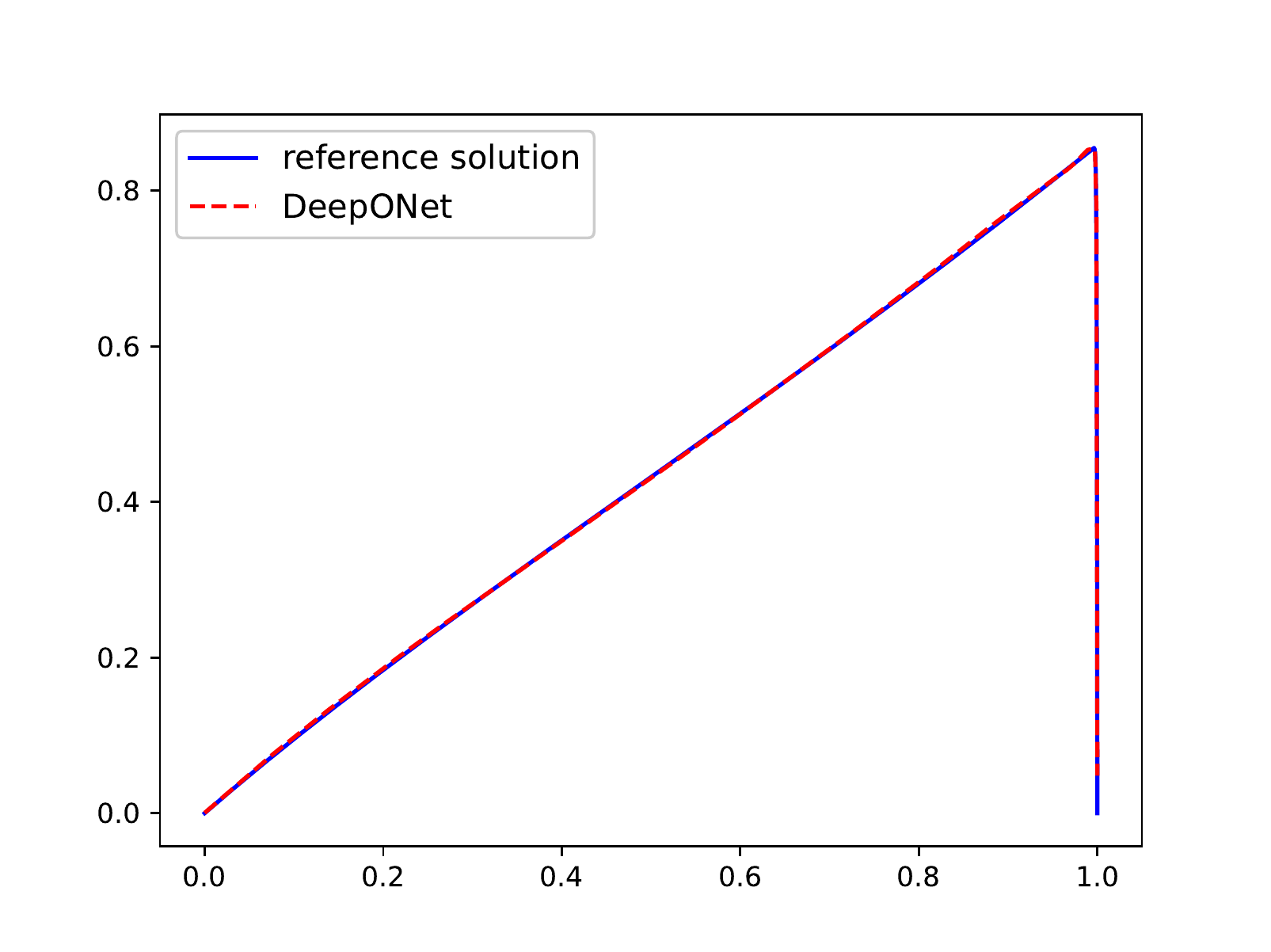}\\
\scriptsize{d)}
\end{minipage}
  \caption{The trained model's predictions and reference solutions for Eq.~\eqref{exam2} ($\varepsilon=0.001$) on Shishkin mesh. a): Prediction and reference solution on a random $f\sim\mu$ on a grid of 256 points, with the latter using an up-winding scheme on Shishkin mesh. b) Horizontal magnification of the region $[1-\sigma,1]$ in a). c) Horizontal magnification of the region $[1-\sigma,1]$ in d). d): the model's prediction and reference solution for $f(x)=e^x$ on a grid of 256 points.}
  \label{fig4}
\end{figure}

Based on Theorem \ref{empirical risk thm}, it can be observed that the empirical loss showcases a negative correlation with the quantity of $f$ samples as well as the number of Shishkin locations $y$. Importantly, this relationship holds irrespective of $1/\varepsilon$. Subsequently, we endeavor to establish the validity of this relationship for Eq.~\eqref{exam2}. The results obtained from pertinent experiments are illustrated in Fig.~\ref{fig5}.

\begin{figure}[!tbh]
\centering
\begin{minipage}{0.4\textwidth}
\centering
\includegraphics[width=\textwidth]{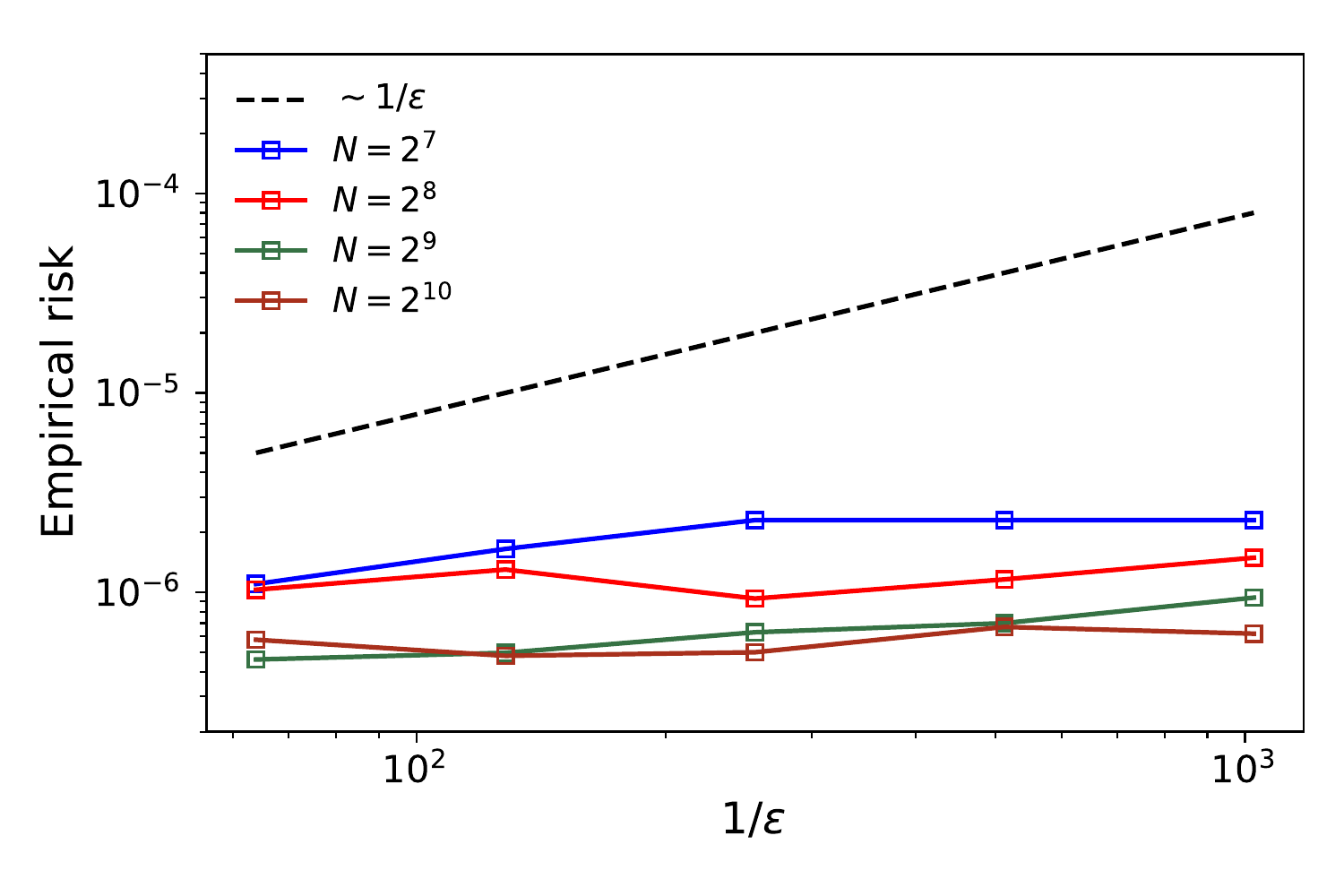}\\
\scriptsize{a)}
\end{minipage}
\begin{minipage}{0.4\textwidth}
\centering
\includegraphics[width=\textwidth]{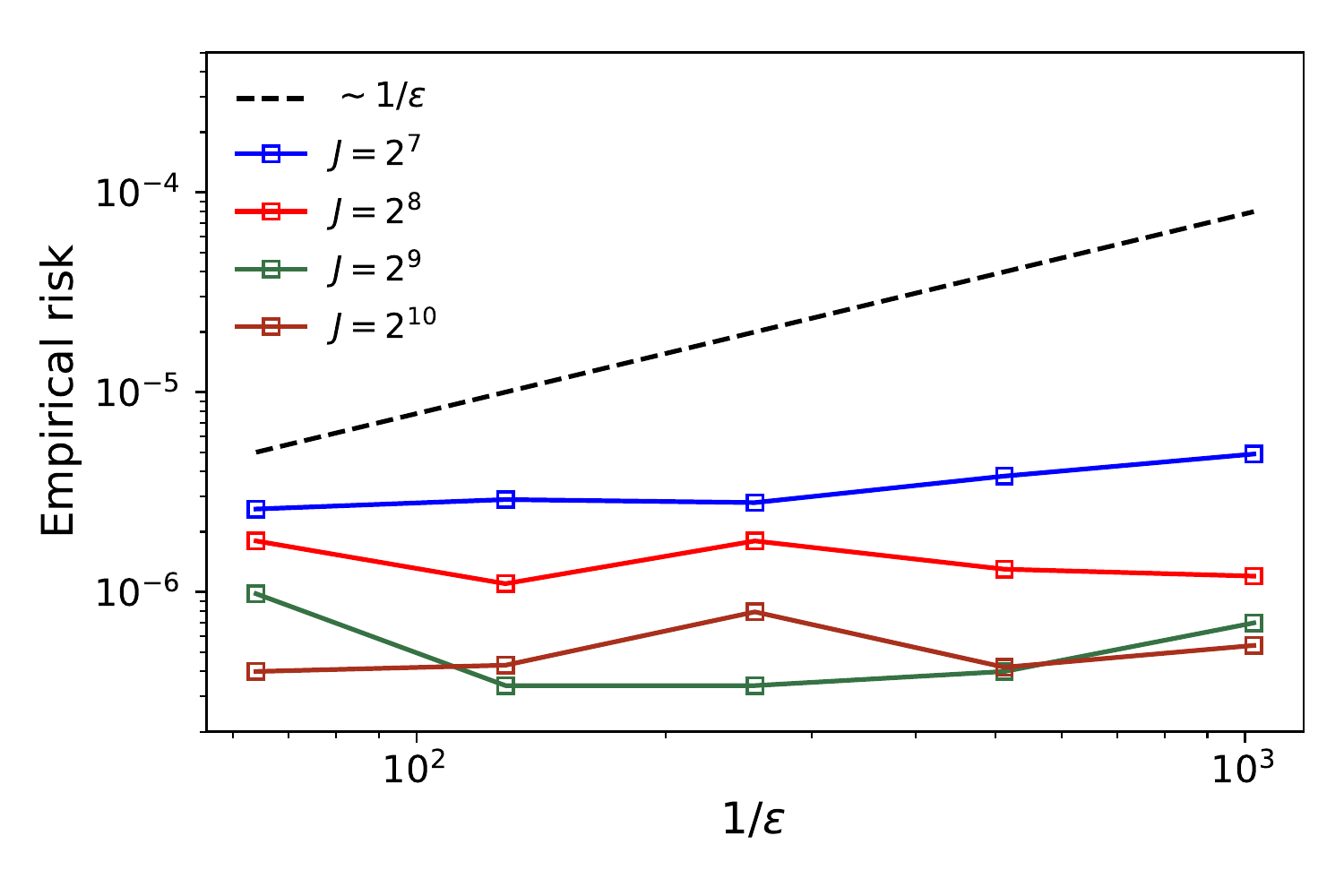}\\
\scriptsize{b)}
\end{minipage}
  \caption{The generalization ability of trained models with varying factors. a): Impact of varying $f$ sample numbers on model generalization with a fixed set of 512 Shishkin locations. Empirical risk is calculated over a sample of $2^{21}$ pairs $(f, y)$ consisting of 4096 randomly selected $f$ samples and 512 locations $y$. b): Impact of different numbers of locations $y$ on model generalization with a fixed set of 1000 $f$ samples. Empirical risk was calculated over $1000\times2^{12}$ sample pairs $(f, y)$.}
 \label{fig5}
\end{figure}
\end{example}

Fig.\ref{fig1} and Fig.\ref{fig4} illustrate DeepONets' effectiveness in capturing the boundary layer behavior near $x=1$ and performing remarkably accurate out-of-distribution predictions. These experimental findings suggest that DeepONets can serve as a reliable approximator for approximating the solution operator of one-dimensional singular perturbation problems. As we extend our focus to the two-dimensional problem, it is important to note that theoretical results currently exist only for the one-dimensional problem. However, we expect that DeepONet is also suitable for solving the two-dimensional singular perturbation problem, and subsequent numerical results validate this conjecture.
\begin{example}
Consider a 2D singularly perturbed problem:
    \begin{equation}
        \left\{ \begin{array}{l}
-\varepsilon\Delta u+u_x+u_y+u=f,\quad x\in\Omega=(0,1)\times(0,1),\\
u|_{\partial\Omega}=0.
\end{array} \right.\label{exam3}
    \end{equation}

In this study, we employ DeepONets to learn the solution operator for this problem with $\varepsilon=0.001$, that is, to map $f$ to the solution $u$ of Eq.~\eqref{exam3}. As the dimensionality of the problem increases, the size of the training data set needs to be significantly increased due to the heightened demand for locations. To facilitate better learning of the equation information by the operator network, penalization of certain locations may be necessary, especially when the network cannot be trained on enough data. In our study, we chose to penalize the boundary points in the loss function, giving rise to a loss function of the following form:
\begin{equation*}
    \mathcal{L}(\boldsymbol{\theta})=\frac{1}{NJ_r}\sum\limits_{n=1}^{N}\sum\limits_{j=1}^{J_r}|u_n(\boldsymbol{y}_j)-\mathscr{N}_{\boldsymbol{\theta}}(F_n)(\boldsymbol{y}_j)| ^2+\frac{\lambda}{NJ_b}\sum\limits_{n=1}^{N}\sum\limits_{j=1}^{J_b}|u_n(\boldsymbol{y}_j)-\mathscr{N}_{\boldsymbol{\theta}}(F_n)(\boldsymbol{y}_j)| ^2,
\end{equation*}
where $\lambda$ is the positive penalizing parameter.
\begin{figure}[!tbh]
\centering
\begin{minipage}{0.23\textwidth}
\centering
\includegraphics[width=\textwidth]{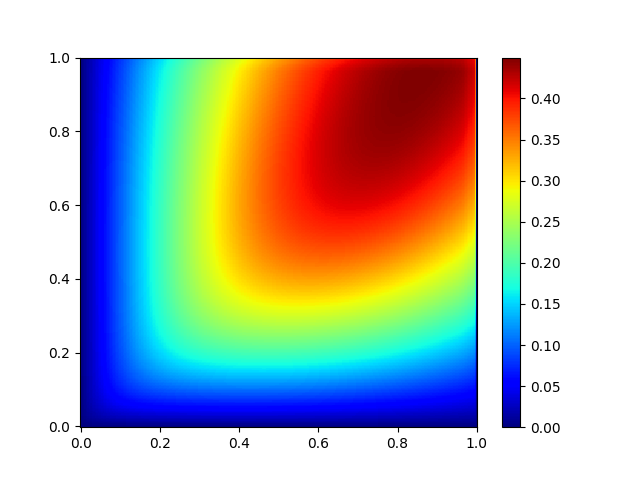}\\
\scriptsize{a)}
\end{minipage}
\begin{minipage}{0.23\textwidth}
\centering
\begin{minipage}{\textwidth}
\centering
\includegraphics[width=\textwidth]{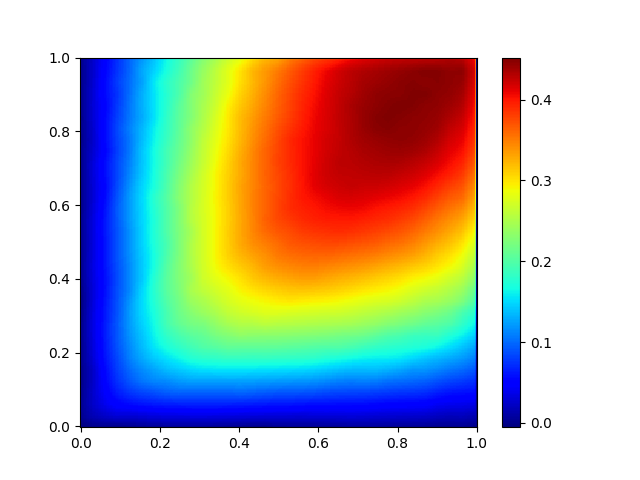}\\
\end{minipage}
\\
\begin{minipage}{\textwidth}
\centering
\includegraphics[width=\textwidth]{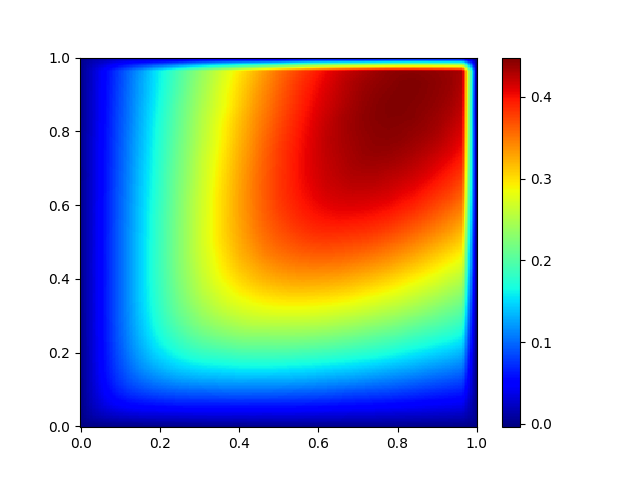}\\
\scriptsize{b)}
\end{minipage}
\end{minipage}
\begin{minipage}{0.23\textwidth}
\centering
\begin{minipage}{\textwidth}
\centering
\includegraphics[width=\textwidth]{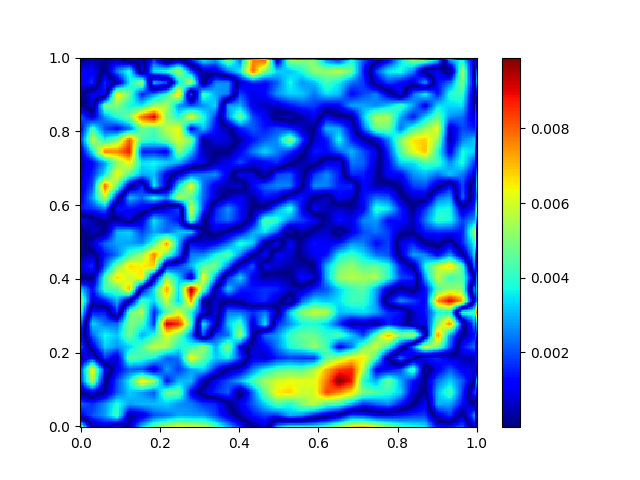}\\
\end{minipage}
\\
\begin{minipage}{\textwidth}
\centering
\includegraphics[width=\textwidth]{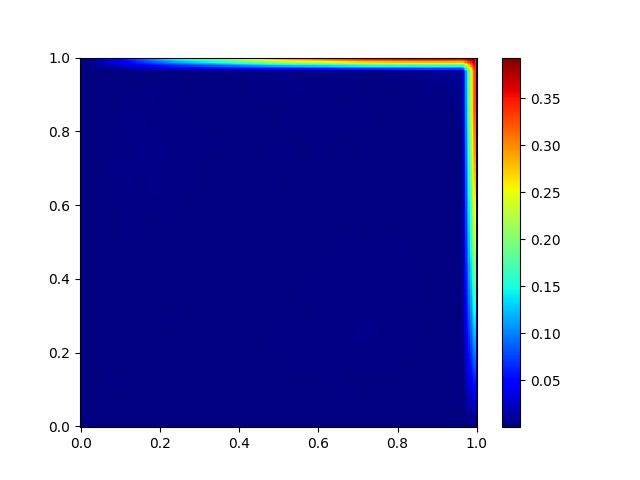}\\
\scriptsize{c)}
\end{minipage}
\end{minipage}
\begin{minipage}{0.23\textwidth}
\centering
\begin{minipage}{\textwidth}
\centering
\includegraphics[width=\textwidth]{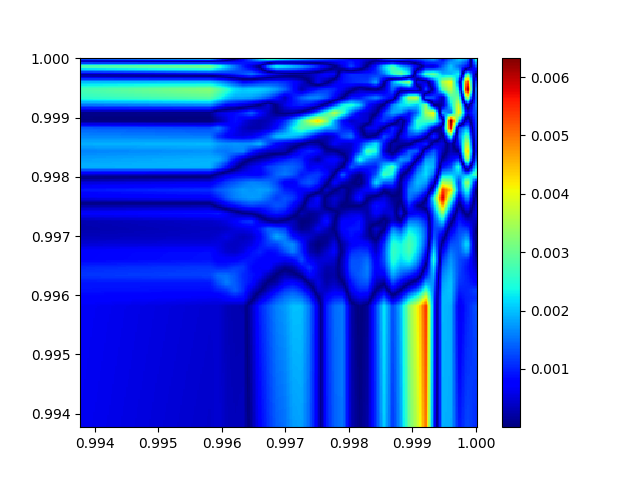}\\
\end{minipage}
\\
\begin{minipage}{\textwidth}
\centering
\includegraphics[width=\textwidth]{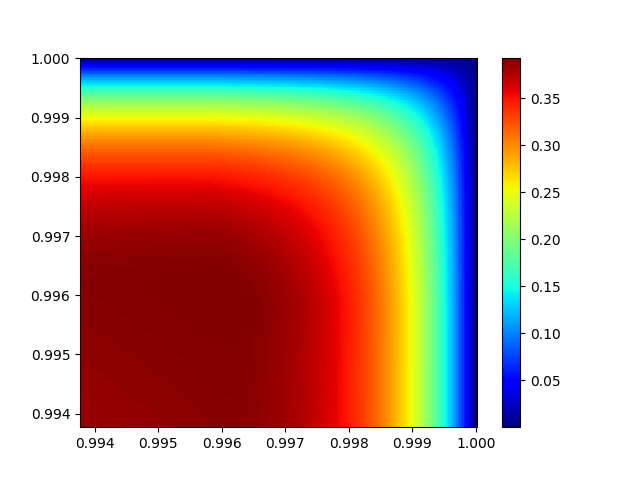}\\
\scriptsize{d)}
\end{minipage}
\end{minipage}
\caption{Comparison of performance between models trained on Shishkin and equidistant locations. a): The reference solution for a Gaussian random field $f\sim\mathcal{G}(0,k_{RBF})$ achieved through an up-winding scheme on the Shishkin mesh. b): The predictions of the trained models on $f$, where the upper subfigure displays the prediction of the model trained on the Shishkin positions and the lower subfigure shows the prediction of the model trained on the equidistant positions. c): The errors between the predicted results of the trained models and the reference solution. d): An enlargement of the $[1-\sigma,1]\times[1-\sigma,1]$ region in the corresponding subfigures of c).}
  \label{fig6}
\end{figure}

Subsequently, we proceed to train DeepONets utilizing a dataset of $1000\times64\times64$ triples $(f, \boldsymbol{y}, u)$, consisting of 1000 Gaussian random fields $f\sim\mathcal{G}(0,k_{RBF})$ and $64\times64$ locations $\boldsymbol{y}$, with $k_{RBF}$ denoting the radial basis function kernel. The training process is conducted over 2000 epochs, during which the penalizing parameter is set to $\lambda=0.1$. We conduct training on both equidistant and Shishkin locations, with a comparison of their performance shown in Fig.~\ref{fig6}. Our findings indicate that the network trained on equidistant locations effectively captures the behavior of the solution outside the boundary layer, but falls short in characterizing the behavior inside the boundary layer. In contrast, the network trained on Shishkin locations proves adept at capturing the behavior of the solution within the boundary layer.
\end{example}
\section{Conclusion and Future Directions\label{sec_conclusion}}
In this manuscript, we present a novel application of DeepONets for solving singular perturbation problems, specifically the one-dimensional convection-diffusion equation. We conduct a thorough analysis of the approximation error generated by the operator network when approximating the target solution operator and provide proof of its convergence rate with respect to the input and output dimensions of the branch net. Importantly, we demonstrate that this error is independent of $\varepsilon$, which is a notable finding. Since this error cannot be calculated directly, the empirical risk serves as a surrogate. Notably, the Shishkin mesh points are employed as samples on the interval $[0,1]$. We analyze the empirical risk and the corresponding generalization gap, both of which exhibit $\varepsilon$-uniform convergence with respect to the number of samples. Additionally, the Shishkin mesh points are utilized as locations for the loss function, and numerical experiments showcase the effectiveness of DeepONets in capturing the boundary layer behavior of solutions to singular perturbation problems. Our findings provide a novel approach to addressing this type of problem.

We investigate the steady-state singular perturbation problems in one dimension. However, we expect that our methodology could also be applied to problems in higher dimensions or those that involve time dependence. Despite their effectiveness as purely data-driven models, DeepONets fail to fully exploit the information encapsulated within the governing equation, leading to inefficiencies. A promising avenue for further development is to incorporate singular perturbation theory, as demonstrated in previous works such as \cite{BLpinn}. Such improvements have the potential to enhance the performance of traditional models significantly.

%%%% Acknowledgments %%%%%%%%
\section*{Acknowledgments}
Z.Y. Huang was partially supported by NSFC Projects No.~ 12025104, 81930119. Y. Li was partially supported by NSFC Projects No.~ 62106103 and Fundamental Research Funds for the Central Universities No.~ ILA22023.

%%%% Bibliography  %%%%%%%%%%

\section{Appendix\label{sec_appendix}}
\subsection{Proof of Lemma \ref{expect_interpolation_error}\label{appendix 7.1}}
\begin{proof}
By utilizing the condition, $\mathbb{E}_{f\sim\mu}|\frac{d}{dx}F(f)(x)|\le C(1+1/\varepsilon e^{-\alpha(1-x)/\varepsilon})$, we derive
\begin{equation*}
    \begin{aligned}
    \mathbb{E}_{f\sim\mu}\left|\int_0^1F(f)(x)dx-\sum\limits_{j=1}^{J}h_{j-1}F(f)(x_j)\right|&=\mathbb{E}_{f\sim\mu}\left|\sum\limits_{j=1}^{J}\int_{x_{j-1}}^{x_j}F(f)(x)-F(f)(x_j)dx\right|\\
    &=\mathbb{E}_{f\sim\mu}\left|\sum\limits_{j=1}^{J}\int_{x_{j-1}}^{x_j}\int_x^{x_j}\frac{d}{ds}F(f)(s)dsdx\right|\\
    &\le\sum\limits_{j=1}^{J}\int_{x_{j-1}}^{x_j}\int_x^{x_j}\mathbb{E}_{f\sim\mu}\left|\frac{d}{ds}F(f)(s)\right|dsdx\\
    &\le C\sum\limits_{j=1}^{J}\int_{x_{j-1}}^{x_j}\int_x^{x_j}(1+\frac{1}{\varepsilon} e^{-\alpha(1-s)/\varepsilon})dsdx.
    \end{aligned}
\end{equation*}

We consider two cases for specifying $\sigma$, based on its definition. when $\sigma=2\varepsilon\ln{J}/\alpha$, namely $2\varepsilon\ln{J}/\alpha\le1/2$, we obtain the following result for $x_j \leq 1 - \sigma$:
\begin{equation*}
\begin{aligned}
    \int_{x_{j-1}}^{x_j} \int_x^{x_j} (1+\frac{1}{\varepsilon} e^{-\alpha(1-s)/\varepsilon}) dsdx =& \frac{h_{j-1}^2}{2}-\frac{\varepsilon}{\alpha^2}(e^{-\alpha(1-x_j)/\varepsilon}-e^{-\alpha(1-x_{j-1})/\varepsilon})+\frac{h_{j-1}}{\alpha}e^{-\alpha(1-x_j)/\varepsilon} \\
    \le& \frac{h_{j-1}^2}{2}+\frac{h_{j-1}}{\alpha}e^{-\alpha(1-x_j)/\varepsilon} \\
    \le & \frac{h_{j-1}^2}{2}+\frac{h_{j-1}}{\alpha J^2}\\
    \le& 2(1+1/\alpha)\frac{1}{J^2}.
\end{aligned}
\end{equation*}
For $x_{j-1} >1-\sigma$, we have
\begin{equation*}
\int_{x_{j-1}}^{x_j}  \int_x^{x_j} (1+\frac{1}{\varepsilon} e^{-\alpha(1-s)/\varepsilon}) dsdx\le \frac{2}{\varepsilon} \int_{x_{j-1}}^{x_j}  \int_x^{x_j} dsdx= \frac{h_{j-1}^2}{\varepsilon}\le \frac{4\ln{J}}{\alpha J^2},
\end{equation*}
which implies that
\begin{equation*}
    C\sum\limits_{j=1}^{J}\int_{x_{j-1}}^{x_j}\int_x^{x_j}(1+\frac{1}{\varepsilon} e^{-\alpha(1-s)/\varepsilon})dsdx \le 2C(1+\frac{2}{\alpha})\frac{\ln J}{J}.
\end{equation*}

The case $\sigma = \frac{1}{2}$ is straightforward, as we directly obtain:
\begin{equation*}
C\sum\limits_{j=1}^{J}\int_{x_{j-1}}^{x_j}\int_x^{x_j}(1+\frac{1}{\varepsilon} e^{-\alpha(1-s)/\varepsilon})dsdx\le \frac{2C}{\varepsilon}\sum\limits_{j=1}^{J}\int_{x_{j-1}}^{x_j}\int_x^{x_j} dsdx = \frac{C}{\varepsilon J}<2C(1+\frac{2}{\alpha})\frac{\ln J}{J}.
\end{equation*}
Then the Lemma is proved.
\qed
\end{proof}
\subsection{Proof of Lemma \ref{lem_diff124}\label{appendix 7.2}}
    \begin{proof}
        To facilitate our proof procedure, we first establish a lemma that is essential to our subsequent analysis.
    \begin{lemma}
    [cf.~Stuart~\cite{stuart_inverse}]If $\mathcal{N}(0,\mathcal{C})$ is a Gaussian measure on a Hilbert space  $\mathcal{H}$, then for any integer $q$, there is a constant $C=C_q\ge 0$ such that, for $x\sim\mathcal{N}(0,\mathcal{C})$, 
    \begin{equation*}
        \mathbb{E}\|x\|^{2q}\le C_q(\mathbb{E}\|x\|^2)^q.
    \end{equation*}
    \label{g.m_inequ}
\end{lemma}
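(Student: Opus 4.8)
The plan is to reduce everything to the Karhunen--Lo\`eve expansion of $x$ and a direct multinomial estimate. Let $\{\lambda_k,\phi_k\}_{k\ge1}$ be the eigenpairs of $\mathcal{C}$; since $\mathcal{N}(0,\mathcal{C})$ is a genuine Gaussian measure on the Hilbert space $\mathcal{H}$, the operator $\mathcal{C}$ is trace-class, so $\sum_k\lambda_k=\mathbb{E}\|x\|^2<\infty$. Writing $x=\sum_k\sqrt{\lambda_k}\,\xi_k\phi_k$ with $\{\xi_k\}$ i.i.d.\ $\mathcal{N}(0,1)$ and using orthonormality of $\{\phi_k\}$, we get $\|x\|^2=\sum_k\lambda_k\xi_k^2$. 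To sidestep convergence issues I would first work with the truncation $\|x\|_N^2:=\sum_{k=1}^N\lambda_k\xi_k^2$, prove the desired bound with a constant not depending on $N$, and then let $N\to\infty$.

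For fixed $N$ and a positive integer $q$, the multinomial theorem gives $(\|x\|_N^2)^q=\sum_{|\alpha|=q}\binom{q}{\alpha}\prod_{k=1}^N\lambda_k^{\alpha_k}\xi_k^{2\alpha_k}$, the sum being over length-$N$ multi-indices $\alpha$ with $|\alpha|=q$. Taking expectations and using independence, $\mathbb{E}(\|x\|_N^2)^q=\sum_{|\alpha|=q}\binom{q}{\alpha}\Big(\prod_k\lambda_k^{\alpha_k}\Big)\prod_k\mathbb{E}[\xi_k^{2\alpha_k}]$. Since $\mathbb{E}[\xi^{2m}]=(2m-1)!!\le 2^m m!$ and $\binom{q}{\alpha}=q!/\prod_k\alpha_k!$, every summand is at most $q!\,2^{|\alpha|}\prod_k\lambda_k^{\alpha_k}=2^q q!\prod_k\lambda_k^{\alpha_k}$; summing over $\alpha$ and invoking the multinomial theorem once more yields $\mathbb{E}(\|x\|_N^2)^q\le 2^q q!\Big(\sum_{k=1}^N\lambda_k\Big)^q\le 2^q q!\,(\mathbb{E}\|x\|^2)^q$. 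Letting $N\to\infty$, the left-hand side increases to $\mathbb{E}\|x\|^{2q}$ by monotone convergence, which proves the lemma with the explicit constant $C_q=2^q q!$.

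There is no genuinely hard step here; the only points requiring care are the trace-class property of $\mathcal{C}$ (built into the definition of a Gaussian measure on a Hilbert space, and what makes $\mathbb{E}\|x\|^2$ finite) and the exchange of limit and expectation, which is immediate from nonnegativity via monotone convergence on the truncations. An alternative route is to invoke Fernique's theorem for finiteness of all moments and then deduce the scaling $\mathbb{E}\|x\|^{2q}\le C_q(\mathbb{E}\|x\|^2)^q$ from the homogeneity of both sides under $\mathcal{C}\mapsto t\mathcal{C}$, but the computation above is both shorter and quantitative.
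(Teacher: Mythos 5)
Your proof is correct, and it is worth noting that the paper itself does not prove this lemma at all: it is quoted as a known fact with a pointer to Stuart's survey, where it rests on general Gaussian-measure theory (Fernique-type moment bounds). Your argument is therefore a genuinely different, self-contained route: diagonalize $\mathcal{C}$, write $\|x\|^2=\sum_k\lambda_k\xi_k^2$ with i.i.d.\ standard normals (Parseval plus the fact that the coordinates $\langle x,\phi_k\rangle$ are uncorrelated, hence independent, Gaussians), truncate, expand by the multinomial theorem, and use $\mathbb{E}[\xi^{2m}]=(2m-1)!!\le 2^m m!$ so that the factor $\prod_k\alpha_k!$ cancels against $\binom{q}{\alpha}$, leaving each summand bounded by $2^q q!\prod_k\lambda_k^{\alpha_k}$; since $\sum_{|\alpha|=q}\prod_k\lambda_k^{\alpha_k}\le\bigl(\sum_k\lambda_k\bigr)^q$ and $\sum_k\lambda_k=\operatorname{tr}\mathcal{C}=\mathbb{E}\|x\|^2$ (trace-class being part of what it means for $\mathcal{N}(0,\mathcal{C})$ to be a Gaussian measure on $\mathcal{H}$), monotone convergence in $N$ gives the claim with the explicit constant $C_q=2^q q!$. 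Each of these steps checks out, and the truncation cleanly avoids any convergence issue with the Karhunen--Lo\`eve series. What your approach buys over the citation is an elementary, quantitative constant; what the Fernique route buys is greater generality (e.g.\ Banach-space norms and exponential integrability), which is not needed for the way the lemma is used in the paper, namely to bound $\mathbb{E}X_1^2$ by $(\mathbb{E}X_1)^2$ up to a constant in the proof of Lemma \ref{lem_diff124}.
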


Let $X_n = \|\mathscr{G}(F_n) - \hspace{0.5em}\overline{\kern-0.5em\mathscr{N}\kern+0.1em}\hspace{-0.1em}(F_n)\|_{L^2}^2$, and define $S^N = \frac{1}{N}\sum\limits_{n=1}^N X_n$. Using basic inequalities such as the H$\ddot{\text{o}}$lder inequality, we obtain:
    \begin{equation}
        \uppercase\expandafter{\romannumeral1}=\mathbb{E}\left|\sqrt{S^N}-\sqrt{\mathbb{E}S^N}\right|\le \left(\mathbb{E}|S^N-\mathbb{E}(S^N)|^2\right)^{1/4}=\frac{1}{N^{1/4}}\left(\mathbb{E}(X_1-\mathbb{E}X_1)^2\right)^{1/4}.\label{diff1}
    \end{equation}
Here, $\mathbb{E}X_1$ is closely related to the approximation error on $\hspace{0.5em}\overline{\kern-0.5em\mathscr{N}\kern+0.1em}\hspace{-0.1em}$, specifically $\mathbb{E}X_1 = \widehat{\mathscr{E}}(\hspace{0.5em}\overline{\kern-0.5em\mathscr{N}\kern+0.1em}\hspace{-0.1em})^2$. For $\mathbb{E}X_1^2$, we have:
\begin{equation*}
    \mathbb{E}X_1^2=\int_{L^2}\|\mathscr{G}(f)-\hspace{0.5em}\overline{\kern-0.5em\mathscr{N}\kern+0.1em}\hspace{-0.1em}(f)\|_{L^2}^4d\mu(f)
        =\int_{L^2}\|f\|_{L^2}^4d(\mathscr{G}-\hspace{0.5em}\overline{\kern-0.5em\mathscr{N}\kern+0.1em}\hspace{-0.1em})_{\#}\mu(f),
\end{equation*}
where $(\mathscr{G} - \hspace{0.5em}\overline{\kern-0.5em\mathscr{N}\kern+0.1em}\hspace{-0.1em})_{\#}\mu$ is the push-forward measure of $\mu$ under $\mathscr{G}-\hspace{0.5em}\overline{\kern-0.5em\mathscr{N}\kern+0.1em}\hspace{-0.1em}$. As $\mathscr{G}$ and $\hspace{0.5em}\overline{\kern-0.5em\mathscr{N}\kern+0.1em}\hspace{-0.1em}$ are bounded linear operators, $(\mathscr{G} - \hspace{0.5em}\overline{\kern-0.5em\mathscr{N}\kern+0.1em}\hspace{-0.1em})_{\#}\mu$ is a Gaussian measure. By applying Lemma \ref{g.m_inequ} to the equation above and combining it with \eqref{diff1}, we can derive the following result:
\begin{equation*}
\uppercase\expandafter{\romannumeral1}\lesssim\widehat{\mathscr{E}}(\hspace{0.5em}\overline{\kern-0.5em\mathscr{N}\kern+0.1em}\hspace{-0.1em})/N^{1/4}.
\end{equation*}

  Next, we can bound the term \uppercase\expandafter{\romannumeral2} as follows:
        \begin{equation*}
        \begin{aligned}
\uppercase\expandafter{\romannumeral2}\le&\mathbb{E}\left(\frac{1}{N}\sum\limits_{n=1}^N\|\hspace{0.5em}\overline{\kern-0.5em\mathscr{N}\kern+0.1em}\hspace{-0.1em}(F_n)-\widetilde{\mathscr{N}}(F_n)\|_{L^2}^2\right)^{1/2}\\
        =&\mathbb{E}\left(\frac{1}{N}\sum\limits_{n=1}^N\left\|\sum\limits_{q=-K}^{K}(\mathscr{G}\circ\mathscr{D}\circ\mathscr{E}(F_n),\widetilde{\tau}_q)(\tau_q-\widetilde{\tau}_q)\right\|_{L^2}^2\right)^{1/2}\\
        \le&\eta\mathbb{E}\left(\frac{1}{N}\sum\limits_{n=1}^N\|\mathscr{G}\circ\mathscr{D}\circ\mathscr{E}(F_n)\|_{L^2}^2\right)^{1/2}\\
        \le&\eta Lip(\mathscr{G})\left(\mathbb{E}\|\mathscr{D}\circ\mathscr{E}(F_1)\|_{L^2}^2\right)^{1/2}.
        \end{aligned}
    \end{equation*}
    Note that  $\mathscr{D}\circ\mathscr{E}(e^{2\pi ikx})=e^{2\pi ikx}\ (|k|\le M)$, where $P_M$ is the projection operator onto $span\{e^{2\pi ikx}:|k|\le M\}$. Thus, we have $\mathscr{D}\circ\mathscr{E}\circ P_M=P_M$, and since $\mathscr{D}\circ\mathscr{E}$ is linear, we get
\begin{equation*}
\mathscr{D}\circ\mathscr{E}=\mathscr{D}\circ\mathscr{E}\circ P_M+\mathscr{D}\circ\mathscr{E}\circ P_M^{\perp}=P_M+\mathscr{D}\circ\mathscr{E}\circ P_M^{\perp}.
\end{equation*}
Hence,
        \begin{equation*}
        \begin{aligned}
       \left(\mathbb{E}\|\mathscr{D}\circ\mathscr{E}(F_1)\|_{L^2}^2\right)^{1/2}\le&\left(\mathbb{E}\|P_MF_1\|_{L^2}^2\right)^{1/2}+\left(\mathbb{E}\|\mathscr{D}\circ\mathscr{E}\circ (P_M^{\perp}F_1)\|_{L^2}^2\right)^{1/2}\\
       \le&\left(\mathbb{E}\|F_1\|_{L^2}^2\right)^{1/2}+\sqrt{\sum\limits_{|k|>M}\lambda_k}\\
       =&\sqrt{\sum\limits_{k\in\mathbb{Z}}\lambda_k}+\sqrt{\sum\limits_{|k|>M}\lambda_k}\\
       \le&2\sqrt{\sum\limits_{k\in\mathbb{Z}}\lambda_k}\le 2.
        \end{aligned}
    \end{equation*}
By combining the above results, we obtain 
\begin{equation*}
    \uppercase\expandafter{\romannumeral2}\lesssim \eta =p^{-n}.
\end{equation*}

Similarly, the estimate of \uppercase\expandafter{\romannumeral4} follows the same procedure as for \uppercase\expandafter{\romannumeral2}, yielding 
\begin{equation*}
    \uppercase\expandafter{\romannumeral4}\lesssim \eta =p^{-n}.
\end{equation*}
By combining the above estimates for \uppercase\expandafter{\romannumeral1}, \uppercase\expandafter{\romannumeral2}, and \uppercase\expandafter{\romannumeral4}, we arrive at the final result.

    \qed
    \end{proof}
\subsection{Proof of Lemma \ref{deri_bound}\label{appendix 7.3}}
The proof of this theorem relies on a series of lemmas, which we will now present along with their corresponding proofs.
\begin{lemma}
Let $f$ drawn from the measure $\mu$, and let $\widehat{f}_k$ and $c_k$ denote the Fourier coefficients and discrete Fourier coefficients of $f$, respectively. Specifically, we have, 
\begin{equation*}
    \widehat{f}_k=\int_0^1 f(x)e^{-i2\pi kx}dx,\ c_k=\frac{1}{m}\sum\limits_{j=1}^mf(x_j)e^{-\frac{i2\pi jk}{m}}.
\end{equation*}
For $0\le k\le M$, it can be shown that
\begin{equation*}
   (\mathbb{E}_{f\sim\mu}|\widehat{f}_k|^2)^{1/2}\le Ce^{-\pi^2l^2k^2}, 
\end{equation*}
\begin{equation*}
    (\mathbb{E}_{f\sim\mu}|c_k|^2)^{1/2}\le Ce^{-\pi^2l^2k^2},
\end{equation*}
\begin{equation*}
    \left(\mathbb{E}_{f\sim\mu}|\widehat{f}_k-c_k|^2\right)^{1/2}\le \frac{C}{m}e^{-\pi^2l^2k^2},
\end{equation*}
    where $C$ is a constant that solely depends on $l$.\label{lem_fourier coff}
\end{lemma}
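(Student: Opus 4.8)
The plan is to reduce every quantity in the statement to an explicit series in the eigenvalues $\lambda_n=\sqrt{2\pi}\,l\,e^{-2\pi^2n^2l^2}$ by substituting the Karhunen--Lo\`eve expansion \eqref{gaussian process}, $f=\sum_{n\in\mathbb{Z}}\sqrt{\lambda_n}\,\xi_n\,e^{i2\pi nx}$, into the definitions of $\widehat f_k$ and $c_k$. For $\widehat f_k$ this is immediate: since $\{e^{i2\pi nx}\}_{n\in\mathbb{Z}}$ is orthonormal in $L^2([0,1])$ and the series converges in quadratic mean uniformly, one gets $\widehat f_k=\sqrt{\lambda_k}\,\xi_k$, hence $\mathbb{E}_{f\sim\mu}|\widehat f_k|^2=\lambda_k=\sqrt{2\pi}\,l\,e^{-2\pi^2l^2k^2}$, and taking the square root yields the first inequality with $C=(2\pi)^{1/4}l^{1/2}$.

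For $c_k=\frac1m\sum_{j=1}^m f(x_j)e^{-i2\pi jk/m}$ with $x_j=j/m$, I would insert the expansion, interchange the finite sum over $j$ with the series over $n$, and use the discrete orthogonality identity $\frac1m\sum_{j=1}^m e^{i2\pi(n-k)j/m}$, which equals $1$ when $n\equiv k\pmod{m}$ and $0$ otherwise. This produces the aliasing formula $c_k=\sum_{n\in\mathbb{Z}}\sqrt{\lambda_{k+nm}}\,\xi_{k+nm}$, so that $\mathbb{E}|c_k|^2=\sum_{n\in\mathbb{Z}}\lambda_{k+nm}$, and since $\widehat f_k-c_k=-\sum_{n\neq0}\sqrt{\lambda_{k+nm}}\,\xi_{k+nm}$, also $\mathbb{E}|\widehat f_k-c_k|^2=\sum_{n\neq0}\lambda_{k+nm}$.

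The core estimate is then a uniform lower bound on $(k+nm)^2-k^2$ for $0\le k\le M$ and $n\neq0$ (recall $m=2M+1$). Writing $(k+nm)^2-k^2=nm(2k+nm)$: for $n\ge1$ this is at least $n^2m^2$ because $k\ge0$; for $n=-j$ with $j\ge1$ it equals $jm(jm-2k)\ge jm$, using $2k\le m-1<jm$. Thus $(k+nm)^2\ge k^2$ always, the exponent grows at least linearly in $|n|$ (quadratically, with coefficient $m^2$, for $n\ge1$), and the smallest off-diagonal value, attained at $n=-1$, $k=M$, is $m$. Consequently
\begin{equation*}
\sum_{n\neq0}\lambda_{k+nm}=\lambda_k\sum_{n\neq0}e^{-2\pi^2l^2\left((k+nm)^2-k^2\right)}\le C_l\,e^{-2\pi^2l^2 m}\,\lambda_k\le \frac{C_l'}{m^2}\,\lambda_k,
\end{equation*}
where $C_l$ bounds the convergent geometric-type series uniformly in $m$ and the last step uses $\sup_{m}m^2e^{-2\pi^2l^2m}<\infty$; both constants depend only on $l$. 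Adding the $n=0$ term gives $\mathbb{E}|c_k|^2\le(1+C_l')\lambda_k$. Taking square roots in the two displays and recalling $\lambda_k=\sqrt{2\pi}\,l\,e^{-2\pi^2l^2k^2}$ yields the second and third inequalities.

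I expect the only delicate point to be the uniformity over the full range $0\le k\le M$: the term $n=-1$ is the tightest, and one must invoke $k\le M=(m-1)/2$ to ensure $m-2k\ge1$, so that this term still contributes a factor $e^{-2\pi^2l^2m}$, which is exactly what produces the $1/m$ improvement in the third bound; were $k$ allowed up to $m/2$ this term would be $O(1)$ and that gain would fail. Everything else is routine manipulation of geometric series.
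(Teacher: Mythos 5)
Your proof is correct and follows essentially the same route as the paper's: the Karhunen--Lo\`eve expansion gives $\widehat f_k=\sqrt{\lambda_k}\,\xi_k$, discrete orthogonality yields the aliasing identity $c_k=\sum_{n}\widehat f_{k+nm}$, and the same exponent-gap estimate $(k-nm)^2-k^2\ge nm$ (valid precisely because $2k\le m-1$) controls the aliased terms. The only cosmetic differences are that you compute $\mathbb{E}|\widehat f_k-c_k|^2$ exactly by independence of the $\xi_n$ where the paper uses the triangle inequality on the $L^2(\mu)$ norms, and you extract the $1/m$ gain from $\sup_m m^2e^{-2\pi^2l^2m}<\infty$ where the paper compares the tail sums with integrals; both yield constants depending only on $l$.
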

\begin{proof}
By the K-L expansion, we have the representation of $f\sim\mu$ as
\begin{equation*}
    f(x)=\sum\limits_{n=-\infty}^{\infty}\sqrt{\lambda_n}\xi_n\phi_n(x),
\end{equation*}
where $\lambda_n=\sqrt{2\pi}l e^{-2\pi^2n^2l^2}$, $\phi_n(x)=e^{i2\pi nx}$ and $\xi_n\sim\mathcal{N}(0,1)$ are i.i.d. Gaussian random variables. Then
\begin{equation*}
   (\mathbb{E}_{f\sim\mu}|\widehat{f}_k|^2)^{1/2}= \left(\mathbb{E}|\sqrt{\lambda_k}\xi_k|^2\right)^{1/2}=\sqrt{\lambda_k}=(\sqrt{2\pi}l)^{1/2}e^{-\pi^2k^2l^2}. 
\end{equation*}
On the other hand, we can obtain
\begin{equation*}
    \begin{aligned}
        \left(\mathbb{E}_{f\sim\mu}|\widehat{f}_k-c_k|^2\right)^{1/2}&=\left(\mathbb{E}_{f\sim\mu}\left|\widehat{f}_k-\frac{1}{m}\sum\limits_{j=1}^mf(x_j)e^{-\frac{i2\pi jk}{m}}\right|^2\right)^{1/2}\\
        &=\left(\mathbb{E}_{f\sim\mu}\left|\widehat{f}_k-\frac{1}{m}\sum\limits_{j=1}^m\sum\limits_{n=-\infty}^{\infty}\widehat{f}_ne^{\frac{i2\pi jn}{m}}e^{-\frac{i2\pi jk}{m}}\right|^2\right)^{1/2}\\
        &=\left(\mathbb{E}_{f\sim\mu}\left|\widehat{f}_k-\sum\limits_{n=-\infty}^{\infty}\widehat{f}_{k+nm}\right|^2\right)^{1/2}\\
        &\lesssim l^{1/2}\sum\limits_{n=1}^{\infty}e^{-\pi^2l^2(k+nm)^2}+e^{-\pi^2l^2(k-nm)^2},
        \end{aligned}
\end{equation*}
where
\begin{equation*}
    \sum\limits_{n=1}^{\infty}e^{-\pi^2l^2(k+nm)^2}\le \int_0^{+\infty}e^{-\pi^2l^2(k+xm)^2}dx=\frac{1}{\pi lm}\int_{k\pi l}^{+\infty}e^{-y^2}dy\lesssim\frac{1}{ml}e^{-\pi^2l^2k^2},
\end{equation*}
and considering that $0\le k\le M$, we have
\begin{equation*}
    \sum\limits_{n=1}^{\infty}e^{-\pi^2l^2(k-nm)^2}\le e^{-\pi^2l^2k^2}\sum\limits_{n=1}^{\infty}e^{-\pi^2l^2nm}\le e^{-\pi^2l^2k^2}\int_0^{+\infty}e^{-\pi^2l^2xm}dx=\frac{1}{\pi^2l^2m}e^{-\pi^2l^2k^2},
\end{equation*}
thus
\begin{equation*}
    \left(\mathbb{E}_{f\sim\mu}|\widehat{f}_k-c_k|^2\right)^{1/2}\lesssim \frac{1}{m}(l^{-1/2}+l^{-3/2})e^{-\pi^2l^2k^2}.
\end{equation*}
    
    Furthermore, $c_k$ can be bounded by:
\begin{equation*}
    (\mathbb{E}_{f\sim\mu}|c_k|^2)^{1/2}\le\left(\mathbb{E}_{f\sim\mu}|\widehat{f}_k-c_k|^2\right)^{1/2}+(\mathbb{E}_{f\sim\mu}|\widehat{f}_k|^2)^{1/2}\le Ce^{-\pi^2l^2k^2},
\end{equation*}
where $C$ is a constant that depends only on $l$.
    \qed
\end{proof}

Directly applying Lemma \ref{lem_fourier coff}, we obtain the following corollary for $k\le M$:
\begin{corollary}
\begin{equation*}
    \left(\mathbb{E}_{f\sim\mu}\left|f-\sum\limits_{q=-k}^kc_q\phi_q(x)\right|^2\right)^{1/2}\le \frac{C}{k},\ \left(\mathbb{E}_{f\sim\mu}\left|\mathscr{G}\left(f-\sum\limits_{q=-k}^kc_q\phi_q(x)\right)\right|^2\right)^{1/2}\le \frac{C}{k},
\end{equation*}
where $C$ depends solely on $l$.
\label{differ_norm}
\end{corollary}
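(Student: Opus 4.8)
The plan is to expand every function in the Karhunen–Lo\`eve/Fourier basis $\{\phi_q(x)=e^{i2\pi qx}\}_{q\in\mathbb{Z}}$ and reduce both inequalities to the coefficient estimates of Lemma~\ref{lem_fourier coff} supplemented by a tail summation. Writing $f=\sum_{q\in\mathbb{Z}}\widehat{f}_q\phi_q$ with $\widehat{f}_q=\sqrt{\lambda_q}\,\xi_q$, one has
\[
f-\sum_{q=-k}^{k}c_q\phi_q=\sum_{|q|\le k}(\widehat{f}_q-c_q)\phi_q+\sum_{|q|>k}\widehat{f}_q\phi_q,
\]
so, since $\{\phi_q\}$ is orthonormal in $L^2([0,1])$, Parseval's identity gives $\bigl\|f-\sum_{q=-k}^{k}c_q\phi_q\bigr\|_{L^2}^2=\sum_{|q|\le k}|\widehat{f}_q-c_q|^2+\sum_{|q|>k}|\widehat{f}_q|^2$. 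First I would take expectations and invoke Lemma~\ref{lem_fourier coff} (noting that $\lambda_{-q}=\lambda_q$ and that the estimates there extend verbatim to negative indices, since $k\le M$ makes the aliasing argument symmetric in $k\mapsto-k$) to get $\mathbb{E}_{f\sim\mu}|\widehat{f}_q-c_q|^2\lesssim m^{-2}e^{-2\pi^2l^2q^2}$ and $\mathbb{E}_{f\sim\mu}|\widehat{f}_q|^2=\lambda_q$, hence
\[
\mathbb{E}_{f\sim\mu}\Bigl\|f-\sum_{q=-k}^{k}c_q\phi_q\Bigr\|_{L^2}^2\lesssim\frac{1}{m^2}\sum_{q\in\mathbb{Z}}e^{-2\pi^2l^2q^2}+\sum_{|q|>k}\lambda_q.
\]

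Next I would dispose of the two terms on the right. The first sum is a finite constant depending only on $l$, and since $k\le M<m$ this term is $\lesssim 1/k^2$. For the tail, on $\{|q|>k\}$ we have $1\le q^2/k^2$, so $\sum_{|q|>k}\lambda_q\le k^{-2}\sum_{q\in\mathbb{Z}}q^2\lambda_q\lesssim 1/k^2$, the remaining sum again being an $l$-dependent constant. Combining, $\mathbb{E}_{f\sim\mu}\bigl\|f-\sum_{q=-k}^{k}c_q\phi_q\bigr\|_{L^2}^2\lesssim 1/k^2$, which yields the first asserted inequality after a square root; if one prefers to read $|\cdot|$ pointwise in $x$, the same bound follows from Minkowski's inequality in $L^2(\mu)$ together with $|\phi_q(x)|\equiv 1$.

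For the second inequality I would use that $\mathscr{G}$ is linear with $\mathscr{G}(0)=0$ and Lipschitz, so that
\[
\Bigl\|\mathscr{G}\Bigl(f-\sum_{q=-k}^{k}c_q\phi_q\Bigr)\Bigr\|_{L^2}\le Lip(\mathscr{G})\,\Bigl\|f-\sum_{q=-k}^{k}c_q\phi_q\Bigr\|_{L^2},
\]
and then square, take expectations, and apply the estimate just obtained. The only mildly delicate points are the bookkeeping that lifts Lemma~\ref{lem_fourier coff} from nonnegative to all integer frequencies, and the comparison $1\le q^2/k^2$ on the tail, which is what converts the super-exponentially small quantity $\sum_{|q|>k}\lambda_q$ into the stated algebraic rate $O(1/k)$ (indeed $O(1/k^2)$ before taking the square root); everything else is a routine use of Parseval's identity and Lipschitz continuity of $\mathscr{G}$.
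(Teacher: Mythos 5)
Your treatment of the first inequality is essentially sound and, modulo packaging, matches the paper: the same decomposition $f-\sum_{|q|\le k}c_q\phi_q=\sum_{|q|\le k}(\widehat{f}_q-c_q)\phi_q+\sum_{|q|>k}\widehat{f}_q\phi_q$, followed by the coefficient bounds of Lemma \ref{lem_fourier coff}. The paper works pointwise in $x$ (Minkowski in $L^2(\mu)$ plus $|\phi_q(x)|\equiv 1$), which is exactly the variant you mention in passing; your main route via Parseval gives the $L^2([0,1])$-in-$x$ version with the same rate, and your tail estimate $\sum_{|q|>k}\lambda_q\lesssim k^{-2}$ is a harmless (indeed wasteful, since the tail is super-exponentially small) but correct way to reach $C/k$. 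Note that the version actually needed downstream is the pointwise one, so the Minkowski remark should be your primary argument rather than an aside.

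For the second inequality, however, there is a genuine gap. You bound $\|\mathscr{G}(f-\sum_{|q|\le k}c_q\phi_q)\|_{L^2([0,1])}$ by $Lip(\mathscr{G})\|f-\sum_{|q|\le k}c_q\phi_q\|_{L^2([0,1])}$, which only controls the $x$-averaged quantity. But the corollary is invoked in the proof of Lemma \ref{deri_bound} (Appendix 7.3) at \emph{fixed} points: one needs $\bigl(\mathbb{E}_{f\sim\mu}|\mathscr{G}(f-\mathscr{D}\circ\mathscr{E}(f))(x)|^2\bigr)^{1/2}\lesssim 1/m$ for each $x\in[0,1]$ (and likewise at the point $s$ inside $h$), since the Cauchy--Schwarz step there is applied to $\mathbb{E}|\mathscr{G}(\cdots)(x)\,h(s)|$ pointwise. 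An $L^2$-in-$x$ bound does not imply this, and Lipschitz continuity of $\mathscr{G}$ as a map $L^2\to L^2$ cannot produce pointwise control. The paper closes this by a different ingredient: by Lemma \ref{lem_bounded} (Kellogg--Tsan) each $\mathscr{G}(\phi_q)$ is bounded in $L^\infty$ uniformly in $q$ and $\varepsilon$, so one expands $\mathscr{G}(f-\sum_{|q|\le k}c_q\phi_q)(x)=\sum_{|q|\le k}(\widehat{f}_q-c_q)\mathscr{G}(\phi_q)(x)+\sum_{|q|>k}\widehat{f}_q\,\mathscr{G}(\phi_q)(x)$ and repeats the same Minkowski-plus-coefficient argument with $|\mathscr{G}(\phi_q)(x)|\lesssim 1$ in place of $|\phi_q(x)|=1$. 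Your proof should be amended accordingly; as written, the second estimate you prove is strictly weaker than the one the paper states and uses.
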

\begin{proof}
To establish the first inequality, we proceed as follows:
    \begin{equation*}
        \begin{aligned}
            \left(\mathbb{E}_{f\sim\mu}\left|f-\sum\limits_{q=-k}^kc_q\phi_q(x)\right|^2\right)^{1/2}&= \left(\mathbb{E}_{f\sim\mu}\left|\sum_{q=-\infty}^{\infty} \widehat{f}_q \phi_q - \sum_{|q|\le k} c_q \phi_q\right|^2\right)^{1/2} \\
            &=\left( \mathbb{E}_{f\sim\mu}\left|\sum_{|q|\le k} (\widehat{f}_q - c_q) \phi_q + \sum_{|q|> k} \widehat{f}_q \phi_q\right|^2\right)^{1/2} \\
            &\le \sum_{|q|\le k}\left(\mathbb{E}_{f\sim\mu}| \widehat{f}_q - c_q|^2\right)^{1/2}  + \sum_{|q|> k} \left(\mathbb{E}_{f\sim\mu}| \widehat{f}_q|^2\right)^{1/2} \\
            &\le \frac{C}{m}\sum_{|q|\le k} e^{-\pi^2 l^2 q^2} + C\sum_{|q|> k}e^{-\pi^2 l^2 q^2} \\
            &\le\frac{C}{k}.
        \end{aligned}
    \end{equation*}
    Based on Lemma \ref{lem_bounded}, it follows that $|\mathscr{G}(\phi_k)|\lesssim 1$. Moreover, by repeating the aforementioned proof, we can derive the second inequality stated in this corollary.
    \qed
\end{proof}

We present our final lemma, the proof of which closely follows that of \cite{kellogg_ssp}. 
\begin{lemma}
Let $u=\mathscr{G}(f)$ be the solution to the equation
\begin{equation*}
    \left\{\begin{array}{l}
        -\varepsilon u''+p(x)u'+q(x)u=f,\quad x\in (0,1),\\
        u(0)=u(1)=0,
    \end{array}\right.
\end{equation*}
and let $h=f-qu$. Then, we have the estimate:
\begin{equation*}
    |u'(1)|\lesssim\frac{1}{\varepsilon^2}\int_0^1\int_t^1e^{-\alpha(s-t)/\varepsilon}|h(s)|dsdt.
\end{equation*}
    \label{u'_estimate}
\end{lemma}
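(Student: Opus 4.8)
The plan is to solve for $u$ via a Green's function / variation-of-parameters representation of the ODE, rewritten in the form $-\varepsilon u'' + p u' = h$ with $h = f - qu$, and then to differentiate the representation at $x = 1$ and bound the resulting kernel. Since $u(0) = u(1) = 0$, the solution can be written as $u(x) = \int_0^1 G(x,t) h(t)\,dt$ for the appropriate Green's function $G$ of the operator $L u = -\varepsilon u'' + p u'$ with homogeneous Dirichlet data. The point is that $G$ is built from the two fundamental solutions of $L w = 0$: one constant solution $w_1 \equiv 1$, and one solution $w_2(x) = \int_0^x \exp\bigl(\tfrac{1}{\varepsilon}\int_0^\xi p(\zeta)\,d\zeta\bigr)\,d\xi$, whose derivative $w_2'(x) = \exp\bigl(\tfrac{1}{\varepsilon}\int_0^x p\bigr)$ grows exponentially. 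The Wronskian-type quantity $\varepsilon (w_1 w_2' - w_1' w_2) = \varepsilon w_2'$, so the Green's function takes the shape $G(x,t) = \varepsilon^{-1} w_2'(t)^{-1}\,(\text{product of } w_1, w_2 \text{ evaluated at } \min, \max)$ divided by $w_2(1)$ to enforce the right boundary condition.

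Next I would differentiate in $x$ and evaluate at $x = 1$. Because $w_1' \equiv 0$, the $x$-derivative of $G$ at $x=1$ picks out only the $w_2$-branch, and one gets schematically
\[
u'(1) = \frac{1}{\varepsilon}\int_0^1 \frac{w_2'(1)\,w_2(t)}{w_2(1)\,w_2'(t)}\,h(t)\,dt
\]
(up to constants and a sign). The key estimates are then: $w_2'(1)/w_2'(t) = \exp\bigl(\tfrac{1}{\varepsilon}\int_t^1 p\bigr) \ge \exp(\alpha(1-t)/\varepsilon)$ using $p \ge \alpha$, which is the \emph{bad} direction, but it is compensated by $w_2(t)/w_2(1)$, which is exponentially small: $w_2(t)/w_2(1) = \int_0^t w_2'/\int_0^1 w_2' \lesssim w_2'(t)/w_2'(1) \cdot (\text{something tame})$. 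More carefully, $w_2(t) = \int_0^t \exp(\tfrac1\varepsilon\int_0^\xi p)\,d\xi \le \tfrac{\varepsilon}{\alpha}\exp(\tfrac1\varepsilon\int_0^t p)$ by comparing against the derivative, and $w_2(1) \ge \int_{1-\varepsilon}^1 \exp(\tfrac1\varepsilon \int_0^\xi p)\,d\xi \gtrsim \varepsilon \exp(\tfrac1\varepsilon\int_0^{1-\varepsilon}p)$, so the ratio $w_2(t)/w_2(1)$ is $\lesssim \tfrac1\alpha \exp\bigl(-\tfrac1\varepsilon\int_t^{1}p + O(1)\bigr)$. Combining, the full kernel in front of $h(t)$ is bounded by $\tfrac{1}{\varepsilon}\cdot\exp\bigl(\tfrac1\varepsilon\int_t^1 p\bigr)\cdot\exp\bigl(-\tfrac1\varepsilon\int_t^1 p\bigr) \cdot (\text{tame})$, and to get the stated $\varepsilon^{-2}$ with the extra $\int_t^1 e^{-\alpha(s-t)/\varepsilon}\,ds$ integration, one keeps one factor of $w_2'(s)$-growth unabsorbed and writes $w_2(t) = \int_0^t w_2'(s)\,ds$, or equivalently $w_2(1) - w_2(t) = \int_t^1 w_2'(s)\,ds$, turning the ratio into an explicit $s$-integral over $[t,1]$ of $\exp(-\tfrac1\varepsilon\int_s^1 p) \le e^{-\alpha(s-t)/\varepsilon}\cdot(\ldots)$; tracking the two powers of $\varepsilon^{-1}$ (one from the Green's function prefactor $\varepsilon^{-1}$, one from estimating $w_2(1)^{-1}$) yields exactly $\varepsilon^{-2}$.

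The main obstacle I expect is the bookkeeping of the competing exponentials: both $w_2'(1)/w_2'(t)$ and $1/w_2(1)$ are exponentially large as $\varepsilon \to 0$, and one must show that $w_2(t)$ (resp. $w_2(1) - w_2(t)$) supplies precisely enough exponential smallness so that only the benign factor $e^{-\alpha(s-t)/\varepsilon}$ and the algebraic factor $\varepsilon^{-2}$ survive. This is where the hypothesis $p \ge \alpha > 0$ is essential — it is what makes $\int_s^1 p \ge \alpha(1-s)$ so that the surviving exponential decays. A secondary technical point is justifying the representation formula itself (existence/uniqueness of $u$ and that it is genuinely $C^2$, so that $u'(1)$ is well-defined and the Green's function identity holds pointwise); this is standard for a smooth second-order two-point boundary value problem with $p, q$ smooth, so I would only remark on it. The variable coefficient $q(x)u$ sits harmlessly inside $h$, which is why the statement is phrased in terms of $h = f - qu$ rather than $f$ directly.
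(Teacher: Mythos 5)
Your proposal is correct and follows essentially the same route as the paper: both arguments solve the ODE by an explicit variation-of-parameters representation built from the fundamental solutions $1$ and $w_2(x)=\int_0^x e^{\frac{1}{\varepsilon}\int_0^{\xi}p}\,d\xi$, absorb $qu$ into the data $h$, and extract $u'(1)$ from the two boundary conditions using $p\ge\alpha$ for the decaying factor $e^{-\alpha(s-t)/\varepsilon}$ and the boundedness of $p$ for the $\varepsilon$-sized lower bound on the homogeneous term (the paper simply parameterizes the general solution by $K_1=u(1)$, $K_2=u'(1)$ and evaluates at $x=0$ instead of writing the Green's function and differentiating it at $x=1$). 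One cosmetic slip in your final bookkeeping: expanding $w_2(t)=\int_0^t w_2'(s)\,ds$ produces an integral over $[0,t]$ with ratio $w_2'(s)/w_2'(t)=e^{-\frac{1}{\varepsilon}\int_s^t p}\le e^{-\alpha(t-s)/\varepsilon}$ (not an integral over $[t,1]$ of $e^{-\frac{1}{\varepsilon}\int_s^1 p}$), and combined with $w_2'(1)/w_2(1)\lesssim \varepsilon^{-1}$ this gives, after Fubini and relabeling, exactly the stated double-integral bound.
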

\begin{proof}
    The function $u$ satisfies
    \begin{equation}
        u(x)=u_p(x)+K_1+K_2\int_x^1e^{-\frac{1}{\varepsilon}\int_t^1p(s)ds}dt,\label{equ_u}
    \end{equation}
    where $K_1=u(1)$, $K_2=u'(1)$, and $u_p(x)=-\int_x^1\int_t^1\frac{1}{\varepsilon}e^{-\frac{1}{\varepsilon}\int_t^sp(w)dw}h(s)dsdt$. By setting $x=0$ in \eqref{equ_u}, we obtain the following result:
    \begin{equation*}
        u(0)=u_p(0)+K_1+K_2\int_0^1e^{-\frac{1}{\varepsilon}\int_t^1p(s)ds}dt.
    \end{equation*}
    Given that $u(0)=u(1)=0$ and $p(x)\ge\alpha>0$ is bounded on $[0,1]$, it follows that:
    \begin{equation*}
        \varepsilon|K_2|\lesssim |K_2|\int_0^1e^{-c(1-t)/\varepsilon}dt\le|u_p(0)|\le \int_0^1\int_t^1\frac{1}{\varepsilon}e^{-\alpha(s-t)/\varepsilon}|h(s)|dsdt,
    \end{equation*}
    which implies that
    \begin{equation*}
        |u'(1)|=|K_2|\lesssim\frac{1}{\varepsilon^2}\int_0^1\int_t^1e^{-\alpha(s-t)/\varepsilon}|h(s)|dsdt.
    \end{equation*}
    \qed
\end{proof}

Using the lemmas established above, we can now provide the proof of Lemma \ref{deri_bound}.

\begin{proof}
    Let $u(x)=\mathscr{G}(f-\mathscr{D}\circ\mathscr{E}(f))(x)$ and $h(x)=f-\mathscr{D}\circ\mathscr{E}(f)-q(x)u$, then $z(x)=du/dx$ satisfies
    \begin{equation*}
        z(x)=z(1)e^{-\frac{1}{\varepsilon}\int_x^1p(s)ds}+\frac{1}{\varepsilon}\int_x^1h(t)e^{-\frac{1}{\varepsilon}\int_x^tp(s)ds}dt.
    \end{equation*}
    Employing Eq.~\eqref{simplify}, Lemma \ref{u'_estimate}, and the fact that $p(x)\ge \alpha>0$, yields the following result:
\begin{align}
&\mathbb{E}_{f\sim\mu}\left|\left(\mathscr{G}(f)(x)-\widetilde{\mathscr{N}}(f)(x)\right)\frac{d}{dx}\left(\mathscr{G}(f)(x)-\widetilde{\mathscr{N}}(f)(x)\right)\right|\nonumber\\
=&\mathbb{E}_{f\sim\mu}\left|\mathscr{G}(f-\mathscr{D}\circ\mathscr{E}(f))(x)\left[z(1)e^{-\frac{1}{\varepsilon}\int_x^1p(s)ds}+\frac{1}{\varepsilon}\int_x^1h(t)e^{-\frac{1}{\varepsilon}\int_x^tp(s)ds}dt\right]\right|\nonumber\\
\le& e^{-\frac{1}{\varepsilon}\int_x^1p(s)ds}\mathbb{E}_{f\sim\mu}\left|\mathscr{G}(f-\mathscr{D}\circ\mathscr{E}(f))(x)z(1)\right|+\frac{1}{\varepsilon}\int_x^1e^{-\frac{1}{\varepsilon}\int_x^tp(s)ds}\mathbb{E}_{f\sim\mu}\left|\mathscr{G}(f-\mathscr{D}\circ\mathscr{E}(f))(x)h(t)\right|dt\nonumber\\
\lesssim&\frac{1}{\varepsilon^2}e^{-\alpha(1-x)/\varepsilon}\int_0^1\int_t^1e^{-\alpha(s-t)/\varepsilon}\mathbb{E}_{f\sim\mu}\left|\mathscr{G}(f-\mathscr{D}\circ\mathscr{E}(f))(x)h(s)\right|dsdt\nonumber\\
&+\frac{1}{\varepsilon}\int_x^1e^{-\alpha(t-x)/\varepsilon}\mathbb{E}_{f\sim\mu}\left|\mathscr{G}(f-\mathscr{D}\circ\mathscr{E}(f))(x)h(t)\right|dt.\label{noname}
\end{align}
Substituting the expression for $h(x)$ into $\mathbb{E}_{f\sim\mu}\left|\mathscr{G}(f-\mathscr{D}\circ\mathscr{E}(f))(x)h(s)\right|$, and using the fact that $q(x)$ is bounded on $[0,1]$ along with Corollary \ref{differ_norm}, we have the inequality
\begin{equation*}
    \mathbb{E}_{f\sim\mu}\left|\mathscr{G}(f-\mathscr{D}\circ\mathscr{E}(f))(x)h(s)\right|\le \frac{C}{m^2}.
\end{equation*}
     After substituting this equation into (\ref{noname}), the required inequality can be obtained via a straightforward integration calculation.
     \qed
\end{proof}
\subsection{Examples of Pairs $(F_i,u_i)$ in the Loss Function \eqref{loss}}
\label{appendix 7.4}
\begin{figure}[!tbh]
\centering
\begin{minipage}{0.4\textwidth}
\centering
\includegraphics[width=\textwidth]{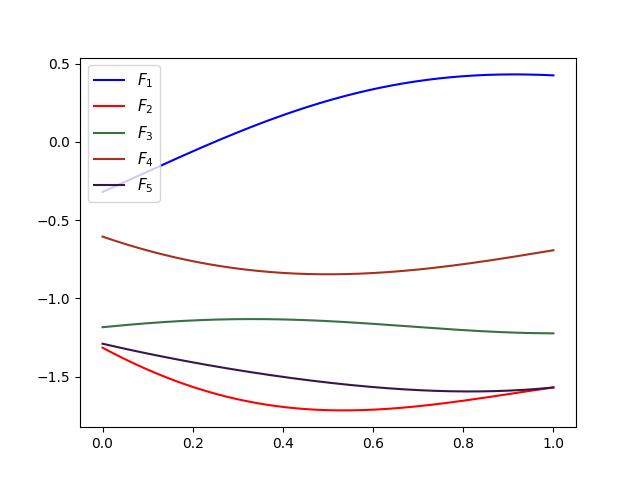}\\
\scriptsize{a)}
\end{minipage}
\begin{minipage}{0.4\textwidth}
\centering
\includegraphics[width=\textwidth]{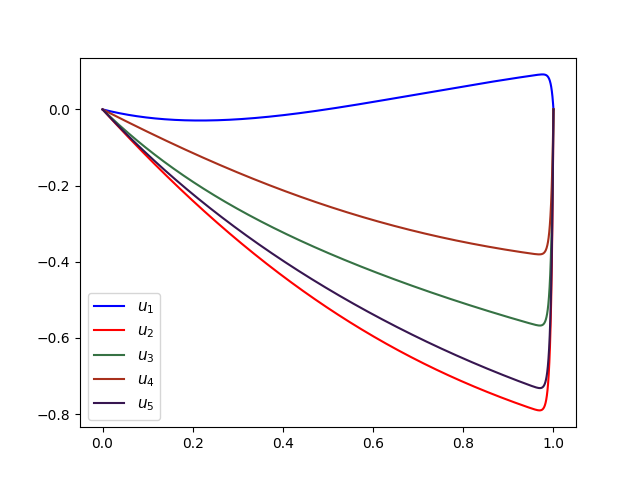}\\
\scriptsize{b)}
\end{minipage}
  \caption{a): Examples of random process $F_i\sim\mu$ with parameter $l=1$, generating by matrix decomposition method \cite{GRF_generation}.
b): Numerical solutions to Eq.~\eqref{equ} with $p(x)=x+1$, $q(x)=1$, $\varepsilon=0.01$, and right-hand side $f=F_i$ as presented in a).}
 \label{fig8}
\end{figure}
\subsection{Lipschitz Continuity of the Operator in Example \ref{exam_1}\label{appendix 7.5}}
\begin{lemma}
Let $\mathscr{G}: f\mapsto u$ denote the solution operator for the following boundary value problem: 
\begin{equation*}
    \left\{ \begin{array}{l}
-\varepsilon u''(x)+u'(x)=f(x),\quad x\in(0,1),\\
u(0)=u(1)=0,
\end{array} \right.
\end{equation*}
if $0<\varepsilon\le 1/2$, then the operator $\mathscr{G}$ is Lipschitz continuous.
\end{lemma}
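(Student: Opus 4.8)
The plan is to exploit the linearity of the boundary value problem. Since the map $f\mapsto u$ is linear, Lipschitz continuity of $\mathscr{G}$ on $L^2([0,1])$ is equivalent to boundedness, so it suffices to prove an a priori bound $\|\mathscr{G}(f)\|_{L^2}\le C\|f\|_{L^2}$; in fact I would produce a constant $C$ that does not depend on $\varepsilon$ (the hypothesis $0<\varepsilon\le 1/2$ is merely the standing assumption under which the Shishkin transition point is defined), which is what the $\varepsilon$-uniform statements of Sections \ref{sec_approximation}--\ref{sec_generailzation} require in this example.

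The first step is to reduce the second-order equation to a first-order one. Writing $w:=u-\varepsilon u'$, the identity $-\varepsilon u''+u'=f$ says precisely that $w'=f$, hence $w(x)=c+F(x)$ where $F(x):=\int_0^x f(t)\,dt$ and $c:=w(0)=-\varepsilon u'(0)$. Thus $u$ solves the first-order linear ODE $u-\varepsilon u'=c+F$ with $u(0)=0$; multiplying by the integrating factor $e^{-x/\varepsilon}$ and integrating yields the closed form $u(x)=-\tfrac1\varepsilon e^{x/\varepsilon}\int_0^x e^{-t/\varepsilon}\bigl(c+F(t)\bigr)\,dt$. The remaining boundary condition $u(1)=0$ forces $\int_0^1 e^{-t/\varepsilon}\bigl(c+F(t)\bigr)\,dt=0$, i.e. $c=-\bigl(\int_0^1 e^{-t/\varepsilon}F(t)\,dt\bigr)\big/\bigl(\int_0^1 e^{-t/\varepsilon}\,dt\bigr)$, which exhibits $c$ as the $e^{-t/\varepsilon}$-weighted average of $-F$ over $[0,1]$. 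Using this constraint to replace $\int_0^x e^{-t/\varepsilon}(\cdot)$ by $-\int_x^1 e^{-t/\varepsilon}(\cdot)$ gives the equivalent representation $u(x)=\tfrac1\varepsilon e^{x/\varepsilon}\int_x^1 e^{-t/\varepsilon}\bigl(c+F(t)\bigr)\,dt$.

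The estimate is then immediate. Since $|F(x)|\le\|f\|_{L^1([0,1])}\le\|f\|_{L^2([0,1])}$ and $c$ is a convex combination of values of $-F$, we get $|c|\le\|F\|_{L^\infty}\le\|f\|_{L^2}$, hence $\|c+F\|_{L^\infty}\le 2\|f\|_{L^2}$. Plugging into the second representation and using $\tfrac1\varepsilon e^{x/\varepsilon}\int_x^1 e^{-t/\varepsilon}\,dt=1-e^{-(1-x)/\varepsilon}\in[0,1]$ for all $x\in[0,1]$ yields $\|u\|_{L^\infty}\le\|c+F\|_{L^\infty}\le 2\|f\|_{L^2}$, so $\|\mathscr{G}(f)\|_{L^2}\le\|u\|_{L^\infty}\le 2\|f\|_{L^2}$ and $Lip(\mathscr{G})\le 2$, uniformly in $\varepsilon$.

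The only delicate point -- and really the sole obstacle -- is that the estimate must be carried out on the representation $u(x)=\tfrac1\varepsilon e^{x/\varepsilon}\int_x^1 e^{-t/\varepsilon}(\cdot)$ and not directly on $u(x)=-\tfrac1\varepsilon e^{x/\varepsilon}\int_0^x e^{-t/\varepsilon}(\cdot)$: a crude bound on the latter produces a spurious factor $e^{x/\varepsilon}-1$, and it is exactly the cancellation encoded in $\int_0^1 e^{-t/\varepsilon}(c+F)\,dt=0$ that tames it (equivalently, this is what places the boundary layer at $x=1$ rather than at $x=0$). As an alternative route, one could argue through the Green's function $G(x,\xi)\ge 0$ of the problem together with Schur's test, using that $\int_0^1 G(x,\xi)\,d\xi$ equals the solution with right-hand side $f\equiv 1$, which lies in $[0,1]$ by the maximum principle, and that $\int_0^1 G(x,\xi)\,dx\in[0,1]$ by the reflection $x\mapsto 1-x$; but the first-order reduction above is shorter and self-contained.
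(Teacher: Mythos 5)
Your proposal is correct, but it follows a genuinely different route from the paper. The paper stays within the weak formulation \eqref{equal_equ} and uses a weighted energy argument: substituting $v=e^{-x}u$ yields
\begin{equation*}
\int_0^1\varepsilon (u')^2e^{-x}dx+\frac{1-\varepsilon}{2}\int_0^1e^{-x}u^2dx=\int_0^1e^{-x}ufdx,
\end{equation*}
and then Cauchy--Schwarz gives $\|u\|_{L^2}\lesssim\|f\|_{L^2}$ with a constant like $2e/(1-\varepsilon)$, which is where the hypothesis $\varepsilon\le 1/2$ is actually used (the exponential weight is needed precisely because $q-\tfrac{p'}{2}=0$ here, so Lemma~\ref{lem_lip} does not apply directly). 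You instead exploit the constant coefficients to integrate explicitly: the first-order reduction $w=u-\varepsilon u'$, the representation $u(x)=\tfrac1\varepsilon e^{x/\varepsilon}\int_x^1 e^{-t/\varepsilon}\bigl(c+F(t)\bigr)dt$ after enforcing $u(1)=0$, and the identity $\tfrac1\varepsilon e^{x/\varepsilon}\int_x^1 e^{-t/\varepsilon}dt=1-e^{-(1-x)/\varepsilon}\le 1$ are all correct, and your observation about which representation to estimate (the cancellation from $\int_0^1 e^{-t/\varepsilon}(c+F)\,dt=0$ placing the layer at $x=1$) is exactly the right delicate point. What your route buys is a stronger and cleaner conclusion: an $L^\infty$ bound $\|u\|_{L^\infty}\le 2\|f\|_{L^2}$, hence $Lip(\mathscr{G})\le 2$ uniformly in \emph{all} $\varepsilon>0$, with no need for $\varepsilon\le 1/2$. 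What it gives up is generality: the explicit integration is tied to constant coefficients, whereas the paper's weighted energy trick extends with little change to variable $p,q$ (it is the same mechanism as Lemma~\ref{lem_lip}). One small point worth making explicit if you write this up: $\mathscr{G}$ is defined through the weak formulation, so you should note that for $f\in L^2$ the weak solution lies in $H^2$ and satisfies the ODE a.e., which justifies the pointwise reduction $w'=f$; this is standard but deserves a sentence.
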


\begin{proof}
As $\mathscr{G}$ is linear, it suffices to show that $\|u\|_{L^2}\lesssim\|f\|_{L^2}$. To achieve this, we substitute $v=e^{-x}u$ into (\ref{equal_equ}). Utilizing $uu'=\frac{1}{2}(u^2)'$ and $u(0)=u(1)=0$, we obtain:
\begin{equation*}
    \int_0^1\varepsilon (u')^2e^{-x}dx+\frac{1-\varepsilon}{2}\int_0^1e^{-x}u^2dx=\int_0^1e^{-x}ufdx.\label{7.4.1}
\end{equation*}
Next, we obtain the inequality
\begin{equation*}
    \frac{1-\varepsilon}{2e}\int_0^1u^2dx\le\|u\|_{L^2}\|f\|_{L^2}.
\end{equation*}
Notably, since $0<\varepsilon\ll1$, we can reasonably posit that $\varepsilon\le1/2$ and thereby complete the proof of the lemma.
\qed
\end{proof}

\end{document}